%
%

\documentclass{amsart}

\usepackage{amssymb}
\usepackage{xspace}
\usepackage{xcolor}
\usepackage[pdftex,backref]{hyperref}
\usepackage[all]{xy}
\usepackage[utf8x]{inputenc}
\usepackage{paralist}
\usepackage{tikz}


\newcommand{\ie}{i.e.,\xspace}

%
 \newcommand{\jq}{\ensuremath{\mathsf{j}}}
\newcommand{\C}{\mathbb{C}}
\newcommand{\R}{\mathbb{R}}
\newcommand{\Z}{\mathbb{Z}}

\newcommand{\Sym}{\mathrm{Sym}}
\newcommand{\Xsp}{\bar{X}_{(P)}}
\newcommand{\Xspb}{\bar{X}_{(\bar{P})}}

\newcommand{\V}{\mathbf{v}}
%
\newcommand{\fami}{\mathfrak{F}}
\newcommand{\EGF}[2][\fami]{\mathit{E}_{#1}(#2)}
\newcommand{\BGF}[2][\fami]{\mathit{B}_{#1}(#2)}
\newcommand{\Xfami}[1][\fami]{X_{#1}}
%
\newcommand{\Xsubgr}[1]{X_{(#1)}}
\newcommand{\CC}[2][*]{\mathbf{#2}_{#1}}

%
\newcommand{\SL}[2][2]{\ensuremath{{SL}_{#1}(#2)}}
\newcommand{\PSL}[2][2]{\ensuremath{{{PSL}_{#1}(#2)}}}

%
%
%
\newcommand{\hyp}[1][3]{\ensuremath{\mathbb{H}^{#1}}}
\newcommand{\hypc}[1][3]{\overline{\mathbb{H}}^{#1}}
\newcommand{\Mo}{\ensuremath{M_{0}}}
\newcommand{\Mp}{\ensuremath{M_{+}}}
%
%
\newcommand{\dsubgr}{\ensuremath{\Gamma}}
%
%
\newcommand{\hg}[2][1]{\ensuremath{\pi_{#1}(#2)}}
\newcommand{\homo}[1][\bullet]{\ensuremath{H_{#1}}}
%
%
 \newcommand{\EMc}[2]{\ensuremath{\mathbf{K}(#1,#2)}}
%
%
\newcommand{\pB}[1]{\ensuremath{\mathcal{P}(#1)}}
\newcommand{\Bg}[1]{\mathcal{B}(#1)}
\newcommand{\EpB}[1]{\widehat{\mathcal{P}}(#1)}
\newcommand{\EBg}[1]{\widehat{\mathcal{B}}(#1)}


 \newcommand{\antp}[1]{\ensuremath{\wedge_\Z^2(#1)}}
 \newcommand{\Asym}[2]{\ensuremath{{#1}\wedge{#2}}}
%
%
\newcommand{\fcPSL}[1]{[#1]_{\mathrm{PSL}}} 
%
%
%
%

%
%
 \newcommand{\suce}[2][n]{\ensuremath{{#2_1,\ldots,#2_{#1}}}}
%
\newcommand{\sets}[2]{\ensuremath{\{\,{#1}\mid {#2}\,\}}}
\newcommand{\dH}[2]{\langle\,#1,#2\,\rangle}

\newcommand{\co}{\colon\thinspace}

%

\DeclareMathOperator{\Log}{Log}
\DeclareMathOperator{\Arg}{Arg}
\DeclareMathOperator{\Vol}{Vol}
\DeclareMathOperator{\CS}{CS}

\newtheorem{theorem}{Theorem}[section]
\newtheorem{lemma}[theorem]{Lemma}
\newtheorem{proposition}[theorem]{Proposition}
\newtheorem{corollary}[theorem]{Corollary}

\theoremstyle{definition}

\theoremstyle{definition}
\newtheorem{remark}[theorem]{Remark}

\numberwithin{equation}{section}

\begin{document}

\setcounter{tocdepth}{1}
\title{Invariants of hyperbolic $3$-manifolds in relative group homology}


\author{Jos\'e Antonio Arciniega-Nev\'arez}
\address{División de Ingenierías, Campus Guanajuato, Universidad de Guanajuato\\
Av. Juárez No. 77, Zona Centro, Guanajuato, Gto., México, C.P. 36000}
\email{ja.arciniega@ugto.mx}

\author{Jos\'e Luis Cisneros-Molina}
\address{Instituto de Matem\'aticas, Unidad Cuernavaca\\ Universidad Nacional
Autonoma
de M\'exico\\ Avenida  Universidad sin n\'umero, Colonia Lomas de Chamilpa\\
Cuernavaca,  Morelos, M\'exico}
\email{jlcisneros@im.unam.mx}

\subjclass[2000]{Primary 57M27, 55R35, 18H10; Secondary 58J28}

\keywords{Hyperbolic manifolds, classifying spaces for families of subgroups,
Adamson relative group homology, Takasu relative group homology, extended
Bloch group, boundary-parabolic representation, complex volume}



\begin{abstract}
Let $M$ be a complete oriented hyperbolic $3$--manifold of finite volume. Using
classifying spaces for families of subgroups we construct a class $\beta_P(M)$
in the Adamson relative homology group $H_3([\PSL{\C}:\bar{P}];\Z)$, where
$\bar{P}$ is the subgroup of parabolic transformations which fix $\infty$ in the
Riemann sphere. We also prove that the classes $F(M)$ in the Takasu relative homology groups
$H_3(\PSL{\C},\bar{P};\Z)$ constructed by Zickert in \cite{Zickert:VCSIR}, which are not 
well-defined and depend of a choice of decorations by horospheres are all mapped to $\beta_P(M)$
via a canonical comparison homomorphism $H_3(\PSL{\C},\bar{P};\Z)\to H_3([\PSL{\C}:\bar{P}];\Z)$.
To do this, we simplify the construction of the classes $F(M)$ using a simpler complex which
computes $H_3(\PSL{\C},\bar{P};\Z)$, getting a simple simplicial formula for $F(M)$, which
in turn gives a simpler and more efficient formula to compute the volume and Chern--Simons invariant
than the one given in \cite{Zickert:VCSIR}. The constructions can be extended for any boundary-parabolic 
$\PSL{\C}$-representation.
\end{abstract}

\maketitle

\tableofcontents

\section{Introduction}

Let $G$ be a discrete group and $H$ a subgroup of $G$. In the literature there are two 
versions of \emph{relative homology groups} for the pair $(G,H)$: the first one, denoted by 
$H_n([G:H];\Z)$, was defined by Adamson in \cite{Adamson:CTNNSNNF}, and the second one,
denoted by $H_n(G,H;\Z)$ was defined by Takasu in \cite{Takasu:RHRCTG}. 
In general they do not coincide \cite{Arciniega-Cisneros:CRGCH}.

Given a compact oriented hyperbolic $3$--manifold $M$ there is a canonical
representation $\bar{\rho}\co\hg{M}=\dsubgr\to\PSL{\C}$. To this representation corresponds a
map $B\bar{\rho}\co M\to B\PSL{\C}$ where $B\PSL{\C}$ is the classifying space
of $\PSL{\C}$ considered as a discrete group. There is a
well-known invariant $\fcPSL{M}$ of $M$ in the group $H_3(\PSL{\C};\Z)$ given by
the image of the
fundamental class of $M$ under the homomorphism induced in homology by
$B\bar{\rho}$.

In the case when $M$ has cusps, a construction of $\fcPSL{M}$ was first
described by Neumann in \cite[\S14]{Neumann:EBGCCSC}.
Consider the Takasu relative group
$H_3(\PSL{\C},\bar{P};\Z)$ where $\bar{P}$ is the
image in $\PSL{\C}$ of the subgroup $P$ of $\SL{\C}$ of matrices of the form 
$\bigl(\begin{smallmatrix}
\pm1 & b\\                                                                      
0  & \pm1
\end{smallmatrix}\bigr)$. 
The long exact sequence of Takasu relative group homology \cite[Proposition~2.1]{Takasu:RHRCTG}
simplifies to short exact sequences (see \cite[Proposition~14.3]{Neumann:EBGCCSC}), in particular
\begin{equation*}
 0\to H_3(\PSL{\C};\Z)\to H_3(\PSL{\C},\bar{P};\Z)\to H_2(\bar{P};\Z)\to0,
\end{equation*}
and there is a natural splitting
\begin{equation}\label{eq:Psi}
\Psi\colon H_3(\PSL{\C},\bar{P};\Z)\to H_3(\PSL{\C};\Z).
\end{equation}
The hyperbolic $3$-manifold $M$ can be compactified to a manifold $\bar{M}$ with $\partial\bar{M}$ consisting of tori. The fundamental class $[\bar{M},\partial\bar{M}]\in H_3(\bar{M},\partial\bar{M};\Z)$
determines a class in $H_3(\PSL{\C},\bar{P};\Z)$, and its image under the homomorphism $\Psi$ gives $\fcPSL{M}$.
This was proved in detail by Zickert in \cite{Zickert:VCSIR}, where a complex of truncated simplices
with labelings on its vertices by elements of $\PSL{\C}$ is used to compute Takasu relative homology groups $H_n(\PSL{\C},\bar{P};\Z)$, and using
the fact that  $H_3(\PSL{\C};\Z)$ is isomorphic to the extended Bloch group $\EBg{\C}$ defined by Neumann~\cite[Theorem~2.6]{Neumann:EBGCCSC}
the homomorphism \eqref{eq:Psi} is constructed. 
Suppose that the complete oriented non-compact hyperbolic $3$--manifold of finite volume $M$ (\ie with
cusps) is triangulated. From the triangulation and using the complex of truncated simplices, Zickert constructs a class $F(M)$ in $H_3(\PSL{\C},\bar{P};\Z)$,
which is not well-defined, it depends on choices of horoballs at the cusps, but the image $\fcPSL{M}=\Psi(F(M)\in H_3(\PSL{\C};\Z)$ is independent of these choices \cite[Theorem~6.10]{Zickert:VCSIR}.

In the present article we generalize the construction given by Cisneros-Molina and Jones 
in \cite{Cisneros-Jones:Bloch} using classifying spaces for families of isotropy subgroups 
to construct a well-defined class $\beta_P(M)$ in the Adamson relative homology group $H_3([\PSL{\C}:\bar{P}];\Z)$.
A family $\fami$ of subgroups of a discrete group $G$ is a set of subgroups of
$G$ which is closed under conjugation and subgroups. A classifying space
$\EGF{G}$ for the family $\fami$ is a terminal object in the category of
$G$--sets with isotropy subgroups in the family $\fami$. We consider the case
$G=\PSL{\C}$ and the family $\fami(\bar{P})$ of subgroups generated by the
subgroup $\bar{P}$. One can compute Adamson relative group homology as
$H_3([\PSL{\C}:\bar{P}];\Z)=H_3(\BGF[\fami(\bar{P})]{G};\Z)$ where 
$\BGF[\fami(\bar{P})]{G}$ is the orbit space of $\EGF[\fami(\bar{P})]{G}$.
Using general properties of classifying spaces for families of isotropy
subgroups and the canonical representation $\bar{\rho}\co\hg{M}\to\PSL{\C}$ we
construct a canonical map $\widehat{\psi}_P\co\widehat{M}\to
\BGF[\fami(\bar{P})]{G}$ where $\widehat{M}$ is the end compactification of $M$.
We have that $H_3(\widehat{M};\Z)\cong\Z$ and the image of the generator under
the homomorphism $(\widehat{\psi}_P)_*\co H_3(\widehat{M};\Z)\to
H_3(\BGF[\fami(\bar{P})]{G};\Z)=H_3([\PSL{\C}:\bar{P}];\Z)$ gives a well defined element $\beta_P(M)\in
H_3([\PSL{\C}:\bar{P}];\Z)$. 

There is a canonical comparison homomorphism between Takasu and Adamson relative group homologies 
\begin{equation}\label{eq:comp.hom}
 \lambda_3\colon H_3(\PSL{\C},\bar{P};\Z)\to H_3([\PSL{\C}:\bar{P}];\Z).
\end{equation}
We prove that under $\lambda_3$ the class $F(M)$ is mapped to $\beta_P(M)$, independently of
the choices made to define $F(M)$, in other words, the preimage $\lambda_3^{-1}(\beta_P(M))$ consists of the classes $F(M)$ 
varying the choices of horoballs. To prove this, first we simplify the construction of $F(M)$ in \cite{Zickert:VCSIR},
instead of using the complex of truncated simplices with labelings on its vertices by elements of $\PSL{\C}$, we use a simpler complex, called the $h^K_H$-subcomplex, which computes the groups
$H_n(\PSL{\C},\bar{P};\Z)$ \cite[Theorem~6.4]{Arciniega-Cisneros:CRGCH} and we prove that the two complexes are isomorphic.
Also following \cite{Dupont-Zickert:DFCCSC} we write explicitly homomorphism \eqref{eq:Psi} in this context. 

The construction of $\beta_P(M)$ has some advantages:
\begin{enumerate}
 \item It does not require a triangulation of $M$.
 \item It shows that the class $\beta_P(M)$ obtained in the Adamson
relative homology group $H_3([\PSL{\C}:\bar{P}];\Z)$ is a well-defined invariant of $M$
which lifts the classical Bloch invariant $\beta(M)$.
 \item Choosing an ideal triangulation of $M$ one can give a explicit formula
for the class $\beta_P(M)$.
\item Using $\beta_P(M)$ is possible to define the fundamental class $\fcPSL{M}$ without a triangulation of $M$
by choosing a preimage $F(M)\in\lambda_3^{-1}(\beta_P(M))$ and defining $\fcPSL{M}=\Psi(F(M))$.
\end{enumerate}

Our construction also works in the general context of $(G,H)$--representations
of tame $n$--manifolds considered in \cite{Zickert:VCSIR}. In
Section~\ref{sec:G.H.rep} we prove that a $G$--representation mapping peripheral subgroups 
to conjugates of a fixed subgroup $H$ gives a well-defined class in the
Adamson relative homology group $H_n([G:H];\Z)$. Since in general Adamson and 
Takasu relative homology groups do not necessarily coincide, the
class in $H_n(G,H;\Z)$ constructed by Zickert in \cite[\S5]{Zickert:VCSIR}
depends of a choice of decoration and different classes given by different
choices are all mapped to the class in $H_n([G:H];\Z)$ by a canonical
homomorphism $H_n(G,H;\Z)\to H_n([G:H];\Z)$. So in general it is
more appropriate to use Adamson relative group homology than Takasu relative
group homology because we obtain classes independent of choice.

The article is organized as follows. In Section~\ref{sec:rgh} we recall the definitions and basic properties of Adamson and Takasu relative group homology theories, in particular, 
the topological definition of Adamson relative group homology using classifying spaces for families of subgroups, and the so-called $h^K_H$-subcomplex  
and conditions for which it computes Takasu relative group homology \cite[Theorem~6.4]{Arciniega-Cisneros:CRGCH}. In Section~\ref{sec:GUPB} we give explicit models
for some homogeneous spaces of $\SL{\C}$ and $\PSL{\C}$, explicit equivariant maps between them and we prove that the $h^K_H$-subcomplex given in Section~\ref{sec:rgh} computes $H_n([\PSL{\C}:\bar{P}];\Z)$. 
In Section~\ref{sec:inv} we define the invariants $\beta_B(M)$ and $\beta_P(M)$ of an orientable complete finite volume hyperbolic $3$-manifold in Adamson relative group homology. In Section~\ref{sec:bloch} we recall the definition of the Bloch group and extended Bloch group. In Section~\ref{sec:compute} 
using an ideal triangulation of $M$ we give formulas for the invariants $\beta_B(M)$ and $\beta_P(M)$.
In Section~\ref{sec:maps}, using the models of homogeneous spaces given in Section~\ref{sec:GUPB} we give two versions of the $h^K_H$-subcomplex and following Dupont--Zickert \cite{Dupont-Zickert:DFCCSC} we 
construct a homomorphism $\widehat{\sigma}\colon H_3(\PSL{\C},\bar{P};\Z)\to\EBg{\C}$.
In Section~\ref{sec:zrc} we prove that the complex of truncated simplices and the $h^K_H$-subcomplex are isomorphic and that under this isomorphism, the homomorphism $\widehat{\sigma}$ 
corresponds to the homomorphism $\Psi$ in \eqref{eq:Psi} given by Zickert using the complex of truncated simplices. Then, we express Zickert classes $F(M)$ in terms of the $h^K_H$-subcomplex to prove 
that all the clases $F(M)$ are mapped to the invariant $\beta_P(M)$ under the comparison homomorphism \eqref{eq:comp.hom}. In Section~\ref{sec:G.H.rep} we extend our construction for $(G,H)$--representations of tame manifolds, 
in particular for boundary-parabolic representations and we define its $PSL$-fundamental class $\fcPSL{\rho}\in H_3(\PSL{\C};\Z)$. 
In Section~\ref{sec:volume} we recall Zickert's definition of the complex volume of a boundary-parabolic representation $\rho$ and using a fomula for the class $\fcPSL{\rho}$ obtained using the $h^K_H$-subcomplex 
to compute $H_3(\PSL{\C},\bar{P};\Z)$ and the corresponding version $\widehat{\sigma}$ of homomorphism \eqref{eq:Psi}, we give an effective formula to compute the complex volume of $\rho$, in particular when $\rho$ is 
the geometric representation we get efficient formulas for the volume and Chern-Simons invariant of a complete oriented hyperbolic $3$-manifold of finite volume.

\subsection*{Remarks about this second version of the article}

There are some important diferences in this version of the article in comparison with the first one. The main difference is that in the first version, the proof of Proposition~3.20 is
not correct (and probably also the statement). The main implication of this proposition was that for the pair $(\PSL{\C},\bar{P})$ Adamson and Takasu relative homology groups coincide and that
the relative class $F(M)$ constructed by Zickert coincide with the invariant $\beta_P(M)$. This contradicted the remark made by Zickert \cite[Remark~5.19]{Zickert:VCSIR} that the class $F(M)$ is not
well-defined and depends on the choice of decoration on the truncated simplices since the invariant $\beta_P(M)$ is well-defined. In this second version we prove that the different classes $F(M)$
obtained by diferent choices of decorations map to $\beta_P(M)$ via the comparison homomorphism as was mentioned in the introduction.

Following, we briefly list the changes made in this second version. The results on Sections~2 and 3 of the first version were published in the article \cite{Arciniega-Cisneros:CRGCH} and Section~\ref{sec:rgh} of this version
was rewriten and only contains a summary of the results that we need, making reference for the proofs to \cite{Arciniega-Cisneros:CRGCH}. In this version we call Adamson relative group homology to what we called 
Hochschild relative group homology in the first version, since was Adamson who defined the chain complex which gives the theory and Hochschild only interpreted it in terms of relative homological algebra, also to avoid confusion with Hochschild homology
for associative algebras. Section~4 (now Section~\ref{sec:GUPB}) basically was unchanged, 
only Corollary~4.16 was removed becase was implied by Proposition~3.20. Section~5 and 6 (now Sections~\ref{sec:inv} and \ref{sec:bloch}) did not change, except for minor corrections and we added Theorem~\ref{thm:EBG.HPSL}.
Sections~7 and 8 were permuted (now Sections~ \ref{sec:maps} and \ref{sec:compute} respectively) and Subsection~7.3 was removed. In Section~8 (now Section~\ref{sec:zrc}) Subsection~\ref{ssec:fun.class} 
was rewritten. Section~10 (now Section~\ref{sec:G.H.rep}) was unchanged and finally Section~11 (now Section~\ref{sec:volume}) was rewritten.

Recently there has been some interest in this work \cite{Nosaka:QTP,Nosaka:OFR3CKGR}. 
We apologize for the long delay to upload this corrected version of the article to the ArXiv and for any inconvinience that this may have caused. We also are very grateful to all the people interested in this work.

\section{Relative group homologies}\label{sec:rgh}

Let $G$ be a discrete group and consider $\Z$ as a trivial $G$--module. It is well known that the homology groups $\homo(G;\Z)$ of $G$ can be defined topologically, as the homology groups of the classifying space $BG$ of $G$ with coefficients in $\Z$ (see \cite[Proposition~II (4.1)]{Brown:CohoGr}, or algebraically, as the homology groups of the \emph{standard free $G$--resolution} of $\Z$, tensored with $\Z$; or more generally as the groups $\mathrm{Tor}^G_n(\Z,\Z)$ (see \cite[III \S2]{Brown:CohoGr}).

Now consider a subgroup $H$ of $G$. 
In the literature there are two versions of \emph{relative homology groups} for the pair $(G,H)$. The first one, defined by Adamson in \cite{Adamson:CTNNSNNF}, generalises in a natural way the algebraic definition, while the second one, defined by Takasu in \cite{Takasu:RHRCTG}, generalises in a natural way the topological definition.

In this section we recall the definitions of Adamson and Takasu relative group homologies and some of their properties that we will use in the sequel. Here we mainly follow \cite{Arciniega-Cisneros:CRGCH}.

\subsection{Homology of permutation representations}

A \emph{permutation representation} is a pair $(G,X)$ where $G$ is a group and $X$ a $G$--set.
For any $G$--set $X$ we construct a complex $(\CC{C}(X),\partial_*)$ of abelian
groups, where $\CC[n]{C}(X)$ is the free abelian group generated by the
ordered $(n+1)$--tuples of elements of $X$. 
Define the $i$--th face homomorphism $d_i\co \CC[n]{C}(X)\to\CC[n-1]{C}(X)$ by
$d_i(x_0,\dots,x_n)=(x_0,\dots,\widehat{x_i},\dots,x_n)$, 
where $\widehat{x_i}$ denotes deletion. The boundary homomorphism
$\partial_n\co \CC[n]{C}(X)\to\CC[n-1]{C}(X)$ is given by $\partial_n=\sum_{i=0}^n (-1)^id_i$.
The standard computation shows that $\partial_n\circ\partial_{n+1}=0$
proving that $\CC[n]{C}(X)$ is indeed a complex. Define $\CC[-1]{C}(X)=\Z$ as
the infinite cyclic group generated by $(\ )$ and define $\partial_0(x)=(\ )$
for any $x\in X$. Notice that this extended complex is precisely the augmented
complex
\begin{equation*}
 \CC[n]{C}(X)\to\dots\to \CC[2]{C}(X)
\xrightarrow{\partial_2}\CC[1]{C}(X)\xrightarrow{\partial_1}\CC[0]{C}
(X)\xrightarrow{\varepsilon}\Z\to0,
\end{equation*}
with $\varepsilon=\partial_0$ the augmentation homomorphism. 

\begin{remark}\label{rem:resol}
This complex is acyclic \cite[Proposition~3.2]{Arciniega-Cisneros:CRGCH}, hence it is a
$G$--resolution of the trivial $G$--module $\Z$.
\end{remark}

The action of $G$ on $X$ induces an action of $G$ on $\CC[n]{C}(X)$ with
$n\geq0$ given by
\begin{equation*}
 g\cdot(x_0,\dots,x_n)=(g\cdot x_0,\dots,g\cdot x_n)
\end{equation*}
which endows $\CC[n]{C}(X)$ with the structure of a $G$--module. We also let $G$
act on $\CC[-1]{C}(X)=\Z$ trivially.
Denote by $(\CC{B}(X),\partial_*\otimes \mathrm{id}_\Z)$ the complex given by
\begin{equation}\label{eq:B(X)}
 \CC{B}(X)=\CC{C}(X)\otimes_{\Z[G]}\Z.
\end{equation}
We see $\CC{C}(X)$ as a right $G$--module by defining
\begin{equation*}
 (x_0,\dots,x_n)\cdot g:= g^{-1}\cdot(x_0,\dots,x_n).
\end{equation*}
The groups
\begin{equation*}
 H_n(G,X;\Z)=H_n(\CC{B}(X)),
\end{equation*}
are the \emph{homology groups of the permutation representation $(G,X)$}. The
corresponding cohomology groups were defined by Snapper in
\cite{Snapper:CPRISS}.

\begin{remark}
 If $X$ is a free $G$--set, in particular $X=G$ acting
on itself by left multiplication, the complex
$\CC{B}(G)$ computes the homology of $G$ \cite[Remark~3.4]{Arciniega-Cisneros:CRGCH}.
\end{remark}

\subsection{Adamson relative group homology}\label{ssec:Adamson}

If $H$ is a subgroup of $G$ and $X=G/H$ then we have that the homology of the
complex $\CC{B}(G/H)$ gives the \emph{Adamson relative homology groups}
\begin{equation}\label{eq:Hoch.def}
 H_i([G:H];\Z)=H_i(\CC{B}(G/H)),\quad n=0,1,2,\dots,
\end{equation}
defined by Adamson in \cite[\S3]{Adamson:CTNNSNNF} and Hochschild in \cite[\S4]{Hochschild:RHA}. 
More generally, let $\Xsubgr{H}$ be a $G$--set,
such that the action of $G$ on $X$ is \emph{transitive} and the set of isotropy
subgroups of points in $\Xsubgr{H}$ is the conjugacy class of $H$ in $G$. Then
$\Xsubgr{H}$ and $G/H$ are isomorphic as $G$--sets and we have that the homology
of the \emph{transitive permutation representation} $(G,\Xsubgr{H})$ is the
Adamson relative group homology, that is,
\begin{equation*}
 H_n([G:H];\Z)=H_n(G,\Xsubgr{H};\Z)=H_n(\CC{B}(\Xsubgr{H})),\quad n=0,1,2,\dots.
\end{equation*}
Notice that when $H=\{1\}$ we recover the homology groups $H_n(G;\Z)$ of $G$.

There is also a topological definition of Adamson relative group homology, using classifying spaces for
families of isotropy subgroups, which generalise de classifying space of a group. 
We recommend the survey article by L\"uck
\cite{Luck:SCSFS} and the bibliography there for more details.

Let $G$ be a discrete group.
A \emph{family} $\fami$ of subgroups of $G$ is a set of
subgroups of $G$ which is closed under conjugation and taking subgroups.
Let $\{H_{i}\}_{i\in I}$ be a set of subgroups of $G$, we denote by
$\fami(H_{i})$ the family
consisting of all the subgroups of the $\{H_{i}\}_{i\in I}$ and all their
conjugates 
by elements of $G$.

Let $\fami$ be a family of subgroups of $G$. A model for the \emph{classifying
space
for the family $\fami$ of subgroups} is a $G$--$CW$--complex $\EGF{G}$ which has
the following
properties:
\begin{enumerate}
 \item All isotropy groups of $\EGF{G}$ belong to $\fami$.
 \item For any $G$--$CW$--complex $Y$, whose isotropy groups belong to $\fami$,
there is up to $G$--homotopy a unique $G$--map $Y\to \EGF{G}$.\label{it:G-htpy}
\end{enumerate}
In other words, $\EGF{G}$ is a terminal object in the $G$--homotopy category of
$G$--$CW$--complexes, whose isotropy groups belong to $\fami$. In particular two
models
for $\EGF{G}$ are $G$--homotopy equivalent and for two families
$\fami_1\subseteq\fami_2$ there is up to $G$--homotopy precisely one $G$--map
$E_{\fami_1}(G)\to E_{\fami_2}(G)$.

There is a homotopy characterization of $\EGF{G}$ which allows us to determine
whether or not a given $G$--$CW$--complex is a model for $\EGF{G}$.

\begin{theorem}[{\cite[Theorem.~1.9]{Luck:SCSFS}}]\label{thm:hom.ch}
 A $G$--$CW$--complex $X$ is a model for $\EGF{G}$ if and only if all its
isotropy groups belong to $\fami$ and  the $H$--fixed point set $X^H$ is weakly
contractible for each $H\in\fami$ and is empty otherwise. In particular,
$\EGF{G}$ is contractible.
\end{theorem}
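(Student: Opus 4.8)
The plan is to verify the two defining properties of $\EGF{G}$ against the stated fixed--point conditions, using as the single essential tool the adjunction that, for a subgroup $K$ of $G$ and a space $Z$ carrying the trivial $G$--action, $G$--maps $G/K\times Z\to X$ correspond bijectively and continuously to ordinary maps $Z\to X^{K}$; in particular $\mathrm{map}_{G}(G/K,X)\cong X^{K}$. Since every $G$--$CW$--complex is assembled by attaching equivariant cells $G/K\times D^{n}$ along $G/K\times S^{n-1}$, this adjunction turns each equivariant extension problem into a non--equivariant extension problem into the fixed--point space $X^{K}$, which is exactly where the weak contractibility hypothesis will be applied.

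First I would dispose of the bookkeeping about which subgroups occur. For any point $x\in X$ and any $H$ with $x\in X^{H}$ one has $H\subseteq G_{x}$; as a family is closed under subgroups, if all isotropy groups of $X$ lie in $\fami$ then $X^{H}=\emptyset$ whenever $H\notin\fami$, and conversely $X^{H}\neq\emptyset$ forces $H\in\fami$. Thus the ``empty otherwise'' clause is equivalent to the isotropy condition, and only the weak contractibility of $X^{H}$ for $H\in\fami$ remains to be matched with the universal property. For the direction assuming $X$ is a model, condition~(1) gives the isotropy statement directly, hence $X^{H}=\emptyset$ for $H\notin\fami$. For $H\in\fami$ the spaces $G/H$ and $G/H\times S^{n}$ have all isotropy groups conjugate to $H$, hence in $\fami$; applying the universal property~(2) to these spaces and restricting the resulting $G$--maps and $G$--homotopies to the subspace $\{eH\}\times S^{n}$, where they take values in $X^{H}$, I would conclude that $X^{H}$ is nonempty and that every $\alpha\co S^{n}\to X^{H}$ is null--homotopic in $X^{H}$, so $\pi_{n}(X^{H})=0$ for all $n$ and $X^{H}$ is weakly contractible.

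For the converse, which is the substance of the theorem, I would verify the universal property by equivariant obstruction theory. Given a $G$--$CW$--complex $Y$ with isotropy in $\fami$, I would build a $G$--map $Y\to X$ by induction over the equivariant skeleta: extending over a cell $G/K\times D^{n}$ with $K\in\fami$ amounts, via the adjunction, to extending a map $S^{n-1}\to X^{K}$ across $D^{n}$, which is possible precisely because $X^{K}$ is weakly contractible. Uniqueness up to $G$--homotopy follows by the same argument applied to $Y\times I$, whose isotropy groups again lie in $\fami$: two given $G$--maps determine a $G$--map on $Y\times\{0,1\}$ which extends over $Y\times I$ cell by cell. This establishes both defining properties. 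Finally, since any nonempty family contains the trivial subgroup, $X=X^{\{e\}}$ is weakly contractible, and being a $CW$--complex it is contractible by Whitehead's theorem, which gives the last assertion.

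The main obstacle is this converse direction: making the inductive extension precise requires carefully matching the equivariant cell structure of $Y$ (and of $Y\times I$) with the adjunction, and checking that the obstruction to each extension lands in a homotopy group of the appropriate fixed--point space $X^{K}$, which vanishes exactly because $K\in\fami$. Everything else is formal once the adjunction $\mathrm{map}_{G}(G/K\times Z,X)\cong\mathrm{map}(Z,X^{K})$ is in hand.
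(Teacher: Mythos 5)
Your proposal is correct, and it is essentially the standard proof behind this statement: the paper itself gives no argument, quoting the theorem directly from L\"uck's survey \cite{Luck:SCSFS}, whose proof is precisely your cell-by-cell induction over equivariant skeleta using the adjunction $\mathrm{map}_G(G/K\times Z,X)\cong\mathrm{map}(Z,X^K)$, with the extension over each cell $G/K\times D^n$ (and over $Y\times I$ for uniqueness) obstructed only by $\pi_{n-1}(X^K)$, which vanishes for $K\in\fami$. Your reduction of the ``empty otherwise'' clause to the isotropy condition, the forward direction via uniqueness applied to $G/H\times S^n$, and the final contractibility claim via $X=X^{\{1\}}$ and Whitehead's theorem are all sound.
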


Let $H$ be a subgroup of $G$. For a $G$--space $X$, let
$\mathrm{res}_{H}^{G} X$ be the $H$--space obtained by restricting the
group action. If $\fami$ is a family of subgroups of $G$, let
$\fami/H=\sets{L\cap H}{L\in\fami}$ be the induced family of subgroups of
$H$.

\begin{proposition}[{\cite[Proposition~7.2.4]{tomDieck:SLN},
\cite[Proposition~A.5]{Farrell-Jones:ICAKT}}]\label{prop:res}
\begin{equation*}
\mathrm{res}_{H}^{G}\EGF{G}=E_{\fami/H}(H).
\end{equation*}
\end{proposition}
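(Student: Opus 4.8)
The plan is to prove this by invoking the homotopy characterization of classifying spaces for families (Theorem~\ref{thm:hom.ch}) applied to the group $H$, showing that $\mathrm{res}_H^G \EGF{G}$ satisfies the defining properties of a model for $E_{\fami/H}(H)$. The key observation is that a $G$--$CW$--complex, when restricted to the subgroup $H$, becomes an $H$--$CW$--complex in a natural way, since the $G$--cell structure refines to an $H$--cell structure. So $\mathrm{res}_H^G\EGF{G}$ is automatically an $H$--$CW$--complex, and it only remains to check the two conditions characterizing $E_{\fami/H}(H)$.

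First I would verify that the isotropy subgroups of the $H$--action lie in the family $\fami/H$. For a point $x\in\EGF{G}$, its isotropy subgroup under the restricted $H$--action is $H_x=G_x\cap H$. By the defining property of $\EGF{G}$ we have $G_x\in\fami$, so $H_x=G_x\cap H$ is precisely of the form $L\cap H$ with $L\in\fami$, hence belongs to $\fami/H$ by definition. This gives the first condition.

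Next I would analyze the $H$--fixed point sets. The crucial point is to compare, for a subgroup $K$ of $H$, the $K$--fixed point set $\left(\mathrm{res}_H^G\EGF{G}\right)^K$ with the $K$--fixed point set $\EGF{G}^K$ computed inside the $G$--space. These are literally the same subset of $\EGF{G}$, since fixing $K$ is a condition on the points and does not depend on whether we regard the ambient object as a $G$--space or an $H$--space. Now I would apply Theorem~\ref{thm:hom.ch} to $\EGF{G}$ as a $G$--space: for $K\in\fami$ the set $\EGF{G}^K$ is weakly contractible, and it is empty if $K\notin\fami$. To finish I must translate this into a statement about which $K\subseteq H$ lie in $\fami/H$. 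The content to check is that for $K$ a subgroup of $H$, one has $K\in\fami/H$ if and only if $\EGF{G}^K\neq\emptyset$, equivalently $K\in\fami$; this is where the definition $\fami/H=\sets{L\cap H}{L\in\fami}$ combined with the fact that $\fami$ is closed under taking subgroups comes into play, since if $K=L\cap H\subseteq L$ with $L\in\fami$ then $K\in\fami$, and conversely any $K\subseteq H$ with $K\in\fami$ equals $K\cap H$.

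The main obstacle is this last matching of families: ensuring that ``weakly contractible exactly on $\fami/H$ and empty otherwise'' is genuinely equivalent to the conclusion of Theorem~\ref{thm:hom.ch} applied to $\EGF{G}$, which speaks about the family $\fami$ rather than $\fami/H$. The resolution rests on the closure of $\fami$ under subgroups, which forces $\fami/H=\sets{K\subseteq H}{K\in\fami}$, so the two families pick out the same subgroups of $H$. Once this identification is in place, Theorem~\ref{thm:hom.ch} immediately certifies that $\mathrm{res}_H^G\EGF{G}$ is a model for $E_{\fami/H}(H)$, and since any two models are $H$--homotopy equivalent the stated equality holds. I would present the argument in the forward direction and note that the uniqueness clause of the classifying space is what upgrades the characterization into the asserted identity.
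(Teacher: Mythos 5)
Your proof is correct and complete. The paper itself offers no argument for this proposition---it is quoted directly from tom~Dieck and Farrell--Jones---and the proof in those references is essentially the one you give: restrict the $G$--CW structure to $H$, note that isotropy groups become $G_x\cap H\in\fami/H$, and apply the fixed-point characterization of Theorem~\ref{thm:hom.ch}. Two small points you handle correctly but could make fully explicit. First, your assertion that the $G$--cell structure refines to an $H$--cell structure is justified (for discrete $G$) by the double coset decomposition: each $G$--cell $G/K\times D^n$ splits into $H$--cells $H/(H\cap gKg^{-1})\times D^n$ indexed by $H\backslash G/K$. Second, your identification $\fami/H=\sets{K\subseteq H}{K\in\fami}$---closure of $\fami$ under subgroups in one direction, $K=K\cap H$ in the other---is indeed the crux, and it yields as a byproduct that $\fami/H$ is genuinely a family of subgroups of $H$ (closed under $H$--conjugation and under subgroups), a hypothesis tacitly required before Theorem~\ref{thm:hom.ch} can be invoked for $H$ at all.
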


The following proposition gives a simplicial construction of a model for
$\EGF{G}$. Let $\{H_{i}\}_{i\in I}$ be a set of subgroups of $G$
such that every group in $\fami$ is conjugate to a subgroup of an
$H_{i}$, that is, $\fami=\fami(H_{i})$. Consider the disjoint union
$\Delta_\fami=\coprod_{i\in I}G/H_i$, or more generally, let $\Xfami$ be any
$G$--set such that $\fami$ is precisely the set of subgroups of $G$ which fix at
least one point of $\Xfami$. Notice that $\Delta_\fami$ is an example of such a
$G$--set.

\begin{proposition}[{\cite[Proposition~4.16]{Arciniega-Cisneros:CRGCH}}]\label{prop:const3}
Let $\Xfami$ be as above. A model for $\EGF{G}$ is the geometric realization $Y$
of the simplicial set whose $n$--simplices are the ordered
$(n+1)$--tuples $(x_0,\dots,x_n)$ of elements of $\Xfami$. The
face operators are given by
\begin{equation*}
d_i(x_0,\dots,x_n)=(x_0,\dots,\widehat{x_i},\dots,x_n),
\end{equation*}
where $\widehat{x_i}$ means omitting the element $x_i$. The degeneracy operators
are defined by
\begin{equation*}
s_i(x_0,\dots,x_n)=(x_0,\dots,x_i,x_i,\dots,x_n).
\end{equation*}
The action of $g\in G$ on an $n$--simplex $(x_0,\dots,x_n)$ of $Y$ gives $(gx_{0},\dots,gx_{n})$.
\end{proposition}

\begin{remark}\label{rem:fam.H}
Let $H$ be a subgroup of $G$ and consider the family $\fami(H)$ which consists
of all the subgroups of $H$ and their conjugates by elements of $G$. In this
case we can take $\Xfami[\fami(H)]=G/H$.
\end{remark}

\begin{remark}\label{rem:EG}
When $\fami=\{1\}$, the above construction corresponds to the \textit{universal
bundle} $EG$ of $G$. The $G$--orbit space of $EG$ is the classical classifying space $BG$ of $G$. 
In analogy with  $BG$, we denote by $\BGF{G}$ the $G$--orbit space of $\EGF{G}$.
Thus when $\fami=\{1\}$, we have that $B_{\{1\}}(G)=BG$.
\end{remark}

\begin{proposition}[{\cite[Corollary~4.27]{Arciniega-Cisneros:CRGCH}}]\label{prop:CS.HRGH}
Let $G$ be a discrete group. Let $H$ be a subgroup of $G$. Consider the family
of subgroups $\fami(H)$ generated by $H$. Then
\begin{equation*}
 H_n(\BGF[\fami(H)]{G};\Z)\cong H_n([G:H];\Z),\quad n=0,1,2,\dots,
\end{equation*}
where $\BGF[\fami(H)]{G}$ is the orbit space of the classifying space
$\EGF[\fami(H)]{G}$.
\end{proposition}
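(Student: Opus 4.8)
The plan is to assemble Proposition~\ref{prop:CS.HRGH} directly from the two facts already established in the excerpt, so that its proof is essentially a one-line composition of isomorphisms. First I would recall Remark~\ref{rem:Xfami.Xsubgr}, which identifies the simplicial chain complex of the orbit space $\BGF[\fami(H)]{G}$ with the complex $\CC{B}(\Xsubgr{H})$. This is the crucial bridge: it translates a topological object (the singular or cellular homology of the orbit space $\BGF[\fami(H)]{G}$) into the algebraic complex $\CC{B}(X)$ whose homology was defined as the Hochschild relative homology. The identification rests on Proposition~\ref{prop:const3}, which gives the explicit simplicial model for $\EGF[\fami(H)]{G}$ whose $n$--simplices are ordered $(n+1)$--tuples of elements of $\Xfami$, together with Remark~\ref{rem:fam.H} which lets us take $\Xfami[\fami(H)]=G/H$.

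Next I would spell out the chain of identifications explicitly. Passing to the $G$--orbit space of the simplicial $G$--set from Proposition~\ref{prop:const3} corresponds, at the level of chain complexes, exactly to the functor $-\otimes_{\Z[G]}\Z$, since coinvariants of the free module on the simplices compute the simplicial chains of the quotient. Thus the simplicial chain complex of $\BGF[\fami(H)]{G}$ is $\CC{C}(\Xsubgr{H})\otimes_{\Z[G]}\Z=\CC{B}(\Xsubgr{H})$, which is precisely the complex appearing in \eqref{eq:B(X)}. Taking homology in degree $n$ then gives
\begin{equation*}
 H_n(\BGF[\fami(H)]{G};\Z)\cong H_n(\CC{B}(\Xsubgr{H}))=H_n(G,\Xsubgr{H};\Z).
\end{equation*}
Because the $G$--action on $\Xsubgr{H}\cong G/H$ is transitive with isotropy subgroups forming exactly the conjugacy class of $H$, the discussion following \eqref{eq:Hoch.def} identifies $H_n(G,\Xsubgr{H};\Z)$ with the Hochschild relative homology $H_n([G:H];\Z)$. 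Concatenating these isomorphisms yields the claimed $H_n(\BGF[\fami(H)]{G};\Z)\cong H_n([G:H];\Z)$.

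Since all the substantive work has been done in the preceding remarks and propositions, I do not anticipate a genuine obstacle here; this is why the excerpt introduces the statement with the phrase \emph{``Then the following is immediate.''} The one point that would merit a sentence of care is verifying that the chain complex of the orbit space agrees on the nose with $\CC{B}(\Xsubgr{H})$ rather than merely being quasi-isomorphic to it --- that is, confirming that the orbit space of the geometric realization has cells indexed by $G$--orbits of tuples and that its cellular boundary map coincides with $\partial_*\otimes\mathrm{id}_\Z$. This is exactly the content of Remark~\ref{rem:Xfami.Xsubgr}, so I would simply cite that remark together with the isomorphism $\CC{B}(\Xsubgr{H})\cong\CC{B}(G/H)$ it records, and conclude.
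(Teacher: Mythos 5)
Your proof is correct and takes essentially the same route as the paper, which states the proposition as immediate from Remark~\ref{rem:Xfami.Xsubgr} (the simplicial chain complex of $\BGF[\fami(H)]{G}$ is $\CC{B}(\Xsubgr{H})\cong\CC{B}(G/H)$, via the simplicial model of Proposition~\ref{prop:const3} and Remark~\ref{rem:fam.H}) combined with the definition \eqref{eq:Hoch.def} of $H_n([G:H];\Z)$. Your added sentence identifying passage to the $G$--orbit space with the chain-level functor $-\otimes_{\Z[G]}\Z$ merely makes explicit what the paper leaves implicit in that remark.
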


When $H=\{1\}$ we recover the well-known fact $H_\bullet(BG;\Z)\cong H_\bullet(G;\Z)$.

So we can say that the space $\BGF[\fami(H)]{G}$ is a classifying space for the
permutation representation $(G,\Xsubgr{H})$, compare with Blowers
\cite{Blowers:CSPR} where a classifying space for an arbitrary permutation
representation is constructed for Snapper's cohomology \cite{Snapper:CPRISS}.

\begin{proposition}[{\cite[Proposition~4.28]{Arciniega-Cisneros:CRGCH}}]\label{prop:3.IT}
Let $K$ be a normal subgroup of $G$ contained in $H$. Then we have an
isomorphism of relative homology groups
\begin{equation*}
 H_n([G:H];\Z)\cong H_n([G/K:H/K];\Z),\quad n=0,1,2,\dots.
\end{equation*}
\end{proposition}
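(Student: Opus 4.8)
The plan is to construct an explicit isomorphism of chain complexes $\CC{B}(G/H)\cong\CC{B}((G/K)/(H/K))$ and then pass to homology, using the definition $\CC{B}(X)=\CC{C}(X)\otimes_{\Z[G]}\Z$ from \eqref{eq:B(X)} together with the fact that, by the defining formulas, $H_n([G:H];\Z)=H_n(\CC{B}(G/H))$ and $H_n([G/K:H/K];\Z)=H_n(\CC{B}((G/K)/(H/K)))$.

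First I would use the third isomorphism theorem to identify the coset space $G/H$ with $(G/K)/(H/K)$: since $K$ is normal in $G$ and contained in $H$, the quotient $H/K$ is a subgroup of $G/K$, and the assignment $gH\mapsto (gK)(H/K)$ is a well-defined bijection. Writing $\pi\co G\to G/K$ for the canonical projection, this bijection is equivariant with respect to the left $G$--action on $G/H$ and the left $G/K$--action on $(G/K)/(H/K)$, the two actions being intertwined by $\pi$. Applying $\CC[n]{C}(-)$, which assigns to a set the free abelian group on its ordered $(n+1)$--tuples, yields an isomorphism of abelian groups $\CC[n]{C}(G/H)\cong\CC[n]{C}((G/K)/(H/K))$ matching generators; since the face and degeneracy operators act coordinatewise, this isomorphism commutes with them, hence it is an isomorphism of (simplicial) complexes.

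The key point is then to compare the two coinvariant constructions. Because $K$ is normal and contained in $H$, for $k\in K$ and $g\in G$ we have $g^{-1}kg\in K\subseteq H$, so $kgH=gH$; that is, $K$ acts trivially on $G/H$ and the $G$--module structure on each $\CC[n]{C}(G/H)$ factors through $G/K$. For any $\Z[G]$--module $M$ on which $K$ acts trivially, the subgroup generated by the elements $gm-m$ ($g\in G$) coincides with the one generated by $\bar{g}m-m$ ($\bar{g}\in G/K$), since $\pi$ is surjective; therefore $M\otimes_{\Z[G]}\Z=M\otimes_{\Z[G/K]}\Z$. Applying this with $M=\CC[n]{C}(G/H)$ and combining with the chain isomorphism of the previous paragraph gives $\CC[n]{B}(G/H)\cong\CC[n]{B}((G/K)/(H/K))$, compatibly with the boundary maps (which are alternating sums of faces and hence preserved), whence $H_n([G:H];\Z)\cong H_n([G/K:H/K];\Z)$ for all $n$.

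The only routine verifications are the well-definedness and bijectivity of $gH\mapsto(gK)(H/K)$ and the equality of the two coinvariant subgroups; neither presents a genuine obstacle. The one point requiring care, and the place where both hypotheses are used, is checking that the $G$--action on $G/H$ really descends to $G/K$: normality of $K$ is needed to conclude $g^{-1}kg\in K$, and the inclusion $K\subseteq H$ is needed to conclude that this element lies in $H$, so that $K$ fixes every coset. Once that is in hand the rest of the argument is formal.
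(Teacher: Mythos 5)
Your proposal is correct and takes essentially the same route as the paper: the third-isomorphism-theorem bijection $G/H\cong (G/K)\bigm/(H/K)$, equivariant with respect to the actions intertwined by the projection $G\to G/K$, inducing an isomorphism of the complexes $\CC{B}(G/H)$ and $\CC{B}\bigl((G/K)\bigm/(H/K)\bigr)$ commuting with the boundary operators. Your extra care in checking that $K$ acts trivially on $G/H$ and hence that the $\Z[G]$-- and $\Z[G/K]$--coinvariants agree is exactly the detail the paper's terse proof leaves implicit, and it is verified correctly.
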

If $H$ is a normal subgroup of $G$, then we have an isomorphism $H_\bullet(G/H;\Z)\cong H_\bullet([G:H];\Z)$.

\subsubsection{Natural $G$--maps and induced homomorphism}\label{sssec:maps}

Let $H$ and $K$ be subgroups of $G$. There exists a $G$--map $G/H\to G/K$ if and
only if 
there exists $a\in G$ such that $a^{-1}Ha\subset K$ and is given by
\begin{align*}
R_a\co G/H&\to G/K,\\
gH&\mapsto gaK. 
\end{align*}
Any $G$--map $G/H\to G/K$ is of the form $R_a$ for some $a\in G$ such that
$a^{-1}Ha\subset K$ and $R_a=R_b$ only if $a^{-1}b\in K$, see tom~Dieck
\cite[Proposition~I(1.14)]{tomDieck:TransGrp}.

Let $H$ and $K$ be subgroups of $G$ such that $H$ is conjugate to a subgroup of
$K$, then there is a $G$--map
\begin{equation*}
h_H^K\co \Xsubgr{H}\to\Xsubgr{K}.
\end{equation*}
This induces a $G$--homomorphism
\begin{equation*}
 (h_H^K)_*\co\CC{C}(\Xsubgr{H})\to\CC{C}(\Xsubgr{K}),
\end{equation*}
which in turn induces a homomorphism of homology groups
\begin{equation}\label{eq:hHK}
 (h_H^K)_*\co H_n([G:H];\Z)\to H_n([G:K];\Z).
\end{equation}

\begin{remark}\label{rem:hik*}
Let $H$ and $K$ be subgroups of $G$. Consider the families $\fami(H)$ and
$\fami(K)$ generated by $H$ and $K$ respectively and suppose that
$\fami(H)\subset\fami(K)$. 
Then there exists a $G$--map unique up to $G$--homotopy
$\EGF[\fami(H)]{G}\to\EGF[\fami(K)]{G}$.
Notice that $\fami(H)\subset\fami(K)$ implies that $H$ is conjugate to a
subgroup of $K$ and therefore there exists a $G$--map
$h_H^K\co\Xsubgr{H}=\Xfami[\fami(H)]\to\Xsubgr{K}=\Xfami[\fami(K)]$. Using the
Simplicial Construction of Proposition~\ref{prop:const3} we can see the $G$--map
$\EGF[\fami(H)]{G}\to\EGF[\fami(K)]{G}$ as the simplicial $G$--map given in
$n$--simplices by
\begin{equation*}
 (x_0,\dots,x_n)\mapsto \bigl(h_H^K(x_0),\dots,h_H^K(x_n)\bigr),\qquad
x_i\in\Xfami[\fami(H)],\ i=0,\dots,n.
\end{equation*}
This map induces a canonical map $\BGF[\fami(H)]{G}\to\BGF[\fami(K)]{G}$
between the corresponding $G$--orbit spaces. This map in turn induces a
canonical homomorphism in homology
\begin{equation*}
 H_n(\BGF[\fami(H)]{G};\Z)\to H_n(\BGF[\fami(K)]{G};\Z),
\end{equation*}
which by Proposition~\ref{prop:CS.HRGH} corresponds to the homomorphism
$(h_H^K)_*$ in \eqref{eq:hHK}.
\end{remark}

\subsection{Takasu relative group homology}

Let $H$ be a subgroup of $G$, the classifying space $BH$ can be regarded as a
subspace of the classifying space $BG$: let $\iota\colon BH\to BG$ be the map induced by the inclusion of $H$ in $G$; the \emph{mapping cylinder} $\mathrm{Cyl}(\iota)$ of $\iota$ is a model for $BG$ since it is homotopically equivalent to $BG$ and it clearly contains $BH$ as subspace. We define the \emph{Takasu relative
homology groups} denoted by $H_n(G,H)$, as the homology of the pair
$H_n(BG,BH;\Z)$.

There is a description of these relative
homology groups in terms of $G$--projective (in particular $G$--free)
resolutions: Let $H$ be a subgroup of $G$. Given a $G$--module $N$ we consider
the epimorphism
\begin{align*}
 \theta\co \Z[G]\otimes_{\Z[H]}N&\to N\\
   g\otimes n &\mapsto gn
\end{align*}
and define the $G$--module
\begin{equation*}
 I_{(G,H)}(N)=\ker\theta.
\end{equation*}
We have that $I_{(G,H)}(N)$ is a covariant exact functor from the category of
left $G$--modules to itself with respect to the variable $N$, see Takasu
\cite[Proposition~1.1 (i)]{Takasu:RHRCTG}.

Consider $\Z$ as a trivial $G$--module. For any free $G$--resolution $\CC{F}$ of
$I_{(G,H)}(\Z)$, we have a canonical isomorphism
\begin{equation}\label{eq:Takasu.Tor}
H_n(G, H )\cong  H_{n-1}(\CC{F}\otimes_{\Z[G]} \Z),\quad n=1,2,\dots.
\end{equation}
That is, $H_n(G,H) = \mathrm{Tor}^{\Z[G]}_{n-1}(I_{(G,H)}(\Z),\Z)$, see Takasu
\cite[Def.~\S2 (i) \& Proposition~3.2]{Takasu:RHRCTG}.

\begin{remark}\label{rem:IGH=ker}
Notice that  $\Z[G]\otimes_{\Z[H]}\Z \cong \Z[G/H]=\CC[0]{C}(G/H)$ and with this
isomorphism the epimorphism $\theta$ corresponds to the augmentation 
$\varepsilon\co\CC[0]{C}(G/H)\to\Z$. Therefore
\begin{equation*}
 I_{(G,H)}(\Z)=\ker\varepsilon.
\end{equation*}
\end{remark}

The relative homology groups fit in the long exact sequence
\cite[Proposition~2.1]{Takasu:RHRCTG}
\begin{equation}\label{eq:long.es}
\dots\to H_i(H;\Z)\to H_i(G;\Z)\to H_i(G,H;\Z)\to H_{i-1}(H;\Z)\to\dots
\end{equation}
which corresponds to the long exact sequence for the pair $(BG,BH)$.

Let $H$ and $K$ be a subgroup of $G$ such that $H\leq K\leq G$. 
There exists an induced homomorphism \cite[Proposition~1.2]{Takasu:RHRCTG}
\begin{equation*}
I_{(G,H)}(\Z)\to I_{(G,K)}(\Z),
\end{equation*}
which in turn, by the functoriality of $Tor_{n-1}^{\Z[G]}$ induces a
homomorphism
\begin{equation*}
H_n(G,H;\Z)\to H_n(G,K;\Z).
\end{equation*}

\subsubsection{Free $G$--resolution for $I_{(G,H)}(\Z)$}\label{sssec:free}

Let $G$ be a group of infinite order. Let $H$ and $K$ be subgroups of $G$ such that $H$ is conjugate to a subgroup of
$K$ and suppose $K$ has \emph{infinite} index in $G$. Consider a $G$--map $h_H^K\colon G/H\to G/K$ described in Subsection~\ref{sssec:maps}.

Let $\CC{C}^{h_H^K\neq}(\Xsubgr{H})$ be the subcomplex of $\CC{C}(\Xsubgr{H})$
generated by tuples mapping to different elements by the homomorphism $h_H^K$.
This subcomplex is acyclic \cite[Lemma~6.1]{Arciniega-Cisneros:CRGCH} and we call it the \emph{$h_H^K$--subcomplex} of $\CC{C}(\Xsubgr{H})$.
As before, set 
\begin{equation*}
 \CC{B}^{h_H^K\neq}(\Xsubgr{H})=\CC{C}^{h_H^K\neq}(\Xsubgr{H})\otimes_{\Z[G]}\Z.
\end{equation*}

\begin{remark}\label{rem:H.neq}
In the case $H=K$, we can take $h_H^H$ to be the identity. Hence
$\CC{C}^{h_H^H\neq}(\Xsubgr{H})$ is generated by tuples of distinct elements, so
we just denote it by $\CC{C}^{\neq}(\Xsubgr{H})$. 
\end{remark}

In some cases the $h_H^K$--subcomplex $\CC{C}^{h_H^K\neq}(\Xsubgr{H})$ 
computes $H_n(G,H;\Z)$.

Let $H$ and $K$ be subgroups of $G$ such that $H$ is a subgroup of
$K$. The group $H$ is \emph{$K$--malnormal} if for any
$g\notin K$ we have $H\cap gHg^{-1}=\{e\}$ where $e\in G$ is the identity
element. If $H$ is $H$--malnormal we just say it is \emph{malnormal}.

Let $H$ and $K$ be subgroups of $G$ such that $H$ is conjugate to a subgroup of
$K$. We say that the subgroup $H$ is \emph{$K$--malnormal conjugate} if $H$ is conjugate to a $K$--malnormal subgroup of $K$.

\begin{theorem}[{\cite[Theorem~6.4]{Arciniega-Cisneros:CRGCH}}]\label{thm:hHK.Takasu}
Let $H$ and $K$ be subgroups of $G$ such that $H$ is conjugate to a subgroup of
$K$ and suppose $K$ has \emph{infinite} index in $G$. Also assume that 
$H$ is $K$--malnormal conjugate.
Consider the $h_H^K$--subcomplex $\CC{C}^{h_H^K\neq}(G/H)$. Then $\CC{C}^{h_H^K\neq}(G/H)$ is a free
$G$-resolution of $I_{(G,H)}(\Z)$ and therefore for any trivial $G$--module $A$ we have isomorphisms
\begin{align*}
\homo[n](G,H;A)&\cong \homo[n](\CC{B}^{h_H^K\neq}(G/H;A)),\\
\end{align*}
for $ n=1,2,\dots$.
\end{theorem}

In Theorem~\ref{thm:hHK.Takasu} one has to be careful with the shift of indices in the definition of Takasu relative group homology (see \cite[Remark~6.5]{Arciniega-Cisneros:CRGCH}).

\begin{theorem}[{\cite[Theorem~7.13]{Arciniega-Cisneros:CRGCH}}]\label{thm:malnormal.A=T}
Let $H$ be a malnormal subgroup of $G$ and let $A$ be a trivial $G$--module. Then
\begin{equation*}
\homo[n](G,H;A)=\homo[n]([G:H];A)
\end{equation*}
for $n\geq 1$.
\end{theorem}

\subsection{Comparison of Adamson and Takasu relative group
homologies}\label{ssec:Taka.Hoch}

Adamson and Takasu relative group homologies do not coincide in general (see \cite[\S7]{Arciniega-Cisneros:CRGCH}).

From the definitions of $H_n(G,H;\Z)$ and $H_n([G:H];\Z)$ in terms of
resolutions we get a canonical homomorphism between them (see \cite[\S7.1]{Arciniega-Cisneros:CRGCH}).

Let $\CC{F}$ be  any free $G$--resolution of $I_{(G,H)}(\Z)$. By
Remarks~\ref{rem:resol} and \ref{rem:IGH=ker} we have that the complex
\begin{equation*}
 \dots\to
\CC[3]{C}(G/H)\xrightarrow{\partial_3}\CC[2]{C}(G/H)\xrightarrow{\partial_2}\CC[
1]{C}(G/H)\xrightarrow{\partial_1}I_{(G,H)}(\Z)\to 0,
\end{equation*}
is a $G$--resolution of $I_{(G,H)}(\Z)$. By the Comparison Theorem for
Resolutions there is a chain map
$\lambda_{n+1}\colon\CC[n]{F}\to\CC[n+1]{C}(G/H)$, $n\geq0$, unique up to chain homotopy
which by \eqref{eq:Takasu.Tor} and \eqref{eq:Hoch.def} induces a homomorphism
\begin{equation}\label{eq:Taka.Hoch}
 \lambda_n\colon H_n(G,H;\Z)\to H_n([G:H];\Z),\quad n=2,3,\dots.
\end{equation}

As a consequence of Theorem~\ref{thm:hHK.Takasu} we have the following  

\begin{corollary}[{\cite[Corollary~7.5]{Arciniega-Cisneros:CRGCH}}]\label{cor:sc.c}
Let $H$ and $K$ be subgroups of $G$ such that $H$ is conjugate to a subgroup of
$K$ and suppose $K$ has \emph{infinite} index in $G$. Also assume that 
$H$ is $K$--malnormal conjugate.
Then the chain morphism $\lambda_*$ is given by the inclusion of complexes
$\CC{C}^{h_H^K\neq}(G/H)\to\CC{C}(G/H)$.
\end{corollary}

In \cite[Theorem~7.13]{Arciniega-Cisneros:CRGCH} it is proved that a sufficient 
condition for the equality of Adamson and Takasu relative homologies of a pair $(G,H)$ is that
the subgroup $H$ be a malnormal subgroup.

\section{The group $\SL{\C}$ and some of its subgroups}\label{sec:GUPB}

We denote by $\C^\times$ the multiplicative group of the field of complex
numbers.
Consider the group $G=\SL{\C}$ and let $H$ be one of the
following subgroups of $G$: 
\begin{gather*}
\pm I =\left\{\begin{pmatrix}
            \pm1 & 0\\ 0 & \pm1
            \end{pmatrix}\right\},\quad 
T=\left\{\begin{pmatrix}
            a & 0\\ 0 & a^{-1}
            \end{pmatrix}\,\biggm|\,\text{$a\in\C^\times$}\right\},\quad
U=\left\{\begin{pmatrix}
            1 & b\\ 0 & 1
            \end{pmatrix}\,\biggm|\, b\in\C\right\},\\[5pt]
P=\left\{\begin{pmatrix}
            \pm1 & b\\ 0 & \pm1
           \end{pmatrix}\,\biggm|\, b\in\C\right\},\qquad
B=\left\{\begin{pmatrix}
           a & b\\ 0 & a^{-1}
           \end{pmatrix}\,\biggm|\,\text{$a\in\C^\times$, $b\in\C$}\right\}.
\end{gather*}
By abuse of notation, we denote by $I$ the identity matrix and also the subgroup
of $G$ which consists only of the identity matrix. 
Denote by $\bar{G}=G/\pm I=\PSL{\C}$. Given a subgroup $H$ of $G$ denote by
$\bar{H}$ the image of $H$ in $\bar{G}$. Notice that $\bar{U}=\bar{P}$. We
denote by $\bar{g}$ the element of $\bar{G}$ with representative $g\in G$. We
shall use this notation through the rest of the article except in
Section~\ref{sec:G.H.rep}.
As usual we consider all groups with the discrete topology. 

We list some known facts about these groups. Their proofs are in Lang
\cite{Lang:Algebra}.
\begin{itemize}
 \item $\SL{\C}$ is generated by elementary matrices
\cite[Lem.~XIII.8.1]{Lang:Algebra}.

\item $B$ is a maximal proper subgroup
\cite[Proposition~XIII.8.2]{Lang:Algebra}.

\item \textbf{Bruhat decomposition:} \cite[XIII \S8, p.~538]{Lang:Algebra}. Let
$w=\left(\begin{smallmatrix}
                                             0 & 1\\ -1 &0
                                            \end{smallmatrix}\right)$. There is
a decomposition of $\SL{\C}$ into disjoint subsets
\begin{equation*}
 \SL{\C}=B\cup BwB.
\end{equation*}

\item The subgroups $U$ and $P$ are normal in $B$ and we have the exact
sequences
\begin{gather}
I\to U\xrightarrow{i} B\to  T\cong \C^\times\to I\label{eq:es.UBT}\\[5pt]
I\to P\xrightarrow{i} B\to  T\cong \C^\times\to I.\notag
 \end{gather}

\item $\PSL{\C}$ is a simple group \cite[Theorem~XIII.8.4]{Lang:Algebra}. Hence,
$\pm I$ is the only normal subgroup of $\SL{\C}$.
\end{itemize}

In this section we give models for the $G$--sets $\Xsubgr{H}$ with $H=U,P,B$ and the explicit $G$-maps between them.

\begin{remark}\label{rem:XH=XbH}
Recall that for $H=P,B$ we have bijections of sets $G/H\cong \bar{G}/\bar{H}$
which are 
equivariant with respect to the actions of $G$ on $G/H$ and of $\bar{G}$ on
$\bar{G}/\bar{H}$ via the natural projection $G\to \bar{G}$. Thus, we have that
$\Xsubgr{H}=\Xsubgr{\bar{H}}$ as sets, the subgroup will indicate whether we are
considering the action of $G$ or $\bar{G}$ on it.
\end{remark}

\subsection{The $G$--set $\Xsubgr{U}$}

Consider the action of $G$ on $\C^2\setminus\{(0,0)\}$ given by left matrix
multiplication. 

\begin{proposition}\label{prop:trans}
 The group $G$ acts transitively on $\C^2\setminus\{(0,0)\}$.
\end{proposition}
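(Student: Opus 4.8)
The plan is to show that any nonzero vector can be carried to a fixed reference vector, say $\mathbf{v}_0=(1,0)$, by an element of $SL_2(\C)$; transitivity then follows since the relation ``$\mathbf{v}$ and $\mathbf{w}$ lie in the same orbit'' is an equivalence relation, and every orbit meets $\{\mathbf{v}_0\}$. So it suffices to exhibit, for an arbitrary $(a,b)\neq(0,0)$, a matrix $g\in SL_2(\C)$ with $g\cdot(1,0)^{t}=(a,b)^{t}$. Writing a candidate $g=\bigl(\begin{smallmatrix} a & c\\ b & d\end{smallmatrix}\bigr)$, the condition $g\cdot(1,0)^{t}=(a,b)^{t}$ is automatic from the first column, and the only remaining requirement is $\det g = ad-bc = 1$.

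First I would treat the case $a\neq 0$: choose $c=0$ and $d=a^{-1}$, giving the lower-triangular matrix $\bigl(\begin{smallmatrix} a & 0\\ b & a^{-1}\end{smallmatrix}\bigr)$, which has determinant $1$ and sends $(1,0)^{t}$ to $(a,b)^{t}$. If instead $a=0$, then necessarily $b\neq 0$, and I would take $c=-b^{-1}$, $d$ arbitrary (say $d=0$), giving $\bigl(\begin{smallmatrix} 0 & -b^{-1}\\ b & 0\end{smallmatrix}\bigr)$ with determinant $b^{-1}b=1$, which again sends $(1,0)^{t}$ to $(0,b)^{t}$. In either case we have produced an element of $SL_2(\C)$ mapping $\mathbf{v}_0$ to the prescribed vector.

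There is no real obstacle here; the only point requiring a moment's care is the case split forced by whether $a$ vanishes, since one cannot uniformly pick $d=a^{-1}$. Once both cases are handled, transitivity is immediate: given any two nonzero vectors $\mathbf{v},\mathbf{w}$, there are $g_1,g_2\in SL_2(\C)$ with $g_1\cdot\mathbf{v}_0=\mathbf{v}$ and $g_2\cdot\mathbf{v}_0=\mathbf{w}$, whence $g_2g_1^{-1}\cdot\mathbf{v}=\mathbf{w}$, establishing that $G$ acts transitively on $\C^2\setminus\{(0,0)\}$.
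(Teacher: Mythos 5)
Your proof is correct, but it is organized differently from the paper's. You reduce transitivity to the statement that the single reference vector $(1,0)$ can be carried to any $(a,b)\neq(0,0)$, which needs only two easy cases (lower-triangular $\bigl(\begin{smallmatrix} a & 0\\ b & a^{-1}\end{smallmatrix}\bigr)$ when $a\neq 0$, and the counterdiagonal $\bigl(\begin{smallmatrix} 0 & -b^{-1}\\ b & 0\end{smallmatrix}\bigr)$ when $a=0$), and then concludes via the standard orbit argument $g_2g_1^{-1}\cdot\mathbf{v}=\mathbf{w}$; both matrices check out, and the case split you flag is exactly the only subtlety. The paper instead proves two-point transitivity directly, exhibiting for arbitrary $(x,y)$ and $(z,w)$ an explicit $g\in\SL{\C}$ with $g(x,y)^{t}=(z,w)^{t}$, at the cost of a heavier case analysis (on $x\neq 0$ versus $x=0$, and within each on $w\neq 0$ versus $w=0$ or $z\neq 0$). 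What your route buys is brevity and conceptual clarity; what the paper's route buys is the explicit transporter matrices themselves, which are reused later --- the proof of Proposition~\ref{prop:trans.Sym} produces matrices $g'$ that are observed (in the remark following it) to be exactly the negatives of the matrices $g$ from this proof, a comparison that requires having the formulas in hand. So your argument is a perfectly valid and leaner substitute for the transitivity statement alone, though one would lose the explicit formulas the paper refers back to.
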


\begin{proof}
 Let $(x,y)$ and $(z,w)$ be elements of $\C^2\setminus\{(0,0)\}$. Then a matrix
$g=(\begin{smallmatrix}
  a & b\\c & d
 \end{smallmatrix})\in SL_2(\C)
$ which send $(x,y)$ to $(z,w)$ is given by:
\begin{description}
 \item[If $x\neq0$] we have two cases:
\begin{description}
 \item[$w\neq0$]
\begin{equation*}
 a=\frac{zw+xy}{xw},\quad b=-\frac{x}{w},\quad c=\frac{w}{x},\quad d=0.
\end{equation*}
That is
\begin{equation*}
 g=\begin{pmatrix}
    \frac{zw+xy}{xw} & -\frac{x}{w}\\[5pt]\frac{w}{x} & 0
   \end{pmatrix}
\end{equation*}

\item[$w=0$] which implies that $z\neq0$
\begin{equation*}
 a=\frac{z-by}{x},\quad c=-\frac{y}{z},\quad d=\frac{x}{z},\quad b=\text{any
complex number.}
\end{equation*}
That is (simplifying taking $b=0$)
\begin{equation*}
g=\begin{pmatrix}
 \frac{z}{x} & 0 \\[5pt] -\frac{y}{z} & \frac{x}{z}
\end{pmatrix}
\end{equation*}
\end{description}
\item[If $x=0$] which implies $y\neq0$, then we have
\begin{equation*}
 b=\frac{z}{y},\quad d=\frac{w}{y},\quad a=\begin{cases}
					   0 & \text{if $z\neq0$,}\\
					   \frac{y}{w} & \text{if $w\neq0$.}
					  \end{cases}
\quad c=\begin{cases}
         -\frac{y}{z} & \text{if $z\neq0$,}\\
	 0 & \text{if $w\neq0$.}
      \end{cases}
\end{equation*}
That is
\begin{align*}
g&= \begin{pmatrix}
  0 & \frac{z}{y}\\[5pt] -\frac{y}{z} & \frac{w}{y}
 \end{pmatrix} &&\text{if $z\neq0$,}\\[5pt]
g&= \begin{pmatrix}
     \frac{y}{w} & \frac{z}{y}\\[5pt] 0 & \frac{w}{y}
    \end{pmatrix}&&\text{if $w\neq0$.}
\end{align*}
\end{description}
This proves the transitivity of the action.
\end{proof}

Hence we have that
\begin{proposition}
The isotropy subgroup of $(1,0)$ is $U$. Therefore, there is a $G$--isomorphism
between $SL_2(\C)/U$ and  $\Xsubgr{U}$ given by
\begin{align*}
 SL_2(\C)/U &\to \Xsubgr{U}\\
gU &\mapsto g\begin{pmatrix}
              1\\0
             \end{pmatrix}.
\end{align*}
\end{proposition}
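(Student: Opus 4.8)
The plan is to reduce the statement to the orbit--stabilizer theorem, so the real content is the computation of the isotropy subgroup of $\left(\begin{smallmatrix}1\\0\end{smallmatrix}\right)$. First I would compute this stabilizer directly. Writing a general element as $g=\matr{a}{b}{c}{d}\in\SL{\C}$, the fixing condition $g\left(\begin{smallmatrix}1\\0\end{smallmatrix}\right)=\left(\begin{smallmatrix}1\\0\end{smallmatrix}\right)$ reads $\left(\begin{smallmatrix}a\\c\end{smallmatrix}\right)=\left(\begin{smallmatrix}1\\0\end{smallmatrix}\right)$, forcing $a=1$ and $c=0$. Imposing $\det g=ad-bc=1$ then yields $d=1$, while $b\in\C$ is unconstrained, so $g=\matr{1}{b}{0}{1}$; that is, the isotropy subgroup is exactly $U$.

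With this in hand I would deduce the $G$--isomorphism. By Proposition~\ref{prop:trans} the action on $\Xsubgr{U}=\C^2\setminus\{(0,0)\}$ is transitive, so every isotropy subgroup is conjugate to $G_{(1,0)}=U$, and conversely each conjugate $gUg^{-1}$ occurs as the stabilizer of $g\left(\begin{smallmatrix}1\\0\end{smallmatrix}\right)$; hence the set of point--stabilizers is precisely the conjugacy class of $U$, which is exactly the property characterizing the $G$--set $\Xsubgr{U}$. The orbit map $gU\mapsto g\left(\begin{smallmatrix}1\\0\end{smallmatrix}\right)$ is well defined, since for $u\in U$ one has $gu\left(\begin{smallmatrix}1\\0\end{smallmatrix}\right)=g\left(\begin{smallmatrix}1\\0\end{smallmatrix}\right)$; it is manifestly $G$--equivariant; it is surjective by transitivity; and it is injective precisely because the stabilizer equals $U$. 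Therefore it is the desired $G$--isomorphism $\SL{\C}/U\to\Xsubgr{U}$.

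I expect no genuine obstacle here, as both steps are elementary. The only point meriting care is that injectivity of the orbit map relies on the stabilizer being \emph{exactly} $U$ rather than merely containing it; this is guaranteed by the determinant constraint, which pins $d=1$ and rules out any larger isotropy subgroup.
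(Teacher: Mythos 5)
Your proposal is correct and follows essentially the same route as the paper: the paper's proof is exactly the direct stabilizer computation ($a=1$, $c=0$, and the determinant condition forcing $d=1$, hence $g\in U$), with the $G$--isomorphism then following from transitivity (Proposition~\ref{prop:trans}) via orbit--stabilizer. You merely make explicit the well-definedness, equivariance, surjectivity and injectivity checks that the paper leaves implicit, which is a harmless elaboration rather than a different argument.
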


\begin{proof}
Let $g=(\begin{smallmatrix}
       a & b\\ c & d
      \end{smallmatrix})\in \SL{\C}$. Then
\begin{equation*}
 \begin{pmatrix}
  a & b\\
  c & d
 \end{pmatrix}\begin{pmatrix}
               1\\0
              \end{pmatrix}=\begin{pmatrix}
                             a\\c 
                            \end{pmatrix}=\begin{pmatrix}
                                           1\\0
                                          \end{pmatrix}
\end{equation*}
implies that $a=1$ and $c=0$, but since the determinant of $g$ is $1$ we have
that $d=1$ and therefore $g=(\begin{smallmatrix}
       1 & b\\ 0 & 1
      \end{smallmatrix})\in U$.
\end{proof}
Therefore we can set $\Xsubgr{U}=\C^2\setminus\{(0,0)\}$.

\subsection{The $G$--set $\Xsubgr{P}$}\label{ssec:XP}

Now consider the action of $\Z_2$ on $\C^2\setminus\{(0,0)\}$ such that $-1$
sends the pair $(x,y)$ to its antipodal point $(-x,-y)$. Consider the orbit
space $\C^2\setminus\{(0,0)\}/\Z_2$ and denote by $[x,y]=\{(x,y),(-x,-y)\}$ the
orbit of the pair $(x,y)$.

Let $g\in G$ and $[x,y]\in\C^2\setminus\{(0,0)\}/\Z_2$. We define an action of
$G$ on $\C^2\setminus\{(0,0)\}/\Z_2$ given by
\begin{equation*}
 g\cdot\begin{bmatrix}
        x\\y
       \end{bmatrix}=\biggl[g\begin{pmatrix}
                              x\\y
                     \end{pmatrix}\biggr].
\end{equation*}
This gives a well-defined action since
\begin{equation*}
 \begin{pmatrix}
  a & b\\
  c & d
 \end{pmatrix}\begin{pmatrix}
               x\\y
              \end{pmatrix}=\begin{pmatrix}
                             ax+by\\cx+dy
                            \end{pmatrix}\sim\begin{pmatrix}
                                              -ax-by\\-cx-dy
                                             \end{pmatrix}=\begin{pmatrix}
                                                            a & b\\
							    c & d
                                                          
\end{pmatrix}\begin{pmatrix}
                                                                         -x\\-y
                                                                       
\end{pmatrix}.
\end{equation*}

\begin{remark}\label{rem:P.bG}
Since $-I$ acts as the identity, this action descends to an action of $\bar{G}$.
\end{remark}

By the definition of the action of $G$ on $\C^2\setminus\{(0,0)\}/\Z_2$ and
Proposition~\ref{prop:trans} we have
\begin{corollary}
 The group $G$ acts transitively on $\C^2\setminus\{(0,0)\}/\Z_2$.
\end{corollary}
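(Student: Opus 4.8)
The plan is to deduce transitivity on the quotient directly from transitivity on $\C^2\setminus\{(0,0)\}$, which is Proposition~\ref{prop:trans}, by observing that the quotient projection is $G$--equivariant. The guiding principle is that passing to a quotient can only make a group action ``more transitive'', never less; so once the action upstairs is transitive, the induced action downstairs is automatically transitive.

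First I would set up the quotient projection
\begin{equation*}
 \pi\co\C^2\setminus\{(0,0)\}\to\C^2\setminus\{(0,0)\}/\Z_2,\qquad (x,y)\mapsto[x,y],
\end{equation*}
and observe that, by the very definition of the $G$--action on $\C^2\setminus\{(0,0)\}/\Z_2$ given just above, $\pi$ is $G$--equivariant: for every $g\in G$ and every $(x,y)$ one has $\pi(g\cdot(x,y))=g\cdot\pi(x,y)$, which is exactly the defining formula $g\cdot[x,y]=[g\cdot(x,y)]$. No computation beyond the well-definedness already verified before the statement is needed for this.

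Then, given two classes $[x,y]$ and $[z,w]$ in $\C^2\setminus\{(0,0)\}/\Z_2$, I would choose any lifts $(x,y)$ and $(z,w)$ in $\C^2\setminus\{(0,0)\}$ (possible since $\pi$ is surjective). By Proposition~\ref{prop:trans} there exists $g\in G$ with $g\cdot(x,y)=(z,w)$. Applying $\pi$ and using equivariance yields
\begin{equation*}
 g\cdot[x,y]=g\cdot\pi(x,y)=\pi(g\cdot(x,y))=\pi(z,w)=[z,w],
\end{equation*}
which is the asserted transitivity.

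I do not expect any genuine obstacle here: the statement is a formal consequence of transitivity upstairs together with the tautological equivariance of the projection, and the only mildly delicate point---that the action downstairs is well defined---has already been checked. Since $-I$ acts trivially on the quotient, the same argument simultaneously yields transitivity of the induced $\bar{G}=\PSL{\C}$--action, in accordance with the observation that the action descends.
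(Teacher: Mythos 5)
Your proof is correct and follows essentially the same route as the paper, which likewise deduces transitivity on the quotient directly from Proposition~\ref{prop:trans} together with the definition of the induced action (i.e., the $G$--equivariance of the projection). Your write-up merely makes explicit the lifting-and-projecting step that the paper leaves implicit.
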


\begin{proposition}\label{prop:Ca}
The isotropy subgroup of $[1,0]$ is $P$. Therefore, there is a $G$--isomorphism
between $SL_2(\C)/P$ and $\C^2\setminus\{(0,0)\}/\Z_2$ given by 
\begin{align*}
 \SL{\C}/P &\to \C^2\setminus\{(0,0)\}/\Z_2\\
gP &\mapsto g\begin{bmatrix}
              1\\0
             \end{bmatrix}.
\end{align*}
\end{proposition}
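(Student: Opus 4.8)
The plan is to mirror the structure used for $\Xsubgr{U}$, reducing everything to a direct matrix computation of the isotropy subgroup. First I would verify that $P$ fixes the class $[1,0]$. For a matrix $\bigl(\begin{smallmatrix}\pm1 & b\\ 0 & \pm1\end{smallmatrix}\bigr)\in P$, applying it to $(1,0)^{t}$ yields $(\pm1,0)^{t}$, which represents the same class $[1,0]$ in $\C^2\setminus\{(0,0)\}/\Z_2$ because $(1,0)\sim(-1,0)$. Hence $P\subset G_{[1,0]}$.

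For the reverse inclusion, I would take an arbitrary $g=\bigl(\begin{smallmatrix}a & b\\ c & d\end{smallmatrix}\bigr)\in\SL{\C}$ fixing $[1,0]$ and unwind the definition of the action. The condition $g\cdot[1,0]=[1,0]$ means
\begin{equation*}
\begin{bmatrix} a\\ c\end{bmatrix}=\begin{bmatrix}1\\0\end{bmatrix},
\end{equation*}
i.e.\ $(a,c)\sim(1,0)$ under the $\Z_2$--action, so either $(a,c)=(1,0)$ or $(a,c)=(-1,0)$. In both cases $c=0$ and $a=\pm1$. Since $\det g=ad-bc=ad=1$ and $a=\pm1$, we get $d=a=\pm1$, so $g=\bigl(\begin{smallmatrix}\pm1 & b\\ 0 & \pm1\end{smallmatrix}\bigr)\in P$. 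This gives $G_{[1,0]}\subset P$, and together with the first step, $G_{[1,0]}=P$.

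Finally, the $G$--isomorphism follows from the general orbit--stabilizer correspondence: since $G$ acts transitively on $\C^2\setminus\{(0,0)\}/\Z_2$ (by the Corollary immediately preceding the statement) and the isotropy subgroup of the basepoint $[1,0]$ is exactly $P$, the orbit map $gP\mapsto g\cdot[1,0]$ is a well-defined $G$--equivariant bijection. Well-definedness is precisely the statement that $P$ is the stabilizer, and surjectivity is transitivity, so the map is a $G$--isomorphism onto $\C^2\setminus\{(0,0)\}/\Z_2$.

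I do not expect any serious obstacle here; this is a routine isotropy computation of the same flavour as Proposition~\ref{prop:trans} and the preceding result for $U$. The only point requiring a little care is keeping track of the $\Z_2$--identification when solving $g\cdot[1,0]=[1,0]$: one must remember to allow the representative $(-1,0)$ as well as $(1,0)$, which is exactly what forces the sign ambiguity $a=\pm1$ and thereby produces $P$ rather than $U$. Once that is handled, the determinant condition pins down $d$ and the result is immediate.
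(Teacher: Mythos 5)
Your proof is correct and follows essentially the same route as the paper: a direct computation showing $g\cdot[1,0]=[1,0]$ forces $c=0$, $a=\pm1$, and then $d=a$ from $\det g=1$, so the stabilizer is exactly $P$, with the $G$--isomorphism then following from transitivity via the orbit--stabilizer correspondence. Your version is slightly more explicit than the paper's (which omits the easy inclusion $P\subset G_{[1,0]}$ and the orbit--stabilizer remark), and your observation that the signs of $a$ and $d$ must agree is a small point the paper's ``$d=\pm1$'' glosses over.
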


\begin{proof}
Let $g=(\begin{smallmatrix}
       a & b\\ c & d
      \end{smallmatrix})\in \SL{\C}$ such that
\begin{equation*}
 \begin{pmatrix}
  a & b\\
  c & d
 \end{pmatrix}\begin{bmatrix}
               1\\0
              \end{bmatrix}=\begin{bmatrix}
                             a\\c 
                            \end{bmatrix}=\begin{bmatrix}
                                           1\\0
                                          \end{bmatrix}.
\end{equation*}
This implies that $a=\pm1$ and $c=0$, but since $\det g=1$ we have that $d=\pm1$
and therefore $g=(\begin{smallmatrix}
       \pm1 & b\\ 0 & \pm1
      \end{smallmatrix})\in P$.
\end{proof}
Therefore we can set $\Xsubgr{P}=\C^2\setminus\{(0,0)\}/\Z_2$. By
Remark~\ref{rem:XH=XbH} we also have that
$\Xsubgr{\bar{P}}=\C^2\setminus\{(0,0)\}/\Z_2$.

There is another model for  $\Xsubgr{P}$ which we learned from Ramadas
Ramakrishnan. Consider the set $\Sym$ of $2\times 2$ non-zero symmetric complex
matrices with determinant zero. The set $\Sym$ is given by matrices of the form
\begin{equation}\label{eq:Sym}
\Sym=\biggl\{\begin{pmatrix}
       x^2 & xy \\ xy & y^2
      \end{pmatrix}\,\biggm|\,(x,y)\in \Xsubgr{U}\biggr\}.
\end{equation}
Let $g\in \SL{\C}$ and $S\in\Sym$. We define an action of $G$ on $\Sym$ by
\begin{equation*}
 g\cdot S=gSg^T,
\end{equation*}
where $g^T$ is the transpose of $g$. The action is well-defined because
transpose conjugation preserves symmetry and the determinant function is a
homomorphism. Since $-I$ acts as the identity, this action descends to an action
of $\bar{G}$.

\begin{proposition}\label{prop:trans.Sym}
 The group $G$ acts transitively on $\Sym$.
\end{proposition}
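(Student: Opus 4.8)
The plan is to reduce transitivity on $\Sym$ to transitivity on $\Xsubgr{U}=\C^2\setminus\{(0,0)\}$, which is already established in Proposition~\ref{prop:trans}. The key observation is that the parametrization in \eqref{eq:Sym} is itself essentially a $G$--equivariant map. First I would define the map
\begin{equation*}
 \Phi\co \Xsubgr{U}\to\Sym,\qquad \Phi\begin{pmatrix}x\\y\end{pmatrix}=\begin{pmatrix}x^2 & xy\\ xy & y^2\end{pmatrix}=\begin{pmatrix}x\\y\end{pmatrix}\begin{pmatrix}x & y\end{pmatrix},
\end{equation*}
which by \eqref{eq:Sym} is surjective. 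Writing $\V=\left(\begin{smallmatrix}x\\ y\end{smallmatrix}\right)$, the point is that $\Phi(\V)=\V\V^T$, so for any $g\in\SL{\C}$ we have
\begin{equation*}
 g\cdot\Phi(\V)=g(\V\V^T)g^T=(g\V)(g\V)^T=\Phi(g\V).
\end{equation*}
Thus $\Phi$ intertwines the left-multiplication action on $\Xsubgr{U}$ with the action $S\mapsto gSg^T$ on $\Sym$.

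Once equivariance and surjectivity of $\Phi$ are in hand, transitivity is immediate. Given $S,S'\in\Sym$, choose preimages $\V,\V'\in\Xsubgr{U}$ with $\Phi(\V)=S$ and $\Phi(\V')=S'$; by Proposition~\ref{prop:trans} there is $g\in\SL{\C}$ with $g\V=\V'$, and then $g\cdot S=g\cdot\Phi(\V)=\Phi(g\V)=\Phi(\V')=S'$. Hence $G$ acts transitively on $\Sym$.

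I would also verify the one routine point needed to make $\Phi$ well-defined as a surjection onto $\Sym$: every element of $\Sym$ does arise from some $(x,y)\neq(0,0)$. A nonzero symmetric matrix $\left(\begin{smallmatrix}a & b\\ b & c\end{smallmatrix}\right)$ of determinant zero satisfies $ac=b^2$, and one checks directly that such a matrix factors as $\V\V^T$ for some nonzero $\V$ (taking $x,y$ to be square roots of $a,c$ with the sign chosen so that $xy=b$), so it lies in the image of $\Phi$; this is exactly the content of the presentation \eqref{eq:Sym}. I do not expect a serious obstacle here: the only thing to watch is the sign ambiguity in extracting $x$ and $y$ from $a=x^2$, $c=y^2$, but since $\Phi(\V)=\Phi(-\V)$ the two sign choices give the same matrix, so the factorization exists and the argument goes through cleanly. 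The heart of the proof is the single equivariance identity $g(\V\V^T)g^T=(g\V)(g\V)^T$, which transports the already-proved transitivity from $\Xsubgr{U}$ to $\Sym$.
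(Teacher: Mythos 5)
Your proof is correct, but it takes a genuinely different route from the paper. The paper proves transitivity on $\Sym$ the same way it proved Proposition~\ref{prop:trans}: by an explicit case analysis ($x\neq0$ with $w\neq0$ or $w=0$; $x=0$) that writes down, in each case, a concrete matrix $g'\in\SL{\C}$ carrying $\bigl(\begin{smallmatrix}x^2 & xy\\ xy & y^2\end{smallmatrix}\bigr)$ to $\bigl(\begin{smallmatrix}z^2 & zw\\ zw & w^2\end{smallmatrix}\bigr)$; a remark following the proof observes that these $g'$ are exactly the negatives of the matrices $g$ from Proposition~\ref{prop:trans}. You instead transport transitivity along the surjection $\Phi(\V)=\V\V^T$ from $\Xsubgr{U}$ onto $\Sym$, using the one-line equivariance identity $g(\V\V^T)g^T=(g\V)(g\V)^T$ — which is precisely the conceptual explanation of the paper's remark that the matrices $g$ from Proposition~\ref{prop:trans} already do the job. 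Your factorization check (including the sign ambiguity, harmless since $\Phi(\V)=\Phi(-\V)$) is sound, and in fact your $\Phi$ is the composite $\varrho\circ h_U^P$ that the paper only introduces later: the equivariance computation you perform is essentially the one appearing in the proof of Corollary~\ref{cor:iso.P}. What each approach buys: your argument is shorter, avoids redoing the case analysis, and anticipates the $G$--isomorphism $\Xsubgr{P}\cong\Xsp$; the paper's computation yields explicit transition matrices, which is what makes the subsequent remark relating $g'$ and $g$ possible and keeps the section self-contained at the level of formulas. Note that your proof is not more elementary in substance — it quotes Proposition~\ref{prop:trans}, whose proof is the same explicit construction — but it cleanly isolates where the matrix computations live.
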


\begin{proof}
 Let $(\begin{smallmatrix}
        x^2 & xy\\ xy & y^2
       \end{smallmatrix})$ and 
$(\begin{smallmatrix}
          z^2 & zw\\ zw & w^2
         \end{smallmatrix})$ 
be elements of $\Sym$. Then the matrix
$g'=(\begin{smallmatrix}
  a & b\\c & d
 \end{smallmatrix})\in SL_2(\C)
$ which send 
$(\begin{smallmatrix}
        x^2 & xy\\ xy & y^2
       \end{smallmatrix})$
to 
$(\begin{smallmatrix}
          z^2 & zw\\ zw & w^2
         \end{smallmatrix})$
is given by:
\begin{description}
 \item[If $x\neq0$] we have two cases:
\begin{description}
 \item[$w\neq0$]
\begin{equation*}
 a=-\frac{zw+xy}{xw},\quad b=\frac{x}{w},\quad c=-\frac{w}{x},\quad d=0.
d=\text{any complex number}
\end{equation*}
That is
\begin{equation*}
g'=\begin{pmatrix}
  -\frac{zw+xy}{xw} & \frac{x}{w}\\[5pt] -\frac{w}{x} & 0
 \end{pmatrix}
\end{equation*}

\item[If $w=0$] which implies that $z\neq0$
\begin{equation*}
a=-\frac{z}{x},\quad c=\frac{y}{z},\quad d=-\frac{x}{z},\quad b=0
\end{equation*}
That is
\begin{equation*}
g'=\begin{pmatrix}
 -\frac{z}{x} & 0\\[5pt] \frac{y}{z} &-\frac{x}{z}
\end{pmatrix}
\end{equation*}
\end{description}
\item[If $x=0$] which implies that $y\neq0$, then we have
\begin{equation*}
 b=-\frac{z}{y},\quad d=-\frac{w}{y},\quad a=\begin{cases}
					   0 & \text{if $z\neq0$,}\\
					   -\frac{y}{w} & \text{if $w\neq0$.}
					  \end{cases}
\quad c=\begin{cases}
         \frac{y}{z} & \text{if $z\neq0$,}\\
	 0 & \text{if $w\neq0$.}
      \end{cases}
\end{equation*}
That is
\begin{align*}
g'&= \begin{pmatrix}
  0 & -\frac{z}{y}\\[5pt] \frac{y}{z} & -\frac{w}{y}
 \end{pmatrix} &&\text{if $z\neq0$,}\\[5pt]
g'&= \begin{pmatrix}
     -\frac{y}{w} & -\frac{z}{y}\\[5pt] 0 & -\frac{w}{y}
    \end{pmatrix}&&\text{if $w\neq0$.}
\end{align*}
\end{description}
\end{proof}

\begin{remark}
 Notice that the matrices $g'$ found in the proof of
Proposition~\ref{prop:trans.Sym} are the negatives of the corresponding matrices
$g$ found in the proof of Proposition~\ref{prop:trans}, that is $g'=-g$. In the
proof of Proposition~\ref{prop:trans.Sym} one can also take the corresponding
matrices $g$.
\end{remark}

\begin{proposition}\label{prop:Sym}
 The isotropy subgroup of $(\begin{smallmatrix}
                             1 & 0\\ 0 &0
                            \end{smallmatrix})\in\Sym$ is $P$. Therefore, there
is a $G$--isomorphism between $\SL{\C}/P$ and $\Sym$ given by 
\begin{align*}
 \SL{\C}/P &\to \Sym\\
gP &\mapsto g\begin{pmatrix}
              1 & 0\\0 &0
             \end{pmatrix}g^T.
\end{align*}
\end{proposition}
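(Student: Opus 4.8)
The plan is to follow the same pattern as the proof of Proposition~\ref{prop:Ca}: first pin down the stabilizer of the base point $S_0=\bigl(\begin{smallmatrix}1&0\\0&0\end{smallmatrix}\bigr)$ by a direct matrix computation, and then feed the transitivity statement of Proposition~\ref{prop:trans.Sym} into the orbit--stabilizer correspondence to promote the orbit map into a $G$--isomorphism. Note that $S_0$ is exactly the image of $(1,0)$ under the parametrization $(x,y)\mapsto\bigl(\begin{smallmatrix}x^2&xy\\xy&y^2\end{smallmatrix}\bigr)$ of $\Sym$ recorded in \eqref{eq:Sym}, so this is the natural base point to match the models of $\Xsubgr{P}$ used earlier.

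First I would take an arbitrary $g=\bigl(\begin{smallmatrix}a&b\\c&d\end{smallmatrix}\bigr)\in\SL{\C}$ and compute the action $g\cdot S_0=gS_0g^T$. Since $gS_0=\bigl(\begin{smallmatrix}a&0\\c&0\end{smallmatrix}\bigr)$, one obtains $gS_0g^T=\bigl(\begin{smallmatrix}a^2&ac\\ac&c^2\end{smallmatrix}\bigr)$. Imposing $gS_0g^T=S_0$ then yields the three scalar equations $a^2=1$, $ac=0$, and $c^2=0$. The equation $c^2=0$ forces $c=0$; then $a^2=1$ forces $a=\pm1$; and since $\det g=ad-bc=ad=1$ with $a=\pm1$ and $c=0$, we must have $d=a^{-1}=\pm1$ with the same sign as $a$, while $b$ remains arbitrary. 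Hence the stabilizer of $S_0$ consists precisely of the matrices $\bigl(\begin{smallmatrix}\pm1&b\\0&\pm1\end{smallmatrix}\bigr)$, i.e.\ it equals $P$.

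Finally I would invoke Proposition~\ref{prop:trans.Sym}, which says $G$ acts transitively on $\Sym$, so the orbit of $S_0$ is all of $\Sym$. By the orbit--stabilizer correspondence the orbit map $g\mapsto gS_0g^T$ descends to a well-defined $G$--equivariant bijection $\SL{\C}/P\to\Sym$, $gP\mapsto gS_0g^T$, which is the claimed isomorphism. I do not anticipate any real obstacle here: the whole argument collapses to the single stabilizer computation above, and the only point to keep straight is the bookkeeping of signs in the determinant condition (ensuring $d$ carries the same sign as $a$) so that the stabilizer comes out as $P$ rather than a larger or smaller subgroup.
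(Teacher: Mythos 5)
Your proposal is correct and matches the paper's own argument: the paper likewise computes $g\bigl(\begin{smallmatrix}1&0\\0&0\end{smallmatrix}\bigr)g^T=\bigl(\begin{smallmatrix}a^2&ac\\ac&c^2\end{smallmatrix}\bigr)$, deduces $c=0$, $a=\pm1$, $d=\pm1$ from $\det g=1$, and combines this with the transitivity established in Proposition~\ref{prop:trans.Sym}. Your explicit remark that $d$ must carry the same sign as $a$ is a slight sharpening of the paper's phrasing, but the route is identical.
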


\begin{proof}
 Let $g=(\begin{smallmatrix}
       a & b\\ c & d
      \end{smallmatrix})\in \SL{\C}$ such that
\begin{equation*}
 \begin{pmatrix}
  a & b\\ c & d
 \end{pmatrix}\begin{pmatrix}
               1 & 0\\ 0 & 0
              \end{pmatrix}\begin{pmatrix}
                            a & c\\ b & d
                           \end{pmatrix}=\begin{pmatrix}
                                          a^2 & ac \\ ac & c^2
                                         \end{pmatrix}=\begin{pmatrix}
                                                        1 & 0\\ 0 & 0
                                                       \end{pmatrix}.
\end{equation*}
This implies that $a=\pm1$, $c=0$ and $d=\pm1$ since $\det g=1$. Thus
$g=(\begin{smallmatrix}
\pm1 & b\\ 0 &\pm1
\end{smallmatrix})\in P$ as claimed.
\end{proof}
We denote $\Xsp=\Sym$ to distinguish it from $\Xsubgr{P}$. By
Remark~\ref{rem:XH=XbH} we have that $\Sym$ is also a model for
$\bar{G}/\bar{P}$. We use the notation $\Xspb=\Sym$ to distinguish it from
$\Xsubgr{\bar{P}}$ and emphasize the action of $\bar{G}$.

\begin{corollary}\label{cor:iso.P}
 The sets $\Xsubgr{P}$ and $\Xsp$ are isomorphic as $G$--sets. 
\end{corollary}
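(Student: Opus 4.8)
The plan is to exploit that both $\Xsubgr{P}$ and $\Xsp$ have already been identified, as $G$--sets, with the homogeneous space $\SL{\C}/P$. Indeed, Proposition~\ref{prop:Ca} furnishes a $G$--isomorphism $\SL{\C}/P\to\Xsubgr{P}$ sending $gP\mapsto g[1,0]$, while Proposition~\ref{prop:Sym} furnishes a $G$--isomorphism $\SL{\C}/P\to\Xsp$ sending $gP\mapsto g\bigl(\begin{smallmatrix}1&0\\0&0\end{smallmatrix}\bigr)g^T$. Composing the first with the inverse of the second would already yield a $G$--isomorphism $\Xsubgr{P}\to\Xsp$ and prove the corollary. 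This is the shortest route and requires no further computation.

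For a more explicit and illuminating description I would instead write the isomorphism down directly. The plan is to define
\begin{align*}
 \Phi\co\Xsubgr{P}=\C^2\setminus\{(0,0)\}/\Z_2 &\to\Sym=\Xsp\\
 [x,y] &\mapsto \begin{pmatrix} x^2 & xy\\ xy & y^2\end{pmatrix}.
\end{align*}
First I would check that $\Phi$ is well defined on $\Z_2$--orbits: replacing $(x,y)$ by its antipode $(-x,-y)$ leaves each entry $x^2,\ xy,\ y^2$ unchanged. By the description \eqref{eq:Sym} of $\Sym$, every element of $\Sym$ arises in exactly this way, so $\Phi$ is surjective; and since the three quantities $x^2,y^2,xy$ determine $(x,y)$ up to a common sign, $\Phi$ is injective. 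Hence $\Phi$ is a bijection.

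It remains to verify $G$--equivariance, which is the only step requiring a genuine argument. The key observation is the rank--one factorization: writing $v=\bigl(\begin{smallmatrix}x\\y\end{smallmatrix}\bigr)$ we have $\Phi([x,y])=vv^T$. Since $G$ acts on $\Sym$ by $S\mapsto gSg^T$, this gives
\begin{equation*}
 g\cdot\Phi([x,y])=g\,vv^T\,g^T=(gv)(gv)^T=\Phi\bigl(g\cdot[x,y]\bigr),
\end{equation*}
the last equality holding because $g\cdot[x,y]=[gv]$ by the definition of the action on $\Xsubgr{P}$. Thus $\Phi$ intertwines the two actions and is the desired $G$--isomorphism. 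I do not anticipate any real obstacle: the entire content is the factorization $S=vv^T$, which makes the quadratic action on pairs and the conjugation action on symmetric matrices visibly compatible.
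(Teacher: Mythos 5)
Your proposal is correct and follows essentially the same route as the paper: the paper also deduces the corollary immediately from Propositions~\ref{prop:Ca} and~\ref{prop:Sym} and then writes down exactly your map $[x,y]\mapsto\bigl(\begin{smallmatrix}x^2&xy\\xy&y^2\end{smallmatrix}\bigr)$ as the explicit $G$--isomorphism, verifying equivariance by expanding the matrix product directly. Your rank--one factorization $\Phi([x,y])=vv^T$, giving $g(vv^T)g^T=(gv)(gv)^T$, is a slightly cleaner way to see the same equivariance computation, but it is the identical map and argument in substance.
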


\begin{proof}
 This is immediate from Propositions~\ref{prop:Ca} and \ref{prop:Sym}. We can
give an explicit isomorphism by using the transitivity of the actions of $G$ on
each of these $G$--sets. Let $[x,y]\in\Xsubgr{P}$, then
\begin{equation*}
 \begin{bmatrix}
  x\\ y
 \end{bmatrix}=\begin{pmatrix}
                x & z \\ y & w
               \end{pmatrix}\begin{bmatrix}
                             1 \\ 0
                            \end{bmatrix},
\end{equation*}
where $z$ and $w$ are complex numbers such that $xw-yz=1$. On the other hand, we
have
\begin{equation*}
 \begin{pmatrix}
  x^2 & xy \\ xy & y^2
 \end{pmatrix}=\begin{pmatrix}
                x & z\\ y & w
               \end{pmatrix}\begin{pmatrix}
                             1 & 0\\ 0 & 0
                            \end{pmatrix}\begin{pmatrix}
                                          x & y \\ z & w
                                         \end{pmatrix}
\end{equation*}
We define the isomorphism by
\begin{equation}\label{eq:iso.P}
\begin{split}
\varrho\co\Xsubgr{P} &\to \Xsp\\
\begin{bmatrix}
 x\\y
\end{bmatrix}
&\leftrightarrow \begin{pmatrix}
                      x^2 & xy \\ xy & y^2
                     \end{pmatrix}.
\end{split}
\end{equation}
It is equivariant because
\begin{equation*}
 \begin{pmatrix}
  a & b \\ c & d
 \end{pmatrix}\begin{bmatrix}
               x\\y
              \end{bmatrix}=\begin{bmatrix}
                             ax+by\\cx+dy
                            \end{bmatrix}
\end{equation*}
is sent by the isomorphism to
\begin{equation*}
 \begin{pmatrix}
(ax+by)^2 & (ax+by)(cx+dy)\\
(ax+by)(cx+dy) & (cx+dy)^2
\end{pmatrix}=\begin{pmatrix}
		a & b \\ c & d
	      \end{pmatrix}\begin{pmatrix}
                             x^2 & xy \\ xy & y^2
                            \end{pmatrix}\begin{pmatrix}
                                          a & c\\ b & d
                                         \end{pmatrix}.
\end{equation*}
 \end{proof}

\subsection{The $G$--set $\Xsubgr{B}$}\label{ssec:B}

Let $\widehat{\C}=\C\cup\{\infty\}$ be the Riemann sphere. Let
$\mathrm{LF}(\widehat{\C})$ be the group of fractional linear transformations on
$\widehat{\C}$. Let
\begin{align*}
 \phi\co G=\SL{\C}&\to \mathrm{LF}(\widehat{\C})\\
g=\begin{pmatrix}
 a & b\\
c & d
\end{pmatrix} & \mapsto \phi(g)(z)=\frac{az+b}{cz+d},
\end{align*}
be the canonical homomorphism. Let $g\in G$ act on $\widehat{\C}$ by the
corresponding fractional linear transformation $\phi(g)$. It is well-known that
this action is transitive. Abusing of the notation, given $g\in G$ and
$z\in\widehat{\C}$ we denote the action by $g\cdot z=\phi(g)(z)$.

We can also identify $\mathbb{CP}^1$ with $\widehat{\C}$ via
$[z_1:z_2]\leftrightarrow\frac{z_1}{z_2}$, where $[z_1:z_2]\in\mathbb{CP}^1$ is
written in homogeneous coordinates. In this case an element
$g=(\begin{smallmatrix}
    a& b\\ c&d                                                                  
\end{smallmatrix})\in G$ acts in
an element $[z_1:z_2]$ in $\mathbb{CP}^1$ by matrix multiplication
\begin{equation*}
 g\cdot [z_1:z_2]=\begin{pmatrix}
 a & b\\
c & d
\end{pmatrix}\begin{pmatrix}
   z_1\\ z_2
  \end{pmatrix}=[az_1+bz_2:cz_1+dz_2].
\end{equation*}

\begin{proposition}\label{prop:S2}
 The isotropy subgroup of $\infty\in\widehat{\C}$ is $B$. Therefore, there is a
$G$--isomorphism between $\SL{\C}/B$ and $\widehat{\C}$ given by 
\begin{align*}
 \SL{\C}/B &\to \widehat{\C}\\
gB &\mapsto g\cdot \infty.
\end{align*}
\end{proposition}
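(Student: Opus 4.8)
The plan is to identify the isotropy subgroup of $\infty$ by a direct computation and then to invoke the transitivity of the action together with the orbit-stabilizer correspondence, exactly as in the proofs of Propositions~\ref{prop:Ca} and~\ref{prop:Sym}.

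First I would compute $g\cdot\infty$ for an arbitrary $g=\matr{a}{b}{c}{d}\in G$. It is cleanest to use the homogeneous-coordinate description, writing $\infty=[1:0]\in\mathbb{CP}^1$; then $g\cdot[1:0]=[a:c]$, which equals $[1:0]$ if and only if $c=0$. Equivalently, from the fractional linear transformation $\phi(g)(z)=\frac{az+b}{cz+d}$ one has $\phi(g)(\infty)=a/c$ with the convention $a/0=\infty$, so $\phi(g)$ fixes $\infty$ precisely when $c=0$.

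Next I would observe that a matrix $\matr{a}{b}{c}{d}\in\SL{\C}$ with $c=0$ satisfies $ad=1$, hence $d=a^{-1}$ with $a\in\C^\times$, so it is exactly a general element of $B$. This shows that the isotropy subgroup of $\infty$ is $G_\infty=B$.

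Finally, since the action of $G$ on $\widehat{\C}$ is transitive, the assignment $gB\mapsto g\cdot\infty$ gives a well-defined $G$--equivariant bijection $\SL{\C}/B\to\widehat{\C}$ by the orbit-stabilizer correspondence, which is the asserted $G$--isomorphism. There is no real obstacle here; the only point requiring a little care is the bookkeeping at the point at infinity, which is handled transparently in homogeneous coordinates.
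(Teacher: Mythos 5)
Your proof is correct and follows essentially the same route as the paper: compute the action on $\infty=[1:0]$ in homogeneous coordinates, identify the stabilizer as $B$, and conclude via transitivity and the orbit--stabilizer correspondence. In fact you are slightly more complete than the paper, which verifies only that elements of $B$ fix $[1:0]$ and leaves the converse ($c=0$ forces $d=a^{-1}$, hence membership in $B$) as ``easy to see,'' whereas you carry it out explicitly.
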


\begin{proof}
 Let $g=(\begin{smallmatrix}
       a & b\\ 0 & a^{-1}
      \end{smallmatrix})\in B$. Then
\begin{equation*}
g\cdot[1:0]= \begin{pmatrix}
       a & b\\ 0 & a^{-1}
      \end{pmatrix}\begin{pmatrix}
   1\\ 0
  \end{pmatrix}=\begin{pmatrix}
   a\\ 0
  \end{pmatrix}=[a:0]=[1:0].
\end{equation*}
It is easy to see that all the elements in $G$ that fix $[1:0]$ are of this
form, \ie that they are elements in $B$.
\end{proof}
Therefore we set $\Xsubgr{B}=\widehat{\C}$. Again, by Remark~\ref{rem:XH=XbH} we
also have that $\Xsubgr{\bar{B}}=\widehat{\C}$.

\subsection{The explicit $G$--maps}

The inclusions
\begin{equation}\label{eq:incl}
 I\hookrightarrow U \hookrightarrow P \hookrightarrow B
\end{equation}
induce $G$--maps
\begin{gather*}
 G \to G/U \to G/P \to G/B\\
g \mapsto gU \mapsto gP \mapsto gB.
\end{gather*}
Using the models $\Xsubgr{H}$ for the $G$--sets $G/H$ with $H=U,P,B$ given in
the previous subsections we give the  explicit $G$--maps between them. 

Let $g=(\begin{smallmatrix}
a & b \\ c &d                            
\end{smallmatrix})\in G$. We have the $G$--maps
\begin{equation}\label{eq:h.hHK}
\xymatrix{
G \ar[r]^{h_I^U} & \Xsubgr{U} \ar[r]^{h^P_U} & \Xsubgr{P} \ar[r]^{h^B_P} &
\Xsubgr{B}\\
g={\begin{pmatrix}
   a & b \\ c & d
  \end{pmatrix}}
 \ar@{|->}[r] & g{\begin{pmatrix}
                         1\\0
                   \end{pmatrix} = \begin{pmatrix}
                                         a\\c
                                        \end{pmatrix}}\ar@{|->}[r] &g\cdot
{\begin{bmatrix}
1\\0
\end{bmatrix} = \begin{bmatrix}
a\\c
\end{bmatrix}}\ar@{|->}[r] & 
g\cdot [1:0]=g\cdot\infty=\frac{a}{c}.
}
\end{equation}
Notice that $h_U^P\co \Xsubgr{U}\to\Xsubgr{P}$ is just the quotient map given by
the action of $\Z_2$. On the other hand, we have that
\begin{equation}\label{eq:UB}
h_U^B=h_P^B\circ h_U^P,
\end{equation}
where $h_U^B$ is the Hopf map
\begin{gather}
 h_U^B\co\Xsubgr{U}\to\Xsubgr{B}\notag\\
h_U^B(a,c)=\frac{a}{c}.\label{eq:hopf}
\end{gather}
Using $\Xsp$ instead of $\Xsubgr{P}$ we have
\begin{equation}\label{eq:bar.hHK}
\xymatrix{
G \ar[r]^{h_I^U} & \Xsubgr{U} \ar[r]^{\bar{h}_U^P} & \Xsp \ar[r]^{\bar{h}_P^B} &
\Xsubgr{B}\\
{\begin{pmatrix}
   a & b \\ c & d
  \end{pmatrix}}
 \ar@{|->}[r] &(a,c) \ar@{|->}[r] & {\begin{pmatrix}
                       a^2 & ac\\
			ac & c^2
                      \end{pmatrix}} \ar@{|->}[r] &
\frac{a^2}{ac}=\frac{ac}{c^2}=\frac{a}{c}.\\
}
\end{equation}
We have that 
\begin{equation*}\label{eq:bar.UB}
 \bar{h}_P^B\circ \bar{h}_U^P=h_U^B.
\end{equation*}
It is also useful to write the $G$--map $\bar{h}_U^P\co\Xsp\to\Xsubgr{B}$ in
terms of the entries
of the matrix in $\Xsp$ without writing it in the form given in \eqref{eq:Sym}.
Let $(\begin{smallmatrix}
   r & t\\ t & s                                                                
\end{smallmatrix}
)\in\Xsp$, that is, $(\begin{smallmatrix}
   r & t\\ t & s                                                                
\end{smallmatrix})
\neq (\begin{smallmatrix}
        0 & 0\\0 & 0
       \end{smallmatrix})$ and $rs=t^2$. Then we have
\begin{align*}
\bar{h}_U^P\co \Xsp &\to \Xsubgr{B}\\[5pt]
\begin{pmatrix}
  r & t\\
  t & s
\end{pmatrix} &\mapsto  \frac{r}{t}=\frac{t}{s}.
\end{align*}

\begin{remark}\label{rem:h.bhHK}
For the case of $\bar{G}$ we have practically the same $\bar{G}$--homomorphisms
as in \eqref{eq:h.hHK} and \eqref{eq:bar.hHK} except that
$\Xsubgr{\bar{U}}=\Xsubgr{\bar{P}}$. 
\end{remark}

\begin{remark}
Consider $\infty\in\Xsubgr{B}$ and its inverse image under the Hopf map
\eqref{eq:hopf}
\begin{equation*}
 (h_U^B)^{-1}(\infty)=\sets{(x,0)}{x\in\C^\times}\subset\Xsubgr{U},
\end{equation*}
which corresponds to the first coordinate complex line minus the origin. Since
by Proposition~\ref{prop:S2} the isotropy subgroup of $\infty\in\Xsubgr{B}$
under the action of $G$ is $B$, we have that $(h_U^B)^{-1}(\infty)$ is a
$B$-invariant subset of $\Xsubgr{U}$. Since the short exact sequence
\eqref{eq:es.UBT} splits, any element of $B$ can be written in a unique way as
the product of an element in $U$ and an element in $T$
\begin{equation}\label{eq:B=UT}
 \begin{pmatrix}
a & b\\
0 & a^{-1}
 \end{pmatrix}=\begin{pmatrix}
                1 & ab\\
                0 & 1
               \end{pmatrix}\begin{pmatrix}
                             a & 0\\
                             0 & a^{-1}
                            \end{pmatrix}.
\end{equation}
It is easy to see that $U$ fixes pointwise the points of $(h_U^B)^{-1}(\infty)$,
while $T$ acts freely and transitively on $(h_U^B)^{-1}(\infty)$ where the
matrix 
$\big(\begin{smallmatrix}
 a & b\\
0 & a^{-1}                                                                      
\end{smallmatrix}\bigr)$ acts multiplying $(x,0)$ by $a\in\C^\times$.

Another way to interpret this is to write $\infty\in\Xsubgr{B}$ in homogeneous
coordinates $[\lambda:0]$, then the elements in $U$ fix $\infty$ and the
homogeneous coordinates $[\lambda:0]$, while an element in
$\big(\begin{smallmatrix}
 a & b\\
0 & a^{-1}                                                                      
\end{smallmatrix}\bigr)\in T$ fix $\infty$ but multiply the  homogeneous
coordinates by $a\in\C^\times$ obtaining the homogeneous coordinates
$[a\lambda:0]$.

More generally, for any point $z\in\Xsubgr{B}$, its isotropy subgroup $G_z$ is a
conjugate of $B$, which can be written as the direct product of the
corresponding conjugates of $U$ and $T$, which we denote by $U_z$ and $T_z$.
Writing $z\in\Xsubgr{B}$ in homogeneous coordinates $[\lambda z:\lambda]$ the
elements of $U_z$ fix $z$ and the homogeneous coordinates, while the elements of
$T_z$ fix $z$ but multiply the homogeneous coordinates by a constant.
\end{remark}

\begin{remark}\label{rem:im.inv}
Analogously, consider $\infty\in\Xsubgr{B}$ and its inverse image under the
$G$--map $h_P^B$
\begin{equation*}
 (h_P^B)^{-1}(\infty)=\sets{[x,0]}{x\in\C^\times}\subset\Xsubgr{P}.
\end{equation*}
By \eqref{eq:B=UT} any element of $P$ can be written in a unique way as the
product of an element in $U$ and an element in $T$
\begin{equation}\label{eq:P=UT}
 \begin{pmatrix}
\pm1 & b\\
0 & \pm1
 \end{pmatrix}=\begin{pmatrix}
                1 & \pm b\\
                0 & 1
               \end{pmatrix}\begin{pmatrix}
                             \pm1 & 0\\
                             0 & \pm1
                            \end{pmatrix}.
\end{equation}
Given a representative $(x,0)$ of $[x,0]\in (h_P^B)^{-1}(\infty)$, the matrix
$\bigl(\begin{smallmatrix}
-1 & 0\\ 0 & -1
\end{smallmatrix}\bigr)\in T$
changes the sign of the representative to $(-x,0)$ but fixes its class $[x,0]$
and the matrix $\bigl(\begin{smallmatrix}
         1 & \pm b\\ 0 & 1
      \end{smallmatrix}\bigr)\in U$ fixes any representative of $[x,0]$, thus it
fixes the class itself. Therefore. the elements in $P$ fix pointwise the points
in $(h_P^B)^{-1}(\infty)$ while $T$ acts transitively on $(h_P^B)^{-1}(\infty)$
with isotropy $\Z_2$, where the matrix 
$\bigl(\begin{smallmatrix}
 a & b\\
0 & a^{-1}                                                                      
\end{smallmatrix}\bigr)$ acts multiplying $[x,0]$ by $a\in\C^\times$ obtaining
$[ax,0]$.

If we use instead $\bar{h}_P^B$ the inverse image of $\infty\in\Xsubgr{B}$ is
given by
\begin{equation*}
{(\bar{h})_P^B}^{-1}(\infty)=\biggl\{\begin{pmatrix}
                                   x & 0\\ 0 & 0
                                   \end{pmatrix} \biggm|
x\in\C^\times\biggr\}\subset\Xsp.
\end{equation*}
The elements in $P$ fix pointwise the points in $(\bar{h}_P^B)^{-1}(\infty)$
while $T$ acts transitively on $(\bar{h}_P^B)^{-1}(\infty)$ with isotropy
$\Z_2$, where the matrix 
$\big(\begin{smallmatrix}
 a & b\\
0 & a^{-1}                                                                      
\end{smallmatrix}\bigr)$ acts multiplying $\bigl(\begin{smallmatrix}
                  x & 0\\ 0 & 0
                 \end{smallmatrix}\bigr)$ by $a^2$ with $a\in\C^\times$obtaining
$\bigl(\begin{smallmatrix}
                  a^2x & 0\\ 0 & 0
                 \end{smallmatrix}\bigr)$.

More generally, for any point $z\in\Xsubgr{B}$, its isotropy subgroup $G_z$ is a
conjugate of $B$, and let $P_z$ denote the corresponding conjugate of $P$. The
elements of $P_z$ fix pointwise the points in $(\bar{h}_P^B)^{-1}(z)$ while
$T_z$ acts transitively on $(\bar{h}_P^B)^{-1}(z)$ multiplying by a constant.
\end{remark}

\subsection{Canonical homomorphisms}

As in Remark~\ref{rem:fam.H}, we denote by $\fami(H)$ the family of
subgroups of $G$ generated by $H$. The inclusions \eqref{eq:incl} induce the
inclusions of
families of subgroups of $G$
\begin{equation*}
 \fami(I)\hookrightarrow \fami(U) \hookrightarrow \fami(P) \hookrightarrow
\fami(B)
\end{equation*}
and in turn, these inclusion give canonical $G$--maps between classifying spaces
\begin{equation}\label{eq:G.maps}
 EG\to \EGF[\fami(U)]{G} \to \EGF[\fami(P)]{G}\to \EGF[\fami(B)]{G}
\end{equation}
which are unique up to $G$--homotopy. Taking the quotient by the action of $G$
we
get canonical maps
\begin{equation*}
 BG\to \BGF[\fami(U)]{G} \to \BGF[\fami(P)]{G}\to \BGF[\fami(B)]{G}.
\end{equation*}
The homomorphisms induced in homology give the sequence
\begin{equation*}
H_i(BG)\to H_i(\BGF[\fami(U)]{G}) \to H_i(\BGF[\fami(P)]{G})\to
H_i(\BGF[\fami(B)]{G}),
\end{equation*}
which by Proposition~\ref{prop:CS.HRGH} is the same as the sequence of
homomorphisms
\begin{equation*}
H_i(G)\xrightarrow{(h_I^U)_*} H_i([G:U])\xrightarrow{(h_U^P)_*}
H_i([G:P])\xrightarrow{(h_P^B)_*} H_i([G:B]).
\end{equation*}

Recall that we denote by $\bar{G}=G/\pm I=\PSL{\C}$ and given a subgroup $H$ of
$G$ we denote by $\bar{H}$ the image of the $H$ in $\bar{G}$. Notice that
$\bar{U}=\bar{P}$.  Analogously, the inclusions
$\bar{I}\hookrightarrow \bar{P} \hookrightarrow \bar{B}$
induce a 
sequence of homomorphisms
\begin{equation*}
H_i(\bar{G})\to H_i([\bar{G}:\bar{P}])\to H_i([\bar{G}:\bar{B}]).
\end{equation*}

The relation between the coset sets of $G$ and $\bar{G}$ can be shown in the
following diagram
\begin{equation*}
\xymatrix{
G \ar[r]\ar[dr]& G/U \ar[r]& G/P=\bar{G}/\bar{P} \ar[r]& G/B=\bar{G}/\bar{B}\\
 & \bar{G}=G/\pm I\ar[r]\ar[ru] & \bar{G}/\bar{T}=G/T.\ar[ur] &
}
\end{equation*}
In turn, by Proposition~\ref{prop:3.IT} this induces the following commutative
diagram of relative homology groups
\begin{equation}\label{eq:hom.diag}
\xymatrix{
H_i(G) \ar[r]\ar[dr]& H_i([G:U]) \ar[r]& H_i([G:P]) \ar[r]& H_i([G:B])\\
 & H_i(\bar{G})\ar[r]\ar[ru] & H_i([G:T]).\ar[ur] &
}
\end{equation}

\subsection{Takasu relative homology of $\SL{\C}$}

The following proposition states that for the cases of $(G,U)$ and $(\bar{G},\bar{P})$ 
Takasu relative homology can be computed using Theorem~\ref{thm:hHK.Takasu},
compare with \cite[Remark.~3.6]{Dupont-Zickert:DFCCSC} and
\cite[\S7]{Zickert:VCSIR}). 

\begin{lemma}\label{lem:UgUg}
Let $g\notin B$. Then
\begin{align*}
U\cap gUg^{-1}&=I.\\
P\cap gPg^{-1}&=\pm I.
\end{align*}
Hence, $U$ is a $B$-malnormal subgroup of $\SL{\C}$ and $\bar{P}$ is a $\bar{B}$-malnormal subgroup of $\PSL{\C}$.
\end{lemma}

\begin{proof}
Since $g\notin B$ by Bruhat decomposition we have that $g$ can be written as
\begin{equation*}
 g=g_1wg_2,\quad g_1,g_2\in B.
\end{equation*}
Then we have that
\begin{equation*}
 gUg^{-1}=g_1wg_2Ug_2^{-1}w^{-1}g_1^{-1}=g_1wUw^{-1}g_1^{-1}.
\end{equation*}
Let us analyze the elements in $g_1wUw^{-1}g_1^{-1}$. Consider
$h=\left(\begin{smallmatrix}
1 & e\\ 0 & 1
\end{smallmatrix}\right)\in U$ and
$g_1=\left(\begin{smallmatrix}
            a & b\\ 0 & a^{-1}
           \end{smallmatrix}\right)\in B$, so $a\in\C^\times$. We have that
\begin{equation*}
g_1whw^{-1}g_1^{-1}=\begin{pmatrix}
                    1+a^{-1}be & -b^2e\\ a^{-2}e & 1- a^{-1}be
                   \end{pmatrix}.
\end{equation*}
The only way to have $g_1whw^{-1}g_1^{-1}\in U$ is to have $e=0$ and in that
case $g_1whw^{-1}g_1^{-1}=I$.
Analogously if $h=\left(\begin{smallmatrix}
                \pm1 & e\\ 0 & \pm1
               \end{smallmatrix}\right)\in P$ then
\begin{equation*}
 g_1whw^{-1}g_1^{-1}=\begin{pmatrix}
                    \pm1+a^{-1}be & -b^2e\\ a^{-2}e & \pm1- a^{-1}be
                   \end{pmatrix},
\end{equation*}
and the only way to have $g_1whw^{-1}g_1^{-1}\in P$ is to have $e=0$ and in that
case $g_1whw^{-1}g_1^{-1}=\pm I$.

\end{proof}

Consider the $h_U^B$--subcomplex $\CC{C}^{h_U^B\neq}(\Xsubgr{U})$ and the
$h_{\bar{P}}^{\bar{B}}$--subcomplex
$\CC{C}^{h_{\bar{P}}^{\bar{B}}\neq}(\Xsubgr{\bar{P}})$ defined in
Subsection~\ref{sssec:free}. 
\begin{proposition}\label{prop:tak.hUB.sub}
We have isomorphisms
\begin{align*}
 H_n(G,U;\Z)&\cong H_n(\CC{B}^{h_U^B\neq}(\Xsubgr{U})),\quad n=2,3,\dots.\\
H_n(\bar{G},\bar{P};\Z)&\cong
H_n(\CC{B}^{h_{\bar{P}}^{\bar{B}\neq}}(\Xsubgr{\bar{P}})),\quad n=2,3,\dots.
\end{align*}
\end{proposition}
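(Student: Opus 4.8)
The plan is to derive both isomorphisms as immediate instances of Proposition~\ref{prop:hHK.Gfree}. For the first I would apply that proposition with $G=\SL{\C}$, $H=U$ and $K=B$; for the second, with the group $\bar G=\PSL{\C}$, $H=\bar P$ and $K=\bar B$. In each case the $h_H^K$--subcomplex produced by Proposition~\ref{prop:hHK.Gfree} is exactly the complex named in the statement, so the whole argument reduces to checking the three hypotheses of that proposition for these two pairs of subgroups.

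The first two hypotheses are essentially immediate. That $H$ is conjugate to a subgroup of $K$ holds because $U\subset B$ and $\bar P\subset\bar B$ by the inclusions \eqref{eq:incl}, so the conjugating element may be taken to be the identity. That $K$ has infinite index in $G$ follows from Proposition~\ref{prop:S2}: we have $G/B\cong\widehat{\C}$ and $\bar G/\bar B\cong\widehat{\C}$, and the Riemann sphere is an infinite set.

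The remaining hypothesis, that $H\cap gHg^{-1}=\{I\}$ for every $g\notin K$, is where the content of Lemma~\ref{lem:UgUg} enters. For the pair $(U,B)$ it is literally the first identity of that lemma. For the pair $(\bar P,\bar B)$ I would deduce it from the second identity $P\cap gPg^{-1}=\pm I$ by passing to the quotient $\bar G=G/\pm I$, and this passage is the only delicate point. Since $\pm I\subset B$, the preimage of $\bar B$ is exactly $B$, so $\bar g\notin\bar B$ is equivalent to $g\notin B$. Given $\bar x\in\bar P\cap\bar g\bar P\bar g^{-1}$ with $\bar g\notin\bar B$, any lift $x\in P$ satisfies $x=\pm gpg^{-1}$ for some $p\in P$; because $-I$ is central and lies in $P$, one has $\pm gpg^{-1}\in gPg^{-1}$, so $x\in P\cap gPg^{-1}=\pm I$ and therefore $\bar x=\bar I$. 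Thus $\bar P\cap\bar g\bar P\bar g^{-1}=\{\bar I\}$, and with all three hypotheses verified Proposition~\ref{prop:hHK.Gfree} yields both stated isomorphisms for $n=2,3,\dots$.
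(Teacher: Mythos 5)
Your proposal is correct and follows essentially the same route as the paper: the paper's proof likewise invokes Lemma~\ref{lem:UgUg} to verify the intersection hypothesis and then cites Proposition~\ref{prop:hHK.Gfree} (the paper simply asserts $\bar{P}\cap g\bar{P}g^{-1}=\bar{I}$ for $g\notin\bar{B}$ without spelling out the descent from $P\cap gPg^{-1}=\pm I$). Your explicit verification of that quotient step --- using that $\pm I\subset P\subset B$, so the preimage of $\bar{B}$ is $B$ and the sign ambiguity in lifts is absorbed into $P$ and $gPg^{-1}$ --- is exactly the detail the paper leaves implicit, and it is carried out correctly.
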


\begin{proof}
By Lemma~\ref{lem:UgUg} $U$ is a $B$-malnormal subgroup of $\SL{\C}$ and $\bar{P}$ is a $\bar{B}$-malnormal subgroup of $\PSL{\C}$.
Hence, the result follows by Theorem~\ref{thm:hHK.Takasu}.
\end{proof}

\begin{remark}
 The first isomorphism is mentioned in \cite[Remark~3.6]{Dupont-Zickert:DFCCSC} but no proof was given, while in \cite[\S7]{Zickert:VCSIR}
it is mentioned that follows from \cite[Theorem~2.1]{Zickert:VCSIR} but there is no proof that $\CC{C}^{h_U^B\neq}(\C^2\setminus\{(0,0)\})$ is a $\SL{\C}$-free resolution
of the kernel of the augmentation map.
\end{remark}

\section{Invariants for finite volume hyperbolic $3$--manifolds}\label{sec:inv}

In this section we generalize the construction given in Cisneros-Molina--Jones
\cite{Cisneros-Jones:Bloch} to 
define invariants $\beta_H(M)\in H_3([\SL{\C}:H];\Z)$ of a complete oriented
hyperbolic $3$--manifold of finite volume $M$, where $H$ is one of the subgroups
$P$ or $B$ of $\SL{\C}$ defined in Section~\ref{sec:GUPB}.

\subsection{Hyperbolic $3$--manifolds and the fundamental class of
$\widehat{M}$}\label{ssec:fc}

Consider the upper half space model for the hyperbolic $3$--space $\hyp$ and
identify it with the set of quaternions $\sets{z+t\jq}{z\in\C,\quad t>0}$.
Let $\hypc=\hyp\cup\widehat{\C}$ be the standard compactification of $\hyp$.
The group of orientation
preserving isometries of $\hyp$ is isomorphic to
$\PSL{\C}$ and the action of $\bigl(\begin{smallmatrix}
a&b&\\c&d
\end{smallmatrix}\bigr)\in\PSL{\C}$  in $\hyp$ is given by the linear fractional
transformation
\begin{equation*}
\phi(w)=(aw+b)(cw+d)^{-1},\qquad w=z+t\jq,\qquad ad-bc=1,
\end{equation*}
which is the Poincar\'e extension to $\hyp$ of the complex linear fractional
transformation on $\widehat{\C}$ given by $\bigl(\begin{smallmatrix}
							a&b&\\c&d
						\end{smallmatrix}\bigr)$.
Recall that isometries of hyperbolic $3$--space $\hyp$ can be of three types:
\emph{elliptic} if fixes a point in $\hyp$; \emph{parabolic} if fixes no point
of $\hyp$ and fixes a unique point of $\widehat{\C}$ and \emph{hyperbolic} if
fixes no point of $\hyp$ and fixes two points of $\widehat{\C}$. 

A subgroup of $\SL{\C}$ or $\PSL{\C}$ is called \emph{parabolic} if all its
elements correspond to parabolic isometries of $\hyp$ fixing a common point in
$\widehat{\C}$. Since the action of
$\SL{\C}$ (or $\PSL{\C}$) in $\widehat{\C}$ is transitive and  the conjugates 
of
parabolic isometries  are parabolic \cite[(4.7.1)]{Ratcliffe:HypMan} we
can assume  that the fixed point is the  point at  infinity $\infty$
which we  denote by  its homogeneous coordinates  $\infty=[1:0]$ and
therefore parabolic subgroups are conjugate to a group of
matrices of the form  
$\left(\begin{smallmatrix}
\pm 1 & b\\
0 & \pm 1
\end{smallmatrix}\right)$, with $b\in\C$, or its image in $\PSL{\C}$. 
In other words, a parabolic subgroup of $\SL{\C}$ or $\PSL{\C}$ is conjugate to
a subgroup of $P$ or $\bar{P}$ respectively.

A complete oriented hyperbolic $3$--manifold $M$ is the quotient of the
hyperbolic $3$--space $\hyp$ by a discrete, torsion-free subgroup
$\dsubgr$ of orientation preserving isometries. 
Since $\dsubgr$ is torsion-free, it acts freely on $\hyp$
\cite[Theorem~8.2.1]{Ratcliffe:HypMan} and therefore it consist only of
parabolic and hyperbolic isometries.
Notice that since $\hyp$ is contractible it is the universal cover of $M$ and
therefore $\hg{M}=\dsubgr$ and  $M$ is a $\EMc{\dsubgr}{1}$, \ie $M=B\dsubgr$,
the classifying space of $\dsubgr$.
To such an 
hyperbolic $3$--manifold we can associate a \emph{geometric representation}
$\bar{\rho}\co\hg{M}=\dsubgr\to\PSL{\C}$ given by the inclusion, which is
canonical up to equivalence.
This representation can be lifted to a representation $\rho\co\dsubgr\to\SL{\C}$
\cite[Proposition~3.1.1]{Culler-Shalen:VGRS3M}. There is a one-to-one
correspondence between such lifts and spin structures on $M$.
We identify $\dsubgr$ with a subgroup of $\SL{\C}$ using the
representation $\rho\co\dsubgr\to\SL{\C}$ which corresponds to
the spin structure.

Let $M$ be an orientable complete hyperbolic $3$--manifold of finite volume. 
Such manifolds contain a compact $3$--manifold-with-boundary $\Mo$
such that $M-\Mo$ is the disjoint union of a finite number of
cusps. Each cusp of $M$ is diffeomorphic to $T^{2}\times(0,\infty)$,
where $T^{2}$ denotes the $2$--torus, 
see for instance \cite[page~647, Corollary~4 and
Theorem~10.2.1]{Ratcliffe:HypMan}.
The number of cusps can be zero, and this case corresponds when the 
manifold $M$ is a closed manifold.

Let $M$ be an oriented complete hyperbolic $3$--manifold of finite volume
with $d$ cusps, with $d>0$. Each boundary component $T^{2}_{i}$ of $\Mo$ defines
a subgroup $\dsubgr_{i}$
of $\hg{M}$ which is well defined up to conjugation. The
subgroups $\dsubgr_{i}$ are called the \emph{peripheral
subgroups} of $\dsubgr$. 
The image of $\dsubgr_{i}$ under the representation
$\rho\co\dsubgr\to\SL{\C}$ given by the inclusion is a free
abelian group of rank 2 of $\SL{\C}$. 
The subgroups $\dsubgr_{i}$ are parabolic subgroups of $\SL{\C}$.
Hence we have that $\dsubgr_{i}\in\fami(P)$.
Therefore the image of $\dsubgr_{i}$ under the representation
$\bar{\rho}\co\dsubgr\to\PSL{\C}$ is contained in $\fami(\bar{P})$.

Let $M=\dsubgr\backslash\hyp$ be a non-compact orientable complete
hyperbolic $3$--manifold of finite volume. Let
$\pi\co\hyp\to\dsubgr\backslash\hyp=M$ be the universal cover of $M$. Consider
the set $\mathcal{C}$ of fixed points of parabolic elements of $\dsubgr$ in
$\widehat{\C}$ and divide by the action of $\dsubgr$. The elements of the
resulting set $\widehat{\mathcal{C}}$ are called the \emph{cusp points} of $M$.

\begin{remark}\label{rem:hypebolic.no.fix.c}
No hyperbolic element in $\dsubgr$ has as fixed point any point in
$\mathcal{C}$, otherwise the group $\dsubgr$ would not be discrete
\cite[Theorem~5.5.4]{Ratcliffe:HypMan}.
\end{remark}

Let $\widehat{\hyp}=\hyp\cup\mathcal{C}$ and consider
$\widehat{M}=\dsubgr\backslash\widehat{\hyp}$. If $M$ is closed
$\mathcal{C}=\emptyset$ and $\widehat{M}=M$, if $M$ is non-compact we have
that $\widehat{M}$ is the \emph{end-compactification} of $M$ which is the
result of adding the cusp points of $M$. We get an extension of the covering map
$\pi$ to a map $\widehat{\pi}\co \widehat{\hyp}\to\widehat{M}$.

Consider as well the \emph{one-point-compactification} $\Mp$ of $M$ which
consists in identifying all the cusps points of $\widehat{M}$ to a single point.
Since $M$ is homotopically equivalent to the compact $3$--manifold-with-boundary
$\Mo$ we have that $\Mp=\widehat{M}/\widehat{\mathcal{C}}=\Mo/\partial\Mo$. 
By the exact sequence of the pair $(\widehat{M},\widehat{\mathcal{C}})$ we have
that $H_3(\widehat{M};\Z)\cong H_3(\widehat{M},\widehat{\mathcal{C}};\Z)$ and
therefore we have that
\begin{equation}\label{eq:hM.fc}
H_3(\widehat{M};\Z)\cong H_3(\widehat{M},\widehat{\mathcal{C}};\Z)\cong
H_3(\widehat{M}/\widehat{\mathcal{C}};\Z)
\cong H_3(\Mp;\Z)\cong H_3(\Mo,\partial\Mo;\Z)\cong\Z.
\end{equation}
We denote by $[\widehat{M}]$ the generator and call it the \emph{fundamental
class of $\widehat{M}$}.

\subsection{Invariants of hyperbolic $3$--manifolds of finite
volume}\label{ssec:inv}

Let $M$ be a compact oriented hyperbolic $3$--manifold. To the canonical
representation $\bar{\rho}\co\hg{M}\to\PSL{\C}$ corresponds a map $B\rho\co M\to
B\PSL{\C}$
where $B\PSL{\C}$ is the classifying space of $\PSL{\C}$. There is a
well-known invariant $\fcPSL{M}$ of $M$ in the group $H_3(\PSL{\C};\Z)$ given by
the image of the
fundamental class of $M$ under the homomorphism induced in homology by $B\rho$.

As we said before, we generalize the construction given in
Cisneros-Molina--Jones \cite{Cisneros-Jones:Bloch} to extend this invariant when
$M$ is a complete oriented hyperbolic $3$--manifold of finite volume (\ie $M$
is compact or with cusps) to invariants $\beta_H(M)$, but in this case
$\beta_H(M)$ takes values in
$H_3([\PSL{\C}:\bar{H}];\Z)$, where $\bar{H}$ is one of the subgroups $\bar{P}$ or $\bar{B}$ of
$\bar{G}=\PSL{\C}$ defined in Section~\ref{sec:GUPB}. 

Let $\dsubgr$ be a discrete torsion-free subgroup of $\PSL{\C}$.
The action of $\dsubgr$ on the hyperbolic $3$--space $\hyp$ is free and
since $\hyp$ is contractible, by Theorem~\ref{thm:hom.ch} it is a model for
$E\dsubgr$.

The action of $\dsubgr$ on $\widehat{\hyp}$ is no longer free. The 
points in $\mathcal{C}$ have as isotropy subgroups the peripheral subgroups
$\dsubgr_1,\dots,\dsubgr_d$ of $\dsubgr$ or their conjugates and any subgroup in
$\fami(\dsubgr_1,\dots,\dsubgr_d)$ fixes only one point in $\mathcal{C}$.
Therefore, by Theorem~\ref{thm:hom.ch} we have that
$\widehat{\hyp}$ is a model for $\EGF[\fami(\dsubgr_1,\dots,\dsubgr_d)]{\dsubgr}$.

We have the following facts:
\begin{itemize}
 \item Since $\{I\}\subset\fami(\Gamma_1,\dots\Gamma_d)$ there is a
$\dsubgr$--map $\hyp\to\widehat{\hyp}$ unique up to $\dsubgr$--homotopy. We can use
the inclusion.
 \item By Proposition~\ref{prop:res} $\mathrm{res}_\dsubgr^{\bar{G}} E\bar{G}$ is a model for
$E\dsubgr$.
Therefore, there is a $\dsubgr$--homotopy equivalence $\hyp\to
\mathrm{res}_\dsubgr^{\bar{G}} E\bar{G}$ which is unique up to $\dsubgr$--homotopy. 
 \item Since
$\fami(\Gamma_1,\dots\Gamma_d)=\fami(\bar{P})/\dsubgr\subset\fami(\bar{B})/\dsubgr$ we have
$\dsubgr$--maps
\begin{equation*}
 \widehat{\hyp}\to \mathrm{res}_\dsubgr^{\bar{G}}\EGF[\fami(\bar{P})]{\bar{G}}\to
\mathrm{res}_\dsubgr^{\bar{G}}\EGF[\fami(\bar{B})]{\bar{G}}
\end{equation*}
which are unique up to $\dsubgr$--homotopy. 
\end{itemize}

\begin{remark}\label{rem:hY=EFP}
Since $\fami(\Gamma_1,\dots\Gamma_d)=\fami(\bar{P})/\dsubgr$ by
Proposition~\ref{prop:res} we have that the $\dsubgr$--space
$\mathrm{res}^{\bar{G}}_\dsubgr\EGF[\fami(\bar{P})]{\bar{G}}$ is a model for
$\EGF[\fami(\dsubgr_1,\dots,\dsubgr_d)]{\dsubgr}$. Therefore, the $\dsubgr$--map
$\widehat{\hyp}\to \mathrm{res}_\dsubgr^{\bar{G}}\EGF[\fami(\bar{P})]{\bar{G}}$ is in fact a
$\dsubgr$--homotopy equivalence.
\end{remark}

Combining the previous $\dsubgr$--maps with the $\bar{G}$--maps given in
\eqref{eq:G.maps} we have the following commutative diagram (up to equivariant
homotopy)
\begin{equation*}
\xymatrix{
 & E\bar{G}\ar[rr] & & \EGF[\fami(\bar{P})]{\bar{G}}\ar[r]
 & \EGF[\fami(\bar{B})]{\bar{G}}   \\
\hyp\ar[ru]\ar[rr] & & \widehat{\hyp}\ar[rru]_{\psi_{\bar{B}}}\ar[ru]^{\psi_{\bar{P}}} & &   \\
}
\end{equation*}
Taking the quotients by $\PSL{\C}$ and $\dsubgr$ we get the following
commutative diagram 
\begin{equation}\label{eq:cube}
\xymatrix{
 & E\bar{G}\ar[rr]\ar'[d][dd] & & \EGF[\fami(\bar{P})]{\bar{G}}\ar[r]\ar[dd] |!{[dl];[r]}\hole
 & \EGF[\fami(\bar{B})]{\bar{G}}\ar[dd]   \\
\hyp\ar[ru]\ar[rr]\ar[dd] & &
\widehat{\hyp}\ar[rru]_{\psi_{\bar{B}}}\ar[ru]^{\psi_{\bar{P}}}\ar[dd] & &   \\
 & B\bar{G}\ar'[r][rr] & & \BGF[\fami(\bar{P})]{\bar{G}}\ar[r] & \BGF[\fami(\bar{B})]{\bar{G}}  \\
M\ar[ru]^{f}\ar[rr] & &
\widehat{M}\ar[rru]_{\widehat{\psi}_{\bar{B}}}\ar[ru]^{\widehat{\psi}_{\bar{P}}} & & 
}
\end{equation}
where $f=B\bar{\rho}\co B\dsubgr\to B\bar{G}$ is the map between classifying spaces
which on fundamental groups induces the representation
$\bar{\rho}\co\dsubgr\to \PSL{\C}$ of $M$, and $\widehat{\psi}_{\bar{P}}$ and
$\widehat{\psi}_{\bar{B}}$ are given by the
compositions  
\begin{gather*}
\widehat{\psi}_{\bar{P}}\co\widehat{M}\to \EGF[\fami(\bar{P})]{\bar{G}}/\dsubgr\to
\BGF[\fami(\bar{P})]{\bar{G}},\\
\widehat{\psi}_{\bar{B}}\co\widehat{M}\to \EGF[\fami(\bar{B})]{\bar{G}}/\dsubgr\to
\BGF[\fami(\bar{B})]{\bar{G}},
\end{gather*} 
and they are well-defined up to homotopy.

The maps $\widehat{\psi}_{\bar{P}}$ and $\widehat{\psi}_{\bar{B}}$ induce homomorphisms 
\begin{align*}\label{eq:MtoB}
(\widehat{\psi}_{\bar{P}})_*\co H_3(\widehat{M};\Z)&\to H_3(\BGF[\fami(\bar{P})]{\bar{G}};\Z),\\ 
(\widehat{\psi}_{\bar{B}})_*\co H_3(\widehat{M};\Z)&\to H_3(\BGF[\fami(\bar{B})]{\bar{G}};\Z).
\end{align*}
We denote by $\beta_{\bar{P}}(M)$ and $\beta_{\bar{B}}(M)$ the canonical classes
in $H_3(\BGF[\fami(\bar{P})]{\bar{G}};\Z)$ and $H_3(\BGF[\fami(\bar{B})]{\bar{G}};\Z)$ respectively,
given by 
the images of the fundamental class $[\widehat{M}]$ of $\widehat{M}$
\begin{align*}
\beta_{\bar{P}}(M)&=(\widehat{\psi}_{\bar{P}})_*([\widehat{M}]),\\
\beta_{\bar{B}}(M)&=(\widehat{\psi}_{\bar{B}})_*([\widehat{M}]).
\end{align*}
By the commutativity of the lower triangle in \eqref{eq:cube} we have that
$\beta_{\bar{P}}(M)$ is sent to $\beta_{\bar{B}}(M)$ by the canonical homomorphism from
$H_3(\BGF[\fami(\bar{P})]{\bar{G}};\Z)$ to $H_3(\BGF[\fami(\bar{B})]{\bar{G}};\Z)$.

Thus, by Proposition~\ref{prop:CS.HRGH} we have the following
\begin{theorem}\label{thm:beta.P.B}
Given a complete oriented hyperbolic $3$--manifold of finite volume $M$
we have well-defined invariants
\begin{align*}
\beta_{\bar{P}}(M)&\in H_3([\PSL{\C}:\bar{P}];\Z),\\
\beta_{\bar{B}}(M)&\in H_3([\PSL{\C}:\bar{B}];\Z).
\end{align*}
Moreover, we have that
\begin{equation*}
\beta_{\bar{B}}(M)=(h_{\bar{P}}^{\bar{B}})_*\bigl(\beta_{\bar{P}}(M)\bigr),
\end{equation*}
where $(h_{\bar{P}}^{\bar{B}})_* \co H_n([\bar{G}:\bar{P}];\Z)\to H_n([\bar{G}:\bar{B}];\Z)$ is the homomorphism
described in \eqref{eq:hHK}.
\end{theorem}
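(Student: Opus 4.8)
The plan is to read the statement off the construction already assembled in the cube \eqref{eq:cube}, so that almost nothing new needs to be proved; the two things to check are that $\beta_P(M)$ and $\beta_B(M)$ are genuinely well defined, and the naturality identity $\beta_B(M)=(h_P^B)_*(\beta_P(M))$. For well-definedness I would first recall from Subsection~\ref{ssec:fc} that $H_3(\widehat{M};\Z)\cong\Z$, so that, once the orientation of $M$ is fixed, the fundamental class $[\widehat{M}]$ is a canonical generator. Next I would observe that $\widehat{\psi}_P$ and $\widehat{\psi}_B$ are by construction the composites of the $\dsubgr$--maps $\psi_P$ and $\psi_B$ with the orbit maps $\EGF[\fami(P)]{G}/\dsubgr\to\BGF[\fami(P)]{G}$ and $\EGF[\fami(B)]{G}/\dsubgr\to\BGF[\fami(B)]{G}$. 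The maps $\psi_P,\psi_B$ are unique up to $\dsubgr$--homotopy by the universal property of classifying spaces for families of subgroups, and a $\dsubgr$--homotopy descends to an ordinary homotopy on the quotient by $\dsubgr$; hence $\widehat{\psi}_P,\widehat{\psi}_B$ are well defined up to homotopy, and so are the induced homomorphisms on $H_3$. Consequently $\beta_P(M)=(\widehat{\psi}_P)_*[\widehat{M}]$ and $\beta_B(M)=(\widehat{\psi}_B)_*[\widehat{M}]$ depend only on $M$, and Proposition~\ref{prop:CS.HRGH} transports them along the isomorphisms $H_3(\BGF[\fami(P)]{G};\Z)\cong H_3([G:P];\Z)$ and $H_3(\BGF[\fami(B)]{G};\Z)\cong H_3([G:B];\Z)$ into the Hochschild relative homology groups.

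For the naturality identity, the crucial input is the homotopy-commutativity of the bottom face of \eqref{eq:cube}: the canonical map $\BGF[\fami(P)]{G}\to\BGF[\fami(B)]{G}$ induced by the inclusion of families $\fami(P)\hookrightarrow\fami(B)$ satisfies $\widehat{\psi}_B\simeq\bigl(\BGF[\fami(P)]{G}\to\BGF[\fami(B)]{G}\bigr)\circ\widehat{\psi}_P$. Applying $H_3(-;\Z)$ and evaluating on $[\widehat{M}]$ exhibits $\beta_B(M)$ as the image of $\beta_P(M)$ under the homomorphism induced by $\BGF[\fami(P)]{G}\to\BGF[\fami(B)]{G}$. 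It then only remains to identify that homomorphism with $(h_P^B)_*$ of \eqref{eq:hHK}, which is precisely the content of Remark~\ref{rem:hik*}; combining these yields $\beta_B(M)=(h_P^B)_*(\beta_P(M))$.

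The argument is bookkeeping rather than a genuine obstacle, since everything follows formally from the commutativity of \eqref{eq:cube}, Proposition~\ref{prop:CS.HRGH} and Remark~\ref{rem:hik*}. The one point deserving care is the descent of equivariant data, namely verifying that a choice fixed only up to $\dsubgr$--homotopy upstairs yields a map defined up to homotopy downstairs and hence a single well-defined homomorphism on $H_3$. Once this is granted, both assertions of the theorem are immediate.
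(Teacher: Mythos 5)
Your proposal is correct and follows essentially the same route as the paper: the paper likewise obtains well-definedness from the uniqueness up to $\dsubgr$--homotopy of the maps into the classifying spaces for the families $\fami(P)$ and $\fami(B)$ (hence of $\widehat{\psi}_P$, $\widehat{\psi}_B$ up to homotopy on the quotients), transports the classes via Proposition~\ref{prop:CS.HRGH}, and deduces the identity $\beta_B(M)=(h_P^B)_*\bigl(\beta_P(M)\bigr)$ from the commutativity of the lower triangle of \eqref{eq:cube} together with the identification of the canonical map $\BGF[\fami(P)]{G}\to\BGF[\fami(B)]{G}$ in Remark~\ref{rem:hik*}. Your emphasis on the descent of equivariant homotopies to ordinary homotopies on orbit spaces is exactly the point the paper relies on implicitly, so nothing is missing.
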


\begin{remark}
In general is easier to work with matrices in $\SL{\C}$ than with cosets in $\PSL{\C}$. Let $\rho\colon\dsubgr\to\SL{\C}$
be a lifting of the geometric representation $\bar{\rho}\colon\dsubgr\to\PSL{\C}$.
Notice that in diagram \eqref{eq:cube} we can replace $\bar{G}$ by $G$. Since by
Proposition~\ref{prop:3.IT}
$H_3([G:P];\Z)\cong H_3([\bar{G}:\bar{P}];\Z)$ and 
$H_3([G:B];\Z)\cong H_3([\bar{G}:\bar{B}];\Z)$, by \eqref{eq:hom.diag} we get
the same invariants $\beta_{\bar{P}}(M)$ and $\beta_{\bar{B}}(M)$. That is, if we do the construction starting with a lifting $\rho$
of the geometric representation $\bar{\rho}$, the invariants $\beta_P(M)$ and $\beta_B(M)$ of $M$ only depend on the geometric
representation $\bar{\rho}\co\dsubgr\to\PSL{\C}$ and not on the lifting
$\rho\co\dsubgr\to\SL{\C}$.
In other words, they are independent of the choice a spin structure of $M$.
In order to simplify notation we simply write
$\beta_P(M)$ and $\beta_B(M)$.
\end{remark}

\begin{remark}
The invariants $\beta_P(M)$ and $\beta_B(M)$ extend the invariant
$[M]_{\mathrm{PSL}}$ for $M$ closed in the
following sense: when $M$ is compact $\widehat{M}=M$, by the commutativity of
the lower diagram in \eqref{eq:cube} and by
Remark~\ref{rem:hik*} we have that
\begin{align*} 
(\widehat{\psi}_{\bar{P}})_*&=(h_{\bar{I}}^{\bar{P}})_*\circ f_*,\\
(\widehat{\psi}_{\bar{B}})_*&=(h_{\bar{I}}^{\bar{B}})_*\circ f_*,
\end{align*}
where $(h_{\bar{I}}^{\bar{P}})_*$ and $(h_{\bar{I}}^{\bar{B}})_*$ are the
homomorphisms described in \eqref{eq:hHK}. Thus
\begin{align*}
\beta_P(M)&=(h_{\bar{I}}^{\bar{P}})_*(\fcPSL{M}),\\
\beta_B(M)&=(h_{\bar{I}}^{\bar{B}})_*(\fcPSL{M}).
\end{align*}
\end{remark}

\section{Relation with the extended Bloch group}\label{sec:bloch}

In the present section we recall the definitions of the Bloch and extended Bloch
groups and the Bloch invariant.

\subsection{The Bloch group}

The \emph{pre-Bloch group} $\pB{\C}$ is the abelian group
generated by the formal symbols $[z]$, $z\in \C\setminus\{0,1\}$ subject to
the relation
\begin{equation}\label{eq:5tr}
[x]-[y]+\Bigl[\frac{y}{x}\Bigr]-\Bigl[\frac{1-x^{-1}}{1-y^{-1}}\Bigr]+\Bigl[
\frac{1-x}{1-y}\Bigr]=0,
\quad x\neq y.
\end{equation}
This relation is called the \emph{five term relation}.
By Dupont--Sah \cite[Lemma~5.11]{Dupont-Sah:SCII} we also have the following
relations in $\pB{\C}$
\begin{equation}\label{eq:CR.rel}
[x]=\Bigl[\frac{1}{1-x}\Bigr]=\Bigl[1-\frac{1}{x}\Bigr]=-\Bigl[\frac{1}{x}\Bigr]
=-\Bigl[\frac{x}{x-1}\Bigr]
=-[1-x].
\end{equation}
Using this relations it is possible to extend the definition of $[x]\in\pB{\C}$
allowing $x\in\widehat{\C}$
and removing the restriction $x\neq y$ in \eqref{eq:5tr}. This is equivalent
\cite[after Lemma~5.11]{Dupont-Sah:SCII} to define $\pB{\C}$ as the abelian
group generated by the symbols $[z]$, $z\in\widehat{\C}$ subject to the
relations 
\begin{gather*}
[0]=[1]=[\infty]=0,\\
[x]-[y]+\Bigl[\frac{y}{x}\Bigr]-\Bigl[\frac{1-x^{-1}}{1-y^{-1}}\Bigr]+\Bigl[
\frac{1-x}{1-y}\Bigr]=0.
\end{gather*}

\begin{remark}
If we consider the first definition of the pre-Bloch group where for the
generators $[z]$ of $\pB{\C}$ we only allow $z$ to be in $\C\setminus\{0,1\}$,
the pre-Bloch group can be interpreted as a homology group. Consider the subcomplex
$\CC{C}^{\neq}(\Xsubgr{B}))=\CC{C}^{\neq}(\widehat{\C}))$ of tuples of distinct elements of $\widehat{\C}$ (see Remark~\ref{rem:H.neq}).

The action of $\bar{G}$ on $\widehat{\C}$ by fractional linear transformations (see
\S\ref{ssec:B}) is not only transitive but triply transitive, that is, given
four distinct points $z_0$, $z_1$, $z_2$, $z_3$ in $\widehat{\C}$, there exists
an element $g\in\PSL{\C}$ such that
\begin{equation*}
g\cdot z_{0}=0,\quad g\cdot z_{1}=\infty,\quad g\cdot z_{2}=1,\quad
g\cdot z_{3}=z
\end{equation*}
where $z=[z_{0}:z_{1}:z_{2}:z_{3}]$ is the \emph{cross-ratio} of $z_0$, $z_1$,
$z_2$, $z_3$ given by
 \begin{equation}\label{eq:X.R}
 [z_{0}:z_{1}:z_{2}:z_{3}]=
 \frac{(z_{0}-z_{3})(z_{1}-z_{2})}{(z_{0}-z_{2})(z_{1}-z_{3})}. 
 \end{equation}
In other words, the orbit of a $4$--tuple $(z_0,z_1,z_2,z_3)$ of distinct points
in $\widehat{\C}$ under the diagonal action of $\bar{G}$ is determined by its
cross-ratio. Thus, we get a surjective homomorphism given by the cross-ratio
\begin{equation}
\begin{split}\label{eq:sim.psiB.neq}
 \sigma\co \CC[3]{B}^{\neq}(\Xsubgr{\bar{B}})=\CC[3]{B}^{\neq}(\widehat{\C})&\to \pB{\C}\\
  (z_0,z_1,z_2,z_3)_G&\mapsto [z_{0}:z_{1}:z_{2}:z_{3}]
\end{split}
\end{equation}
where $(z_0,z_1,z_2,z_3)_G$ denotes the $G$--orbit of the generator $(z_0,z_1,z_2,z_3)\in\CC[3]{C}^{\neq}(\Xsubgr{\bar{B}})$.
It is easy to see that the five term relation \eqref{eq:5tr} is equivalent to the relation
\begin{equation*}
 \sum_{i=0}^4(-1)^i[z_0:\dots:\widehat{z_i}:\dots:z_4]=0.
\end{equation*}
Also by the triply transitivity of the action of $\bar{G}$ on $\widehat{\C}$ we have that
$\CC[2]{B}(\Xsubgr{\bar{B}})\cong\Z$ and $\CC[3]{B}(\Xsubgr{\bar{B}})$ consists only of
cycles. Thus $\sigma$ induces an isomorphism (compare with \cite[Lemma~2.2]{Suslin:K3FBG})
\begin{equation}
H_3(\CC{B}^{\neq}(\Xsubgr{\bar{B}})\cong\pB{\C}.
\end{equation}

Also using this definition of the pre-Bloch group it is possible to prove that
it is isomorphic to the corresponding Takasu relative homology group:
\begin{proposition}
 \begin{equation*}
  H_3(G,B;\Z)\cong\pB{\C}.
 \end{equation*}
\end{proposition}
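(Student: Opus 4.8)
The second isomorphism $H_3([\SL{\C}:B];\Z)\cong\pB{\C}$ is exactly \eqref{eq:pB.H3B} (equivalently \eqref{eq:pB.H3Bneq}, via Remark~\ref{rem:H.neq}), so the substance of the statement is the first equality: that the Takasu group $H_3(\SL{\C},B;\Z)$ also computes $\pB{\C}$, or equivalently that the canonical comparison map \eqref{eq:Taka.Hoch} is an isomorphism in degree $3$. The plan is to compute the Takasu side from its $\mathrm{Tor}$ description \eqref{eq:Takasu.Tor}, $H_3(G,B;\Z)\cong\mathrm{Tor}^{\Z[G]}_{2}(I_{(G,B)}(\Z),\Z)$, using the $\neq$--subcomplex $\CC{C}^{\neq}(\Xsubgr{B})$ as the building block. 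I would carry out the computation over $\bar{G}=\PSL{\C}$ and $\bar{B}$, where the chain groups are better behaved, and reduce the $\SL{\C}$--statement to this one as for the Hochschild groups in Proposition~\ref{prop:3.IT}; the Takasu analogue of that reduction I would establish separately from the long exact sequence \eqref{eq:long.es}.

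First I would record that the augmented complex $\CC{C}^{\neq}(\Xsubgr{\bar{B}})$ is an exact $\bar{G}$--resolution of $I_{(\bar{G},\bar{B})}(\Z)$: its acyclicity is Lemma~\ref{lem:hHK.res} applied with $H=K=\bar{B}$ (which has infinite index in $\bar{G}$ since $\widehat{\C}$ is infinite), and the identification $\im\partial_1=\ker\varepsilon=I_{(\bar{G},\bar{B})}(\Z)$ is Remark~\ref{rem:IGH=ker}. The decisive point is that $\CC[n]{C}^{\neq}(\Xsubgr{\bar{B}})$ is a \emph{free} $\bar{G}$--module for $n\geq2$: a tuple of distinct points of $\widehat{\C}$ with $n\geq2$ involves at least three distinct points, whose common stabilizer in $\PSL{\C}$ is trivial. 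In degree $1$ this fails, since the stabilizer of an ordered pair of distinct points is a conjugate of the diagonal torus, so $\CC[1]{C}^{\neq}(\Xsubgr{\bar{B}})\cong\Z[\bar{G}/\bar{T}]$ is only a permutation module.

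Setting $Z_1=\ker\bigl(\CC[1]{C}^{\neq}(\Xsubgr{\bar{B}})\to I_{(\bar{G},\bar{B})}(\Z)\bigr)=\im\partial_2$, the truncated complex $\cdots\to\CC[3]{C}^{\neq}\to\CC[2]{C}^{\neq}\to Z_1\to0$ is then a genuine free resolution, so $\mathrm{Tor}^{\Z[\bar{G}]}_{j}(Z_1,\Z)\cong H_{j+2}(\CC{B}^{\neq}(\Xsubgr{\bar{B}}))$ for $j\geq1$; in particular $\mathrm{Tor}_1(Z_1,\Z)\cong H_3(\CC{B}^{\neq}(\Xsubgr{\bar{B}}))\cong\pB{\C}$ by \eqref{eq:pB.H3Bneq}. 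Feeding this into the short exact sequence $0\to Z_1\to\Z[\bar{G}/\bar{T}]\to I_{(\bar{G},\bar{B})}(\Z)\to0$ and using Shapiro's lemma $\mathrm{Tor}^{\Z[\bar{G}]}_{n}(\Z[\bar{G}/\bar{T}],\Z)\cong H_n(\bar{T};\Z)$, the long exact sequence in $\mathrm{Tor}^{\Z[\bar{G}]}(-,\Z)$ becomes
\begin{equation*}
H_4(\CC{B}^{\neq}(\Xsubgr{\bar{B}}))\to H_2(\bar{T};\Z)\to H_3(\bar{G},\bar{B};\Z)\to\pB{\C}\xrightarrow{\;\delta\;}H_1(\bar{T};\Z),
\end{equation*}
with $\delta$ the connecting homomorphism. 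Hence $H_3(\bar{G},\bar{B};\Z)\cong\pB{\C}$ will follow once $\delta$ vanishes and $H_4(\CC{B}^{\neq}(\Xsubgr{\bar{B}}))\to H_2(\bar{T};\Z)$ is onto.

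The vanishing of these torus contributions is the main obstacle. Unlike the cases of $U$ and $\bar{P}$ in Corollary~\ref{cor:hoc=tak.U.bP}, here $\bar{B}\cap g\bar{B}g^{-1}$ is a nontrivial torus for generic $g$, so the hypothesis of Proposition~\ref{prop:hHK.Gfree} genuinely fails and the homology $H_*(\bar{T};\Z)=H_*(\C^\times;\Z)$ really enters the computation. I expect showing $\delta=0$ to require an explicit chain-level argument: a pre-Bloch class is represented by a $\bar{G}$--orbit of a $4$--tuple of distinct points, and one must verify that its degeneration into $\Z[\bar{G}/\bar{T}]$ is a boundary for the torus. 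Alternatively the whole step can be bypassed by invoking Suslin's analysis of the configuration complex on $\widehat{\C}$ \cite[Lemma~2.2]{Suslin:K3FBG}, which identifies $H_3(\PSL{\C},\bar{B};\Z)$ with $\pB{\C}$ directly, the $\SL{\C}$--statement then following from the reduction above.
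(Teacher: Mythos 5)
Your algebraic skeleton is set up correctly as far as it goes: the augmented complex $\CC{C}^{\neq}(\Xsubgr{\bar{B}})$ is a resolution of $I_{(\bar{G},\bar{B})}(\Z)$ by Lemma~\ref{lem:hHK.res} and Remark~\ref{rem:IGH=ker}, it is free in degrees $\geq 2$ by sharp $3$--transitivity, $\CC[1]{C}^{\neq}(\Xsubgr{\bar{B}})\cong\Z[\bar{G}/\bar{T}]$, and the four-term exact sequence you extract from the truncation and Shapiro's lemma is correct. But your argument stops exactly where the content of the proposition begins: you must prove that $\delta\co\pB{\C}\to H_1(\bar{T};\Z)\cong\C^\times$ vanishes and that $H_4(\CC{B}^{\neq}(\Xsubgr{\bar{B}}))\to H_2(\bar{T};\Z)\cong\antp{\C^\times}$ is onto, and you offer only the expectation that a chain-level argument exists. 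Nothing formal forces either statement; these torus contributions are precisely the difference between the Hochschild and Takasu theories for the pair $(\bar{G},\bar{B})$, where the hypothesis of Proposition~\ref{prop:hHK.Gfree} fails, and disposing of them is the whole substance of the claim. Your proposed bypass does not close this gap: \cite[Lemma~2.2]{Suslin:K3FBG} computes the homology of the coinvariant configuration complex, i.e.\ exactly the Hochschild-side isomorphism \eqref{eq:pB.H3B}/\eqref{eq:pB.H3Bneq} that you already take as known; it says nothing about the Takasu group $H_3(\bar{G},\bar{B};\Z)$, so invoking it is circular with respect to the one part that remains to be proved.

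The reduction from $\SL{\C}$ to $\PSL{\C}$ is a second unproved step, and it is not routine. Proposition~\ref{prop:3.IT} has no automatic Takasu analogue: its proof uses the equivariant bijection $G/B\cong\bar{G}/\bar{B}$ at the level of the permutation chain complexes, which has no meaning for $H_*(BG,BH;\Z)$. Comparing the long exact sequences \eqref{eq:long.es} of $(G,B)$ and $(\bar{G},\bar{B})$ does not give it either, because the vertical maps $H_*(B;\Z)\to H_*(\bar{B};\Z)$ and $H_*(G;\Z)\to H_*(\bar{G};\Z)$ are not isomorphisms (already $H_3(\SL{\C};\Z)$ and $H_3(\PSL{\C};\Z)$ differ), so the five lemma does not apply; this step would need its own argument, e.g.\ a spectral sequence for the extension by $\pm I$. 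The paper avoids all of this and works directly with $G=\SL{\C}$: by Takasu's dimension shift \cite[Theorem~2.2]{Takasu:RHRCTG} one has $H_3(G,B;\Z)\cong H_2(G;I_{(G,B)}(\Z))$, and Dupont--Sah \cite[(A27), (A28)]{Dupont-Sah:SCII} identify $H_2(G;I_{(G,B)}(\Z))$ with $\pB{\C}$. The analysis in the Dupont--Sah appendix is exactly what kills the torus terms you left open; if you want to complete your route rather than cite it, that is the argument you would have to reproduce.
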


\begin{proof}
By \cite[Theorem~2.2]{Takasu:RHRCTG} we have that $H_3(G,B;\Z)\cong
H_2(G,I_{(G,B)}(\Z))$ and by \cite[(A27), (A28)]{Dupont-Sah:SCII} we also have
$H_2(G,I_{(G,B)}(\Z))\cong\pB{\C}$.
\end{proof}
\end{remark}

\begin{remark}
If we extend the definition of the cross-ratio to
$[z_{0}:z_{1}:z_{2}:z_{3}]=0$ whenever $z_{i}=z_{j}$ for some $i\neq j$, we can extend the homomorphism $\sigma$
to a homomorphism
\begin{equation}
\begin{split}\label{eq:sim.psiB}
 \sigma\co \CC[3]{B}(\Xsubgr{\bar{B}})=\CC[3]{B}(\widehat{\C})&\to \pB{\C}\\
  (z_0,z_1,z_2,z_3)_G&\mapsto [z_{0}:z_{1}:z_{2}:z_{3}]
\end{split}
\end{equation}
where $(z_0,z_1,z_2,z_3)_G$ denotes the $G$--orbit of the $3$--simplex
$(z_0,z_1,z_2,z_3)\in\CC[3]{C}(\Xsubgr{\bar{B}})$. Using the relations \eqref{eq:CR.rel} one can prove
that the boundaries in $\CC[3]{B}(\Xsubgr{\bar{B}})$ go to $0$ in $\pB{\C}$ under $\sigma$, so it extends to
a homomorphism
\begin{equation}\label{eq:pB.H3B}
\sigma\co H_3([\bar{G}:\bar{B}];\Z)= H_3(\CC{B}(\Xsubgr{\bar{B}}))\to\pB{\C}.
\end{equation}
By direct computation one can see that the extra cycles in $\CC[3]{C}(\Xsubgr{\bar{B}})$ are also boundaries and therefore
the homomorphism \eqref{eq:pB.H3B} is also an isomorphism.
\end{remark}

The \emph{Bloch group} $\Bg{\C}$ is the kernel of the map
\begin{equation}\label{eq:lambda}
\begin{split}
\nu\co\pB{\C}&\to \antp{\C^\times}\\
[z]&\mapsto \Asym{z}{(1-z)}.
\end{split}
\end{equation}

\subsection{The Bloch invariant}

An \emph{ideal simplex} is a geodesic $3$--simplex in $\hypc$ whose vertices
$z_0$, $z_1$, $z_2$, $z_3$ are all in $\partial\hyp=\hat{\C}$. We consider the
vertex ordering as part of the data defining an ideal
simplex. By the triply transitivity of the action of $G$ on $\hyp$ the
orientation-preserving congruence class of an ideal simplex with vertices $z_0$,
$z_1$, $z_2$, $z_3$ is given by the cross-ratio $z=[z_{0}:z_{1}:z_{2}:z_{3}]$.
An ideal simplex is \emph{flat} if and only if the cross-ratio is real, and if
it is not flat, the orientation given by the vertex ordering agrees with the
orientation inherited from $\hyp$
if and only if the cross-ratio has positive imaginary part.

From \eqref{eq:X.R} we have that an even (\ie orientation preserving) permutation
of the $z_i$ replaces $z$ by one
of three so-called \emph{cross-ratio parameters},
\begin{equation*}
 z,\qquad z'=\frac{1}{1-z},\qquad z''=1-\frac{1}{z},
\end{equation*}
while an odd (\ie orientation reversing) permutation replaces $z$ by
\begin{equation*}
 \frac{1}{z},\qquad \frac{z}{z-1},\qquad 1-z.
\end{equation*}
Thus, by the relations \eqref{eq:CR.rel} in $\pB{\C}$ we can consider the
pre-Bloch group as being
generated by (congruence classes) of oriented ideal simplices.

Let $M$ be a non-compact orientable complete hyperbolic $3$--manifold of finite
volume. An \emph{ideal
triangulation} for $M$ is a triangulation where all the tetrahedra are ideal
simplices.

Let $M$ be an hyperbolic $3$--manifold and let $\suce{\triangle}$ be the
ideal simplices of an ideal triangulation of $M$. Let $z_{i}\in\C$ be
the parameter of $\triangle_{i}$  for each $i$. These parameters define
an element $\beta(M)=\sum_{i=1}^{n}[z_{i}]$ in the pre-Bloch group.
The element $\beta(M)\in\pB{\C}$ is called
the \emph{Bloch invariant of $M$}, it was defined by Neumann and Yang in \cite{Neumann-Yang:BIHM}, where they
proved that it does not depend on the ideal triangulation.

\begin{remark}
Neumann and Yang defined the Bloch invariant using \emph{degree one
ideal triangulations}, in that way it is defined for all hyperbolic
$3$--manifolds of finite volume, even the compact ones, see Neumann--Yang
\cite[\S2]{Neumann-Yang:BIHM} for details.
\end{remark}

In \cite[Theorem~1.1]{Neumann-Yang:BIHM} it is proved that the Bloch invariant
lies in the Bloch group $\Bg{\C}$. An alternative proof of this fact is given in
Cisneros-Molina--Jones \cite[Cor.~8.7]{Cisneros-Jones:Bloch}.

\begin{remark}
By \eqref{eq:pB.H3B} we have that $H_3([G:B];\Z)\cong\pB{\C}$ and in
\cite[Theorem~6.1]{Cisneros-Jones:Bloch} it is proved that $\beta_B(M)$ is
precisely the Bloch invariant $\beta(M)$ of $M$, see
Subsection~\ref{ssec:bloch.inv}.
\end{remark}

\subsection{The extended Bloch group}

Given a complex number $z$ we use the convention that its argument $\Arg z$
always denotes its
main argument $-\pi<\Arg z\leq \pi$ and $\Log z$ always denotes a fixed branch
of logarithm, for instance,
the principal branch having
$\Arg z$ as imaginary part.

Let $\triangle$ be an ideal simplex with cross-ratio $z$. A \emph{flattening} of
$\triangle$ is a triple of complex numbers of the form
\begin{equation*}
 (w_0,w_1,w_2)=(\Log z+p\pi i,-\Log(1-z)+q\pi i,\Log(1-z)-\Log z-p\pi i-q\pi i)
\end{equation*}
with $p,q\in\Z$. The numbers $w_0$ , $w_1$ and $w_2$ are called \emph{log
parameters} of $\triangle$. Up to multiples of $\pi i$, the log parameters are
logarithms of the cross-ratio parameters. 

\begin{remark}\label{rem:zpq}
The log parameters uniquely determine $z$. Hence we can write a flattening as
$[z; p, q]$. Note that this notation depends on the choice of logarithm branch \cite[Lemma~3.2]{Neumann:EBGCCSC}.
\end{remark}

Following \cite{Neumann:EBGCCSC} we assign cross-ratio parameters and log
parameters to the
edges of a flattened ideal simplex as indicated in Figure~\ref{fig:parameter}.
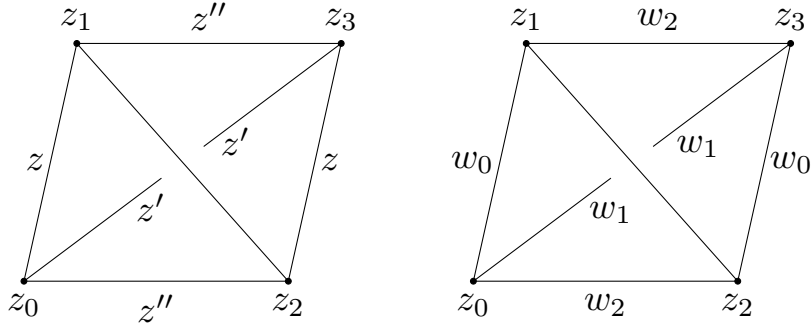
\begin{figure}[ht]
\begin{center}
\begin{tikzpicture}
\begin{scope}
\filldraw (0pt,0pt) circle (1.1pt)
          (100pt,0pt) circle (1.1pt)
          (120pt,90pt) circle (1.1pt)
          (20pt,90pt) circle (1.1pt);
\draw (0pt,0pt) -- node[scale=1.4,below]{$z''$} (100pt,0pt) --
node[scale=1.4,right=-2pt]{$z$} (120pt,90pt) -- node[scale=1.4,above]{$z''$}
(20pt,90pt) -- node[scale=1.4,left=-2pt]{$z$} (0pt,0pt) -- (120pt,90pt);
\fill[white] (60pt,45pt) circle (10pt);
\draw (100pt,0pt) -- (20pt,90pt);
\draw (0pt,0pt) node[scale=1.4,below] {$z_0$}
      (100pt,0pt) node[scale=1.4,below] {$z_2$}
      (120pt,90pt) node[scale=1.4,above] {$z_3$}
      (20pt,90pt) node[scale=1.4,above] {$z_1$}
      (48pt,28pt) node[scale=1.4] {$z'$}
      (80pt,53pt) node[scale=1.4] {$z'$};
\end{scope}
\begin{scope}[xshift=170pt]
\filldraw (0pt,0pt) circle (1.1pt)
          (100pt,0pt) circle (1.1pt)
          (120pt,90pt) circle (1.1pt)
          (20pt,90pt) circle (1.1pt);
\draw (0pt,0pt) -- node[scale=1.4,below]{$w_2$} (100pt,0pt) --
node[scale=1.4,right=-2pt]{$w_0$} (120pt,90pt) -- node[scale=1.4,above]{$w_2$}
(20pt,90pt) -- node[scale=1.4,left=-2pt]{$w_0$} (0pt,0pt) -- (120pt,90pt);
\fill[white] (60pt,45pt) circle (10pt);
\draw (100pt,0pt) -- (20pt,90pt);
\draw (0pt,0pt) node[scale=1.4,below] {$z_0$}
      (100pt,0pt) node[scale=1.4,below] {$z_2$}
      (120pt,90pt) node[scale=1.4,above] {$z_3$}
      (20pt,90pt) node[scale=1.4,above] {$z_1$}
      (52pt,26pt) node[scale=1.4] {$w_1$}
      (85pt,51pt) node[scale=1.4] {$w_1$};
\end{scope}
\end{tikzpicture}
\end{center}
\caption{Cross-ratio and log parameters of a flattened ideal
simplex}\label{fig:parameter}
\end{figure}

Let $z_0$, $z_1$, $z_2$, $z_3$ and $z_4$ be five distinct points in
$\widehat{\C}$ and let $\triangle_i$
denote the ideal simplices $(z_0,\dots,\widehat{z_i},\dots,z_4)$. Let
$(w_0^i,w_1^i,w_2^i)$ be flattenings of the simplices $\triangle_i$. Every edge
$[z_i z_j]$ belongs to exactly three of the $\triangle_i$ and therefore has
three associated log parameters. The flattenings are said to satisfy the
\emph{flattening condition} if for each edge the signed sum of the three
associated log parameters is zero. The sign is positive
if and only if $i$ is even.

From the definition we have that the flattening condition is equivalent to the
following ten equations:
\begin{equation}\label{eq:flat.cond}
\begin{aligned}[]
[z_0z_1]&: & w_0^2-w_0^3+w_0^4&=0 &  [z_0z_2]&: &-w_0^1-w_2^3+w_2^4&=0\\
[z_1z_2]&: & w_0^0-w_1^3+w_1^4&=0 &  [z_1z_3]&: & w_2^0+w_1^2+w_2^4&=0\\
[z_2z_3]&: & w_1^0-w_1^1+w_0^4&=0 &  [z_2z_4]&: & w_2^0-w_2^1-w_0^3&=0\\
[z_3z_4]&: & w_0^0-w_0^1+w_0^2&=0 &  [z_3z_0]&: &-w_2^1+w_2^2+w_1^4&=0\\
[z_4z_0]&: &-w_1^1+w_1^2-w_1^3&=0 &  [z_4z_1]&: & w_1^0+w_2^2-w_2^3&=0
\end{aligned}
\end{equation}

The \emph{extended pre-Bloch group} $\EpB{\C}$ is the free abelian group
generated by flattened ideal simplices subject to the relations:
\begin{description}
 \item[Lifted five term  relation] 
\begin{equation}\label{eq:l5tr}
 \sum_{i=0}^4(-1)^i(w_0^i,w_1^i,w_2^i)=0,
\end{equation}
if the flattenings satisfy the flattening condition.
 
\item[Transfer relation] 
\begin{equation*}
 [z;p,q]+[z;p',q']=[z;p,q']+[z;p',q].
\end{equation*}
\end{description}

The \emph{extended Bloch group} $\EBg{\C}$ is the kernel of the homomorphism
\begin{align*}
\widehat{\nu}\co\EpB{\C}&\to\antp{\C}\\
(w_0,w_1,w_2)&\mapsto w_0\wedge w_1.
\end{align*}

\begin{theorem}[{\cite[Theorem~2.6]{Neumann:EBGCCSC}}]\label{thm:EBG.HPSL}
There is an isomorphism
\begin{equation*}
 H_3(\PSL{\C};\Z)\cong \EBg{\C}.
\end{equation*}
\end{theorem}

\section{Computing $\beta_P(M)$ and $\beta_B(M)$ using an ideal triangulation of $M$}\label{sec:compute}

Let $M$ be a non-compact orientable complete hyperbolic $3$--manifold of
finite volume.
Let $\pi\co\hyp\to\dsubgr\backslash\hyp=M$ be the universal cover of
$M$. Then $M$ lifts to an exact, convex, fundamental, ideal polyhedron
$\mathsf{P}$ for $\dsubgr$ \cite[Theorem~11.2.1]{Ratcliffe:HypMan}. An ideal
triangulation of $M$ gives a decomposition of $\mathsf{P}$ into a finite number
of ideal tetrahedra  $(z_0^i,z_1^i,z_2^i,z_3^i)$, $i=1,\dots,n$. Since
$\mathcal{P}=\sets{g\mathsf{P}}{g\in\dsubgr}$ is an
exact tessellation of $\hyp$ \cite[Theorem~6.7.1]{Ratcliffe:HypMan}, this
decomposition of $\mathsf{P}$ gives an ideal triangulation of $\hyp$.

As in Subsection~\ref{ssec:fc} let $\mathcal{C}$ be the set of fixed points of
parabolic elements of $\dsubgr$ in $\partial\hyp=\widehat{\C}$ and consider
$\widehat{\hyp}=\hyp\cup\mathcal{C}$, which is the
result of adding the vertices of the ideal tetrahedra of the ideal
triangulation of $\hyp$.  Hence we can consider $\widehat{\hyp}$ as a
simplicial complex with $0$--simplices given by the elements of
$\mathcal{C}\subset\widehat{\C}$. 
The action of $\bar{G}$ on $\widehat{\hyp}$ induces an action of $\bar{G}$ on the tetrahedra of
the ideal triangulation of $\widehat{\hyp}$.
Taking the quotient of $\widehat{\hyp}$ by $\dsubgr$ we obtain $\widehat{M}$ and we
get an extension of the covering map $\pi$ to a map $\widehat{\pi}\co
\widehat{\hyp}\to\widehat{M}$.
The $\dsubgr$--orbits $(z_0,z_1,z_2,z_3)_\dsubgr$ of the tetrahedra
$(z_0,z_1,z_2,z_3)$ of the ideal triangulation of $\widehat{\hyp}$ correspond to
the tethahedra of the ideal triangulation of $\widehat{M}$. The $\dsubgr$--orbit
set $\widehat{\mathcal{C}}$ of $\mathcal{C}$ corresponds to the cusps points of
$M$, we suppose that $M$ has $d$ cups, so the cardinality of
$\widehat{\mathcal{C}}$ is $d$.

\subsection{Computation of $\beta_B(M)$}\label{ssec:bloch.inv}

Using the simplicial construction  of $\EGF{\bar{G}}$ given by
Proposition~\ref{prop:const3} we have that a model for $\EGF[\fami(\bar{B})]{\bar{G}}$ is
the geometric realization of the simplicial set whose $n$--simplices are the
ordered $(n+1)$--tuples $(z_0,\dots,z_n)$ of elements of
$\Xfami[\fami(\bar{B})]=\widehat{\C}$ and the $i$--th face (respectively, degeneracy)
of such a simplex is obtained by omitting (respectively, repeating) $z_i$. The
action of $\bar{g}\in \bar{G}$ on an $n$--simplex $(z_0,\dots,z_n)$ gives the simplex
$(\bar{g}z_0,\dots,\bar{g}z_n)$.

Considering $\widehat{\hyp}$ as the
geometric realization of its ideal triangulation and since its
vertices are elements in $\mathcal{C}\subset\widehat{\C}$ we have that the
$\dsubgr$--map $\psi_{\bar{B}}\co\widehat{\hyp}\to \EGF[\fami(\bar{B})]{\bar{G}}$ in diagram
\eqref{eq:cube} is given by the (geometric
realization of the) $\dsubgr$--equivariant simplicial map
\begin{align*}
\widehat{\hyp}&\xrightarrow{\psi_{\bar{B}}} \EGF[\fami(\bar{B})]{\bar{G}}\\
(z_0,z_1,z_2,z_3)&\mapsto (z_0,z_1,z_2,z_3).
\end{align*}
This induces the map $\widehat{\psi}_B\co\widehat{M}\to\BGF[\fami(\bar{B})]{\bar{G}}$ in
diagram \eqref{eq:cube}. Furthermore, this induces on simplicial $3$--chains the
homomorphism $(\widehat{\psi}_{\bar{B}})_*\co C_3(\widehat{M})\to
\CC[3]{B}(\BGF[\fami(\bar{B})]{\bar{G}})=\CC[3]{B}(\Xsubgr{\bar{B}})$ (see
\cite[Remark~4.25]{Arciniega-Cisneros:CRGCH}) which we can compose with homomorphism
\eqref{eq:sim.psiB} to get
\begin{gather}
C_3(\widehat{M})=C_3(\widehat{\hyp})_\dsubgr\xrightarrow{(\widehat{\psi}_{\bar{B}})_*}
\CC[3]{B}(\Xsubgr{\bar{B}})\xrightarrow{\sigma}\pB{\C}\notag\\
(z_0,z_1,z_2,z_3)_\dsubgr\mapsto
(z_0,z_1,z_2,z_3)_{\bar{G}}\to [z_{0}:z_{1}:z_{2}:z_{3}]\label{eq:comp.B}.
\end{gather}
where $(z_0,z_1,z_2,z_3)_\dsubgr$ (resp. $(z_0,z_1,z_2,z_3)_{G}$)
denotes the $\dsubgr$--orbit (respectively $\bar{G}$--orbit) of the 
$3$--simplex $(z_0,z_1,z_2,z_3)$ in $C_3(\widehat{\hyp})$ (respectively in
$\CC[3]{B}(\Xsubgr{\bar{B}})$) and $[z_{0}:z_{1}:z_{2}:z_{3}]$ is the cross-ratio
parameter of the ideal tetrahedron $(z_0,z_1,z_2,z_3)$.

Let $(z_0^i,z_1^i,z_2^i,z_3^i)_\dsubgr$, $i=1,\dots,n$, be
the ideal tetrahedra of the ideal triangulation of $\widehat{M}$ and let
$z_{i}=[z_0^i:z_1^i:z_2^i:z_3^i]\in\C$ be the cross-ratio parameter of the ideal
tetrahedron
$(z_0^i,z_1^i,z_2^i,z_3^i)$ for each $i$. Then the image of the fundamental
class  
$[\widehat{M}]$ under the homomorphism in homology given by \eqref{eq:comp.B} is
given by
\begin{gather*}
\homo[3](\widehat{M})\xrightarrow{(\widehat{\psi}_{\bar{B}})_*}
H_3([\bar{G}:\bar{B}];\Z)\xrightarrow[\cong]{\sigma}\pB{\C}\\
[\widehat{M}]=\Bigl[\sum_{i=1}^n(z_0^i,z_1^i,z_2^i,z_3^i)_\dsubgr\Bigr]\mapsto
\Bigl[\sum_{i=1}^n(z_0^i,z_1^i,z_2^i,z_3^i)_{\bar{G}}\Bigr]\mapsto\sum_{i=1}^n[z_i],
\end{gather*}
and we have that the invariant $\beta_B(M)$ under the isomorphism $\sigma$
corresponds to the Bloch invariant $\beta(M)$, see Cisneros-Molina--Jones
\cite[Theorem~6.1]{Cisneros-Jones:Bloch}.

\subsection{Computation of $\beta_P(M)$}\label{ssec:beta.P}

We want to give a simplicial description of the $\dsubgr$--map
$\psi_{\bar{P}}\co\widehat{\hyp}\to\EGF[\fami(\bar{P})]{\bar{G}}$ in diagram \eqref{eq:cube}. For this,
we also use the Simplicial Construction of Proposition~\ref{prop:const3} to give
a model for $\EGF[\fami(\bar{P})]{\bar{G}}$ as the geometric realization of the simplicial
set whose $n$--simplices are the ordered $(n+1)$--tuples 
of elements of $\Xfami[\fami(\bar{P})]=\Xsubgr{\bar{P}}$ (or $\Xfami[\fami(\bar{P})]=\Xspb$) (see \cite[Remark~4.25]{Arciniega-Cisneros:CRGCH}
and Subsection~\ref{ssec:XP}). The $i$--th face
(respectively, degeneracy) of such a simplex is obtained by omitting
(respectively, repeating) the $i$--th element. The action of $\bar{g}\in \bar{G}$ on an
$n$--simplex is the diagonal action. 

Since the vertices of $\widehat{\hyp}$ are elements in
$\mathcal{C}\subset\widehat{\C}$, to give a simplicial description of the
$\dsubgr$--map $\psi_{\bar{P}}\co\widehat{\hyp}\to\EGF[\fami(\bar{P})]{\bar{G}}$ is enough to give a
$\dsubgr$--map
\begin{equation*}
 \Phi\co\mathcal{C}\to\Xsubgr{\bar{P}},\qquad\text{or}\qquad \Phi\co\mathcal{C}\to\Xspb
\end{equation*}
and define
\begin{equation}\label{eq:hYtoEGFP}
\begin{aligned}
\widehat{\hyp}&\xrightarrow{\psi_{\bar{P}}} \EGF[\fami(\bar{P})]{\bar{G}}\\
(z_0,z_1,z_2,z_3)&\mapsto (\Phi(z_0),\Phi(z_1),\Phi(z_2),\Phi(z_3)).
\end{aligned}
\end{equation}

For $i=1,\dots,d$, every cusp point $\hat{c}_i\in\widehat{\mathcal{C}}$ of $M$
corresponds to a $\dsubgr$--orbit of $\mathcal{C}$. Choose $c_i\in\mathcal{C}$
in the $\dsubgr$--orbit corresponding to $\hat{c}_i\in\widehat{\mathcal{C}}$.
Now choose an element $\V_i\in(\bar{h}_{\bar{P}}^{\bar{B}})^{-1}(c_i)\subset\Xsubgr{\bar{P}}$ (or
$A_i\in(\bar{h}_{\bar{P}}^{\bar{B}})^{-1}(c_i)\subset\Xspb$) and define
\begin{equation}\label{eq:Phi}
\begin{aligned}
\Phi\co\mathcal{C}&\to\Xsubgr{\bar{P}} & &\text{or}&\Phi\co\mathcal{C}&\to\Xspb\\
c_i &\mapsto \V_i & &&  c_i &\mapsto A_i
\end{aligned}
\end{equation}
and extend $\dsubgr$--equivariantly by
\begin{equation*}
\Phi(\bar{g}\cdot c_i)=\bar{g}\cdot\V_i,\qquad\text{or}\qquad \Phi(\bar{g}\cdot c_i)=\bar{g}A_i\bar{g}^T.
\end{equation*}

\begin{remark}\label{rem:ci.vi.Ai}
Suppose that for every cusp point $\hat{c}_i\in\widehat{\mathcal{C}}$ we have
chosen $c_i\in\mathcal{C}\subset\widehat{\C}$ in the $\dsubgr$--orbit
corresponding to $\hat{c}_i$. Using homogeneous coordinates we can write
\begin{equation*}
 c_i=[z_i:w_i].
\end{equation*}
So, one way to choose $\V_i$ (or $A_i$) is given by
\begin{equation*}
\V_i=[z_i,w_i],\qquad\text{or}\qquad A_i=\begin{pmatrix}
                                         z_i^2 & z_iw_i\\ z_iw_i & w_i^2
                                         \end{pmatrix}.
\end{equation*}
\end{remark}

\begin{remark}\label{rem:Phi.ind}
The $\dsubgr$--isotropy subgroups of the $c_i$ are conjugates of the peripheral
subgroups $\dsubgr_i$ and they consist of parabolic elements, that is, elements
in a conjugate of $P$, and by Remark~\ref{rem:im.inv} they fix pointwise the
elements of $(\bar{h}_{\bar{P}}^{\bar{B}})^{-1}(c_i)$. On the other hand, by
Remark~\ref{rem:hypebolic.no.fix.c} no hyperbolic element in $\dsubgr$ has as
fixed point any point in $\mathcal{C}$. Therefore $\Phi$ is a well-defined
$\dsubgr$--equivariant map and the map $\psi_{\bar{P}}$ in \eqref{eq:hYtoEGFP} is a
well-defined $\dsubgr$--equivariant map. By property \eqref{it:G-htpy} of $\EGF[\fami(\bar{P})]{\bar{G}}$ any two such $\dsubgr$--maps are
$\dsubgr$--homotopic, so $\psi_{\bar{P}}$ is independent of the choice of the $c_i$ 
and their corresponding $\V_i$ (or $A_i$) up to $\dsubgr$--homotopy.
This induces the map $\widehat{\psi}_{\bar{P}}\co\widehat{M}\to\BGF[\fami(\bar{P})]{\bar{G}}$ in
diagram \eqref{eq:cube} and choosing different $c_i$ and $A_i$ we obtain
homotopic maps. Thus, this induces a canonical homomorphism in homology
$(\widehat{\psi}_{\bar{P}})_*\co \homo[3](\widehat{M})\to H_3([\bar{G}:\bar{P}];\Z)$ independent of
choices.
\end{remark} 

Let $(z_0^i,z_1^i,z_2^i,z_3^i)_\dsubgr$, $i=1,\dots,n$, be
the ideal tetrahedra of the ideal triangulation of $\widehat{M}$.
The image of the fundamental class $[\widehat{M}]$ under $(\widehat{\psi}_P)_*$
is given by
\begin{align}
\homo[3](\widehat{M})&\xrightarrow{(\widehat{\psi}_P)_*} H_3([\bar{G}:\bar{P}];\Z)\notag\\
[\widehat{M}]=\Bigl[\sum_{i=1}^n(z_0^i,z_1^i,z_2^i,z_3^i)_\dsubgr\Bigr]&\mapsto
\beta_P(M)=\Bigl[\sum_{i=1}^n\bigl(\Phi(z_0^i),\Phi(z_1^i),\Phi(z_2^i),
\Phi(z_3^i)\bigr)_{\bar{G}}\Bigr]\label{eq:form.beta.P}
\end{align}
obtaining an explicit formula for the invariant $\beta_P(M)$. 

\begin{remark}\label{rem:Fphi}
Notice that the cycle
\begin{equation}\label{eq:F.cycle}
\sum_{i=1}^n\bigl(\Phi(z_0^i),\Phi(z_1^i),\Phi(z_2^i),\Phi(z_3^i)\bigr)_{\bar{G}}
\end{equation}
can also be seen as a cycle in $\CC[3]{B}^{h_{\bar{P}}^{\bar{B}\neq}}(\Xsubgr{\bar{P}})$
and by Proposition~\ref{prop:tak.hUB.sub} it represents a class
\begin{equation}
 F_\Phi(M)\in H_3(\PSL{\C},\bar{P};\Z)
\end{equation}
which depends on the choice of the $\dsubgr$-map $\Phi$ in \eqref{eq:Phi}. By Corollary~\ref{cor:sc.c} 
the inclusion $\CC[3]{C}^{h_H^K\neq}(G/H)\to\CC[3]{C}(G/H)$ induces the canonical homomorphism
$H_3(\bar{G},\bar{P};\Z)\to H_3([\bar{G}:\bar{P}];\Z)$; under this homomorphism $F_\Phi(M)$ is mapped to
$\beta_P(M)$, so its image is independent of the choice of the map $\Phi$.
\end{remark}

\section{The homomorphism $\widehat{\sigma}\co H_3(\PSL{\C},\bar{P};\Z)\to\EBg{\C}$}\label{sec:maps}

In \cite{Zickert:VCSIR} Zickert defined a homomorphism
\begin{equation*}
 \Psi\co H_3(\PSL{\C},\bar{P};\Z)\to \EBg{\C},
\end{equation*}
which by Neumann's isomorphism $H_3(\PSL{\C};\Z)\cong\EBg{\C}$ (Theorem~\ref{thm:EBG.HPSL}) corresponds to the homomorphism \eqref{eq:Psi} mentioned in the Introduction, used to define the class $[M]_{PSL}\in H_3(\PSL{\C};\Z)$ for an hyperbolic $3$-manifold with cusps. 
To define this homomorphism, Zickert uses a complex of truncated simplices to compute the Takasu relative homology groups $H_\bullet(\PSL{\C},\bar{P};\Z)$
(see \S\ref{ssec:cts} below).

In this section we use Proposition~\ref{prop:tak.hUB.sub} to compute $H_\bullet(\PSL{\C},\bar{P};\Z)$ and 
following ideas in Dupont and Zickert \cite[\S3]{Dupont-Zickert:DFCCSC} we define a homomorphism
\begin{equation*}
 H_3(\bar{G},\bar{P};\Z)\to\EBg{\C},
\end{equation*}
which corresponds to Zickert's homomorphism $\Psi$ (Proposition~\ref{prop:Psi=tildesigma} below). 
We learned that this map was first defined by Inoue and Kabaya in \cite[\S7.1]{Inoue-Kabaya:QHCV} in the context of quandle homology.

We use the models for $\Xsubgr{U}$, $\Xsubgr{P}$, $\Xsubgr{B}$ and the explicit
$G$-maps between them described in Section~\ref{sec:GUPB}. We simplify notation
by setting 
\begin{align*}
h_U&=h_U^B\co\Xsubgr{U}\to\Xsubgr{B},\\
h_P&=h_P^B\co\Xsubgr{P}\to\Xsubgr{B}.
\end{align*}
Consider the $h_H$--subcomplexes with $H=U,P,B$:  
$\CC{C}^{h_U\neq}(\Xsubgr{U})$, $\CC{C}^{h_P\neq}(\Xsubgr{P})$ and
$\CC{C}^{\neq}(\Xsubgr{B})$ defined in Subsection~\ref{sssec:maps}.

Since $h_U^P$, $h_U$ and $h_P$ are $G$--equivariant they induce maps 
\begin{align*}
(h_U^P)_{*}\co\CC{C}^{h_U\neq}(\Xsubgr{U})&\to\CC{C}^{h_P\neq}(\Xsubgr{P}),\\
(h_U)_{*}\co\CC{C}^{h_U\neq}(\Xsubgr{U})&\to\CC{C}^{\neq}(\Xsubgr{B}),\\
(h_P)_{*}\co\CC{C}^{h_P\neq}(\Xsubgr{P})&\to\CC{C}^{\neq}(\Xsubgr{B}),
\end{align*}

We start defining a homomorphism $\CC[3]{C}^{h_U\neq}(\Xsubgr{U})\to\EpB{\C}$
which descends to a homomorphism 
\begin{equation*}
 \CC[3]{C}^{h_P\neq}(\Xsubgr{P})\to\EpB{\C}.
\end{equation*}

We assign to each $4$--tuple $(v_0,v_1,v_2,v_3)\in
\CC[3]{C}^{h_U\neq}(\Xsubgr{U})$ a combinatorial flattening of the ideal simplex
$\bigl(h_U(v_0),h_U(v_1),h_U(v_2),h_U(v_3)\bigr)$ in such a way that the
combinatorial flattenings assigned to tuples
$(v_0,\dots,\widehat{v_i},\dots,v_4)$ satisfy the flattening condition.

Given $v_i=(v_i^1,v_i^2)\in\Xsubgr{U}$ we denote by
\begin{equation*}
 \det(v_i,v_j)=\det\begin{pmatrix}
		    v_i^1 & v_i^2\\
		    v_j^1 & v_j^2
                    \end{pmatrix}=v_i^1v_j^2-v_i^2v_j^1.
\end{equation*}
Let $(v_0,v_1,v_2,v_3)\in \CC[3]{C}^{h_U\neq}(\Xsubgr{U})$. As was noticed in
\cite[Section~3.1]{Dupont-Zickert:DFCCSC}, the cross-ratio parameters $z$,
$\frac{1}{1-z}$ and $\frac{z-1}{z}$ of the simplex
$\bigl(h_U(v_0),h_U(v_1),h_U(v_2),h_U(v_3)\bigr)$ can be expressed in terms of
determinants
\begin{align}
 z&=\bigl[h_U(v_0):h_U(v_1):h_U(v_2):h_U(v_3)\bigr]
=\frac{\Bigl(\frac{v_0^1}{v_0^2}-\frac{v_3^1}{v_3^2}\Bigr)
\Bigl(\frac{v_1^1}{v_1^2} -\frac {v_2^1}{v_2^2}\Bigr)}
{\Bigl(\frac{v_0^1}{v_0^2}-\frac{v_2^1}{v_2^2}\Bigr)
\Bigl(\frac{ v_1^1}{ v_1^2}-\frac{v_3^1}{v_3^2}\Bigr)}
=\frac{\det(v_0,v_3)\det(v_1,v_2)}{\det(v_0,v_2)\det(v_1,v_3)},
\label{eq:z}\\[5pt]
 \frac{1}{1-z}&=\bigl[h_U(v_1):h_U(v_2):h_U(v_0):h_U(v_3)\bigr]
=\frac{\Bigl(\frac{v_1^1}{v_1^2}-\frac{v_3^1}{v_3^2}\Bigr)
\Bigl(\frac{v_2^1}{v_2^2} -\frac { v_0^1}{v_0^2}\Bigr)}
{\Bigl(\frac{v_1^1}{v_1^2}-\frac{v_0^1}{v_0^2}\Bigr)
\Bigl(\frac{v_2^1} { v_2^2} -\frac{v_3^1}{v_3^2}\Bigr)}
=\frac{\det(v_1,v_3)\det(v_0,v_2)}{\det(v_0,v_1)\det(v_2,v_3)},
\label{eq:1/1-z}\\[5pt]
 \frac{z-1}{z}&=\bigl[h_U(v_0):h_U(v_2):h_U(v_3):h_U(v_1)\bigr]
=\frac{\Bigl(\frac{v_0^1}{v_0^2}-\frac{v_1^1}{v_1^2}\Bigr)
\Bigl(\frac{v_2^1}{v_2^2} -\frac{ v_3^1}{v_3^2}\Bigr)}
{\Bigl(\frac{v_0^1}{v_0^2}-\frac{v_3^1}{v_3^2}\Bigr)
\Bigl(\frac{v_2^1} { v_2^2} -\frac{v_1^1}{v_1^2}\Bigr)}
=\frac{\det(v_0,v_1)\det(v_2,v_3)}{\det(v_0,v_3)\det(v_2,v_1)}.
\notag\label{eq:z-1/z}
\end{align}
Hence we also have
\begin{equation}\label{eq:1-z/z}
 \frac{1-z}{z}=-\bigl[h_U(v_0):h_U(v_2):h_U(v_3):h_U(v_1)\bigr]
=\frac{\Bigl(\frac{v_0^1}{v_0^2}-\frac{v_1^1}{v_1^2}\Bigr)
\Bigl(\frac{v_2^1}{v_2^2} -\frac{ v_3^1}{v_3^2}\Bigr)}
{\Bigl(\frac{v_0^1}{v_0^2}-\frac{v_3^1}{v_3^2}\Bigr)
\Bigl(\frac{v_1^1} { v_1^2} -\frac{v_2^1}{v_2^2}\Bigr)}
=\frac{\det(v_0,v_1)\det(v_2,v_3)}{\det(v_0,v_3)\det(v_1,v_2)}.
\end{equation}
We have that 
\begin{equation*}
 h_U(v_i)\neq h_U(v_j)\Leftrightarrow
\frac{v_i^1}{v_i^2}-\frac{v_j^1}{v_j^2}\neq0\Leftrightarrow\frac{
v_i^1v_j^2-v_i^2v_j^1}{v_i^2v_j^2}\neq0\Leftrightarrow\det(v_i,v_j)\neq0.
\end{equation*}
Then all the previous determinants are non-zero.

We define
\begin{equation}\label{eq:det2}
 \dH{v_i}{v_j}=\frac{1}{2}\Log\det(v_i,v_j)^2.
\end{equation}
Using formulas \eqref{eq:z}, \eqref{eq:1/1-z}, and \eqref{eq:1-z/z}, we assign a
flattening to $(v_0,v_1,v_2,v_3)\in \CC[3]{C}^{h_U\neq}(\Xsubgr{U})$ by setting
\begin{equation}\label{eq:flattening}
\begin{aligned}
\widetilde{w}_0(v_0,v_1,v_2,v_3)&=\dH{v_0}{v_3}+\dH{v_1}{v_2}-\dH{v_0}{v_2}-\dH{
v_1}{v_3},\\
\widetilde{w}_1(v_0,v_1,v_2,v_3)&=\dH{v_0}{v_2}+\dH{v_1}{v_3}-\dH{v_0}{v_1}-\dH{
v_2,}{v_3},\\
\widetilde{w}_2(v_0,v_1,v_2,v_3)&=\dH{v_0}{v_1}+\dH{v_2}{v_3}-\dH{v_0}{v_3}-\dH{
v_1}{v_2}.
\end{aligned}
\end{equation}
Recall that for $w\in\C^\times$ we have that
\begin{equation}\label{eq:w.2}
 \frac{1}{2}\Log w^2=\begin{cases}
         \Log w + \pi i & \text{if $\Arg w\in(-\pi,-\frac{\pi}{2}]$,}\\
	 \Log w & \text{if $\Arg w\in(-\frac{\pi}{2},\frac{\pi}{2}]$,}\\
	 \Log w - \pi i & \text{if $\Arg w\in(\frac{\pi}{2},\pi]$.}
        \end{cases}
\end{equation}
By the addition theorem of the logarithm \cite[Ch.~2-\S3.4]{Ahlfors:CompAna} and
\eqref{eq:w.2} we have that:
\begin{equation*}
\widetilde{w}_0=\Log z + ik\pi,\quad \widetilde{w}_1=\Log
\Bigl(\frac{1}{1-z}\Bigr)+ il\pi,\quad \widetilde{w}_2=\Log
\Bigl(\frac{1-z}{z}\Bigr)+ im\pi,
\end{equation*}
for some integers $k$, $l$ and $m$. Hence, $\widetilde{w}_0$, $\widetilde{w}_1$
and $\widetilde{w}_2$ are respectively logarithms of the cross-ratio parameters
$z$, $\frac{1}{1-z}$ and $\frac{z-1}{z}$ up to multiples of $\pi i$ 
and clearly we have that $\widetilde{w}_0+\widetilde{w}_1+\widetilde{w}_2=0$.
Therefore $(\widetilde{w}_0,\widetilde{w}_1,\widetilde{w}_2)$ is a flattening in
$\EpB{\C}$ of the ideal simplex 
$\bigl(h_U(v_0),h_U(v_1),h_U(v_2),h_U(v_3)\bigr)$.

This defines a homomorphism
\begin{equation}\label{eq:t.sigma}
\begin{aligned}
\widetilde{\sigma}\co \CC[3]{C}^{h_U\neq}(\Xsubgr{U})&\to\EpB{\C}\\
 (v_0,v_1,v_2,v_3)&\mapsto(\widetilde{w}_0,\widetilde{w}_1,\widetilde{w}_2).
\end{aligned}
\end{equation}

\begin{lemma}\label{lem:det.inv}
 Let $v_i,v_j\in\Xsubgr{U}$. Then $\det(v_i,v_j)$ is invariant under the action
of $G$ on $\Xsubgr{U}$.
\end{lemma}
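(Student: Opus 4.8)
The plan is to reduce the claim to the multiplicativity of the determinant together with the fact that every element of $G=\SL{\C}$ has determinant one. First I would recall that, under the model $\Xsubgr{U}=\C^2\setminus\{(0,0)\}$ established above, the group $G$ acts by left matrix multiplication, so that for $g\in G$ and a vector $v\in\Xsubgr{U}$ (written as a column) the action is simply $g\cdot v=gv$.

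Next I observe that the quantity $\det(v_i,v_j)=v_i^1v_j^2-v_i^2v_j^1$ is, up to a transposition, the determinant of the $2\times 2$ matrix $A=[\,v_i\mid v_j\,]$ whose two columns are the vectors $v_i$ and $v_j$. Indeed, the matrix appearing in the definition of $\det(v_i,v_j)$ has rows $v_i$ and $v_j$, hence it is exactly $A^T$, and since $\det A^T=\det A$ we have $\det(v_i,v_j)=\det A$. This is the only bookkeeping point that needs care: the definition arranges $v_i,v_j$ as rows, while the action is most naturally described on columns.

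Acting by $g$ replaces the pair $(v_i,v_j)$ by $(gv_i,gv_j)$, and the matrix whose columns are $gv_i$ and $gv_j$ is precisely the product $gA$. Therefore
\begin{equation*}
\det(g\cdot v_i,\,g\cdot v_j)=\det(gA)=\det(g)\det(A)=\det(A)=\det(v_i,v_j),
\end{equation*}
where the third equality uses $\det g=1$ for $g\in\SL{\C}$. This establishes the invariance. There is no substantive obstacle here; the argument is a one-line computation, and the statement is recorded separately only because it will be used repeatedly to show that the log parameters $\dH{v_i}{v_j}$, and hence the flattening $\widetilde{\sigma}$, are well defined on $G$--orbits.
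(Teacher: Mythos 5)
Your proof is correct and is in substance the same as the paper's: the paper establishes the invariance by expanding $\det(g\cdot v_i,g\cdot v_j)$ entrywise and factoring out $ad-bc=1$, which is precisely an inline verification, for $2\times 2$ matrices, of the identity $\det(gA)=\det(g)\det(A)$ that you invoke. Your row-versus-column bookkeeping via $\det A^T=\det A$ is a harmless repackaging of the same one-line computation.
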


\begin{proof}
Let $\begin{pmatrix}
      a&b\\c&d
     \end{pmatrix}\in SL_2(\C)$ and set
\begin{equation*}
 \bar{v}_i=(\bar{v}_i^1,\bar{v}_i^2)=\begin{pmatrix}
      a&b\\c&d
     \end{pmatrix}\begin{pmatrix}
                   v_i^1\\ v_i^2
                  \end{pmatrix}=(av_i^1+bv_i^2, cv_i^1+dv_i^2)
\end{equation*}
We have that
\begin{equation}
\begin{aligned}\label{eq:inv.G}
 \det( \bar{v}_i, \bar{v}_j)&=\det\begin{pmatrix}
                 \bar{v}_i^1 &  \bar{v}_i^2\\  \bar{v}_j^1 &  \bar{v}_j^2
               \end{pmatrix}=  \bar{v}_i^1 \bar{v}_j^2- \bar{v}_i^2
\bar{v}_j^1\\
            &=(av_i^1+bv_i^2)(cv_j^1+dv_j^2)-(av_j^1+bv_j^2)(cv_i^1+dv_i^2)\\
            &=(ad-bc)(v_i^1v_j^2-v_i^2v_j^1)\qquad\text{since $ad-bc=1$}\\
            &=\det(v_i,v_j).
\end{aligned}
\end{equation}
\end{proof}
Hence the homomorphism $\widetilde{\sigma}$ descends to the quotient by the
action of $SL_2(\C)$
\begin{equation*}
 \widetilde{\sigma}\co \CC[3]{B}^{h_U\neq}(\Xsubgr{U})\to\EpB{\C}.
\end{equation*}

Now suppose that
$(\widetilde{w}_0^0,\widetilde{w}_1^0,\widetilde{w}_2^0)$,\dots,$(\widetilde{w}
_0^4,\widetilde{w}_1^4,\widetilde{w}_2^4)$ are flattenings defined as above for
the simplices 
$\bigl(h_U(v_0),\dots,\widehat{h_U(v_i)},\dots,h_U(v_4)\bigr)$. We must check
that these flattenings satisfy the flattening condition, that is, we have to
check that the ten equations \eqref{eq:flat.cond} are satisfied.
We check the first equation, the others are similar:
\begin{description}
\item[{$[z_0z_1]$}]
\begin{align*}
 \widetilde{w}_0^2&=\dH{v_0}{v_4}+\dH{v_1}{v_3}-
\dH{v_0}{v_3}-\dH{v_1}{v_4},\\
-\widetilde{w}_0^3&=-\dH{v_0}{v_4}-\dH{v_1}{v_2}+
\dH{v_0}{v_2}+\dH{v_1}{v_4},\\
 \widetilde{w}_0^4&=\dH{v_0}{v_3}+\dH{v_1}{v_2}-\dH{v_0}{v_2}-\dH{v_1}{v_3},
\end{align*}
\end{description}
Having verified all the ten equations, it now follows from \cite[Theorem
2.8]{Dupont-Zickert:DFCCSC}  or \cite[Lemma~3.4]{Neumann:EBGCCSC} that
$\widetilde{\sigma}$ sends
boundaries to zero and we obtain a homomorphism
\begin{equation*}
 \widetilde{\sigma}\co H_3(\CC{B}^{h_U\neq}(\Xsubgr{U}))\to\EpB{\C}.
\end{equation*}
By Proposition~\ref{prop:tak.hUB.sub} we get a homomorphism from Takasu relative homology
\begin{equation*}
 \widetilde{\sigma}\co H_3(\SL{\C},U;\Z)\to\EpB{\C}.
\end{equation*}

\subsection{The homomorphism $\widetilde{\sigma}$ descends to
$\CC[3]{C}^{h_P\neq}(\Xsubgr{P})$}\label{ssec:desc}

Given $v_i=(v_i^1,v_i^2)\in\Xsubgr{U}$ we denote by
$\V_i=h_U^P(v_i)=[v_i^1,v_i^2]$ its
class in $\Xsubgr{P}$.

\begin{remark}\label{rem:det2def}
Notice that if $v_i=(v_i^1,v_i^2)\in\Xsubgr{U}$, then
\begin{align}
 \det(v_i,v_j)&=\det\begin{pmatrix}
		    v_i^1 & v_i^2\\
		    v_j^1 & v_j^2
                    \end{pmatrix}=v_i^1v_j^2-v_i^2v_j^1=
\det(-v_i,-v_j),\label{eq:det++}\\[5pt]
\intertext{but on the other hand}
 \det(-v_i,v_j)&=\det\begin{pmatrix}
		    -v_i^1 & -v_i^2\\
		     v_j^1 & v_j^2
                    \end{pmatrix}=-v_i^1v_j^2+v_i^2v_j^1=
-\det(v_i,v_j).\label{eq:det-+}
\end{align}
Thus, the quantity $\det(\V_i,\V_j)$ is just well-defined up to sign. However,
its square $\det(\V_i,\V_j)^2$ is well-defined.
\end{remark}

By Lemma~\ref{lem:det.inv}, \eqref{eq:det++} and \eqref{eq:det-+} we have:
\begin{lemma}
 Let $\V_i,\V_j\in\Xsubgr{P}$. Then $\det(\V_i,\V_j)^2$ is invariant under the
action of $G$ on $\Xsubgr{P}$.
\end{lemma}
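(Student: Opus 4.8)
The plan is to reduce the statement to the already-established $G$-invariance of the determinant on $\Xsubgr{U}$ (Lemma~\ref{lem:det.inv}), using the fact that squaring kills the sign ambiguity inherent in passing from $\Xsubgr{U}$ to $\Xsubgr{P}=\C^2\setminus\{(0,0)\}/\Z_2$. First I would choose representatives: pick $v_i,v_j\in\Xsubgr{U}$ with $h_U^P(v_i)=\V_i$ and $h_U^P(v_j)=\V_j$. Since the action of $g\in G$ on $\Xsubgr{P}$ is defined by $g\cdot\V_i=[g\,v_i]$, the vector $g\,v_i\in\Xsubgr{U}$ is a representative of $g\cdot\V_i$, and likewise $g\,v_j$ represents $g\cdot\V_j$.

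Next I would compute directly with these representatives. By the definition of $\det(\V_i,\V_j)^2$ as the square of the determinant of any choice of representatives, we have
\begin{equation*}
\det(g\cdot\V_i,g\cdot\V_j)^2=\det(g\,v_i,g\,v_j)^2=\det(v_i,v_j)^2=\det(\V_i,\V_j)^2,
\end{equation*}
where the middle equality is exactly Lemma~\ref{lem:det.inv}. This gives the desired $G$-invariance once we know the quantity is representative-independent.

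The only point requiring care is well-definedness, namely that $\det(\V_i,\V_j)^2$ does not depend on the choice of lifts $v_i,v_j$; this is precisely the content of Remark~\ref{rem:det2def}. Replacing a representative by its antipode changes $\det(v_i,v_j)$ by a sign according to \eqref{eq:det++} and \eqref{eq:det-+} (an even number of sign changes when both are flipped, and a single sign change when one is flipped), so in every case the sign is absorbed upon squaring and $\det(\V_i,\V_j)^2$ is intrinsic to the pair $(\V_i,\V_j)$. I expect no real obstacle here: the argument is a one-line application of Lemma~\ref{lem:det.inv} combined with the sign bookkeeping of Remark~\ref{rem:det2def}, and the sole subtlety is to invoke that remark to guarantee the squared determinant is genuinely a function on $\Xsubgr{P}\times\Xsubgr{P}$ before asserting its invariance.
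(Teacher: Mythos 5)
Your proof is correct and follows exactly the route the paper takes: the paper derives this lemma directly from Lemma~\ref{lem:det.inv} together with the sign analysis \eqref{eq:det++}--\eqref{eq:det-+} of Remark~\ref{rem:det2def}, which is precisely your combination of lifting to representatives in $\Xsubgr{U}$, applying the $G$--invariance of $\det$ there, and noting that squaring absorbs the sign ambiguity. Your write-up merely makes explicit the well-definedness step that the paper leaves implicit in the remark.
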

So, we define
\begin{equation}\label{eq:det2.vi}
 \dH{\V_i}{\V_j}=\frac{1}{2}\Log\det(\V_i,\V_j)^2.
\end{equation}

Let $(\V_0,\V_1,\V_2,\V_3)\in \CC[3]{C}^{\tilde{h}\neq}(\Xsubgr{P})$. The
homomorphism $\widetilde{\sigma}$ given in \eqref{eq:t.sigma} descends to a well-defined homomorphism
\begin{align*}
 \widehat{\sigma}\co \CC[3]{C}^{h_P\neq}(\Xsubgr{P})&\to\EpB{\C}\\
(\V_0,\V_1,\V_2,\V_3)&\mapsto(\widetilde{w}_0,\widetilde{w}_1,\widetilde{w}_2).
\end{align*}
by assigning to $(\V_0,\V_1,\V_2,\V_3)$ the flattening of the ideal simplex
\begin{equation*}
 \bigl(h_P(\V_0),h_P(\V_1),h_P(\V_2),h_P(\V_3)\bigr)=\bigl(h_U(v_0),h_U(v_1),
h_U(v_2),h_U(v_3)\bigr)
\end{equation*}
given by \eqref{eq:flattening}.
By Remark~\ref{rem:P.bG} the action of $G=\SL{\C}$ on $\CC[3]{C}^{h_P\neq}(\Xsubgr{P})$ descends to an action of $\bar{G}=\PSL{\C}$, we use the notation $\CC[3]{C}^{h_{\bar{P}}\neq}(\Xsubgr{\bar{P}})$ to emphasize the action of $\bar{G}$. 
Hence we obtain a homomorphism
\begin{equation*}
 \widehat{\sigma}\co H_3(\CC{B}^{h_{\bar{P}}\neq}(\Xsubgr{\bar{P}}))\to\EpB{\C}.
\end{equation*}
By Proposition~\ref{prop:tak.hUB.sub} we get a homomorphism from Takasu relative homology
\begin{equation*}
 \widehat{\sigma}\co H_3(\PSL{\C},\bar{P};\Z)\to\EpB{\C}.
\end{equation*}

\begin{remark}
 Notice that we do not get a homomorphism from $H_3(\SL{\C},P;\Z)$, since $P$ is not $B$-malnormal in $\SL{\C}$.
\end{remark}

\begin{proposition}\label{prop:im.EBg}
 The image of $\widehat{\sigma}\co
H_3(\CC{B}^{h_{\bar{P}}\neq}(\Xsubgr{\bar{P}}))\to\EpB{\C}$ is in $\EBg{\C}$.
\end{proposition}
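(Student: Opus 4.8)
The plan is to show that the composite homomorphism $\widehat{\nu}\circ\widetilde{\sigma}$ vanishes on $3$--cycles, since by definition $\EBg{\C}=\ker\widehat{\nu}$. The mechanism will be to exhibit $\widehat{\nu}\circ\widetilde{\sigma}$, at the chain level, as the composition of the boundary operator $\partial_3$ with an auxiliary homomorphism defined on $2$--chains; on a cycle the boundary vanishes and the conclusion is immediate. This is the extended analogue of the statement that the Bloch invariant lies in the Bloch group, and it parallels the reasoning for the map $\nu$ of \eqref{eq:lambda}.

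Concretely, I would define a homomorphism $\mu\co\CC[2]{C}^{h_P\neq}(\Xsubgr{P})\to\antp{\C}$ on generators by
\[
\mu(\V_p,\V_q,\V_r)=\bigl(\dH{\V_p}{\V_q}-\dH{\V_p}{\V_r}\bigr)\wedge\bigl(\dH{\V_p}{\V_r}-\dH{\V_q}{\V_r}\bigr),
\]
where $\dH{\V_i}{\V_j}=\tfrac{1}{2}\Log\det(\V_i,\V_j)^2$ as in \eqref{eq:det2.vi}. This is well defined: every face of a generator of $\CC[3]{C}^{h_P\neq}(\Xsubgr{P})$ again has pairwise distinct images under $h_P$, so the relevant determinants are non-zero and the logarithms make sense. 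Moreover $\dH{\V_i}{\V_j}$ depends only on $\det(\V_i,\V_j)^2$, which is independent of the choice of representative and $G$--invariant by Lemma~\ref{lem:det.inv} together with \eqref{eq:det++} and \eqref{eq:det-+}; hence $\mu$ descends to $\CC[2]{B}^{h_P\neq}(\Xsubgr{P})$.

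The key step is the chain-level identity
\[
\widehat{\nu}\bigl(\widetilde{\sigma}(\V_0,\V_1,\V_2,\V_3)\bigr)=\sum_{i=0}^{3}(-1)^i\,\mu\bigl(d_i(\V_0,\V_1,\V_2,\V_3)\bigr)=\mu\bigl(\partial_3(\V_0,\V_1,\V_2,\V_3)\bigr).
\]
To verify it one writes $\widehat{\nu}(\widetilde{\sigma}(\V_0,\V_1,\V_2,\V_3))=\widetilde{w}_0\wedge\widetilde{w}_1$ with $\widetilde{w}_0,\widetilde{w}_1$ as in \eqref{eq:flattening}, then expands both sides as $\Z$--bilinear combinations of the six edge quantities $\dH{\V_i}{\V_j}$ wedged in pairs. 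The diagonal contributions $\dH{\V_i}{\V_j}\wedge\dH{\V_i}{\V_j}$ vanish, and a direct comparison shows that the twelve surviving cross terms on the two sides coincide. This bookkeeping is the only genuine computation and is the main obstacle; I note that the $\pi i$--corrections that distinguish $\widetilde{w}_k$ from honest logarithms are irrelevant here, because $\widehat{\nu}$ is evaluated directly on the $\dH{}{}$--combinations rather than on the cross-ratio parameters.

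Granting the identity, both $\widehat{\nu}\circ\widetilde{\sigma}$ and $\mu\circ\partial_3$ are homomorphisms agreeing on generators, so $\widehat{\nu}\circ\widetilde{\sigma}=\mu\circ\partial_3$ on all of $\CC[3]{B}^{h_P\neq}(\Xsubgr{P})$. If $\sigma$ is a cycle representing a class in $H_3(\CC{B}^{h_P\neq}(\Xsubgr{P}))$, then $\partial_3\sigma=0$, whence $\widehat{\nu}(\widetilde{\sigma}(\sigma))=\mu(\partial_3\sigma)=0$. Therefore $\widetilde{\sigma}(\sigma)\in\ker\widehat{\nu}=\EBg{\C}$, which is exactly the assertion.
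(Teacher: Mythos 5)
Your proposal is correct and is essentially the paper's own proof: the paper likewise defines a map $\mu$ on $\CC[2]{B}^{h_P\neq}(\Xsubgr{P})$ and checks that $\widehat{\nu}\circ\widetilde{\sigma}=\mu\circ\partial$, so cycles land in $\ker\widehat{\nu}=\EBg{\C}$. Indeed, your factored expression $\bigl(\dH{\V_p}{\V_q}-\dH{\V_p}{\V_r}\bigr)\wedge\bigl(\dH{\V_p}{\V_r}-\dH{\V_q}{\V_r}\bigr)$ expands, after discarding the vanishing term $\dH{\V_p}{\V_r}\wedge\dH{\V_p}{\V_r}$, to exactly the paper's $\mu$, and your twelve-term verification of the chain-level identity checks out.
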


\begin{proof}
 Define a map $\mu\co\CC[2]{B}^{h_{\bar{P}}\neq}(\Xsubgr{\bar{P}})\to\antp{\C}$ by
\begin{equation*}
 (\V_0,\V_1,\V_2)_G\mapsto
\dH{\V_0}{\V_1}\wedge\dH{\V_0}{\V_2}-\dH{\V_0}{\V_1}\wedge\dH{\V_1}{\V_2}+\dH{
\V_0}{\V_2}\wedge\dH{\V_1}{\V_2}.
\end{equation*}
Recall that the extended Bloch group $\EBg{\C}$ is the kernel of the
homomorphism
\begin{align*}
 \widehat{\nu}\co\EpB{\C}&\to\antp{\C}\\
(w_0,w_1,w_1)&\mapsto w_0\wedge w_1.
\end{align*}
A straightforward computation shows that the following diagram
\begin{equation*}
 \xymatrix{
\CC[3]{B}^{h_{\bar{P}\neq}}(\Xsubgr{\bar{P}})\ar[d]^{\partial}\ar[r]^{\widehat{\sigma}} &
\EpB{\C}\ar[d]^{\widehat{\nu}}\\
\CC[2]{B}^{h_{\bar{P}\neq}}(\Xsubgr{\bar{P}})\ar[r]^{\mu} & \antp{\C}
}
\end{equation*}
is commutative.
This means that cycles are mapped to $\EBg{\C}$ as desired.
\end{proof}

Therefore $\widehat{\sigma}\co H_3(\CC{B}^{h_{\bar{P}\neq}}(\Xsubgr{\bar{P}}))\to\EBg{\C}$. 
Then by Proposition~\ref{prop:tak.hUB.sub} we have a homomorphism
\begin{equation}\label{eq:tilde.sigma}
\widehat{\sigma}\co H_3(\PSL{\C},\bar{P};\Z)\to\EBg{\C}.
\end{equation}

\subsection{Using $\Xsp$}

Sometimes is useful to express the homomorphism $\widehat{\sigma}$ using
$\Xspb$ instead of $\Xsubgr{\bar{P}}$ since with $\Xspb$ we do not have to worry about
equivalence classes and representatives. Again, we simplify notation by setting
\begin{equation*}
 \bar{h}_{\bar{P}}=\bar{h}_{\bar{P}}^{\bar{B}}\co\Xspb\to\Xsubgr{\bar{B}}.
\end{equation*}

Let $A_i, A_j\in \Xsp$, then we have that
\begin{equation*}
A_i=\begin{pmatrix}
r_i & t_i\\t_i & s_i
\end{pmatrix},\qquad r_is_i=t_i^2.
\end{equation*}
Define
\begin{equation*}
 DS(A_i,A_j)=r_is_j-2t_it_j+r_js_i.
\end{equation*}

Recall from Corollary~\ref{cor:iso.P} that the $\bar{G}$--isomorphism between
$\Xsubgr{\bar{P}}$ and $\Xspb$ is given by
\begin{align*}
\varrho\co\Xsubgr{\bar{P}} &\to \Xspb,\\[5pt]
\begin{bmatrix}
 u\\v
\end{bmatrix}
&\leftrightarrow \begin{pmatrix}
                      u^2 & uv \\ uv & v^2
                     \end{pmatrix}.
\end{align*}

\begin{lemma}\label{lem:dH=DS}
Let $\V_i$ and $\V_j$ in $\Xsubgr{\bar{P}}$. Then
\begin{equation*}
 DS\bigl(\varrho(\V_i),\varrho(\V_j)\bigr)=\det(\V_i,\V_j)^2.
\end{equation*}
\end{lemma}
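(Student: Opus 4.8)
The plan is to establish the identity by a direct computation after passing to representatives. First I would pick a representative $(u_i,v_i)\in\Xsubgr{U}$ of the class $\V_i=[u_i,v_i]\in\Xsubgr{P}$, and likewise $(u_j,v_j)$ for $\V_j$. By the definition of the determinant pairing together with Remark~\ref{rem:det2def}, the quantity $\det(\V_i,\V_j)^2=(u_iv_j-v_iu_j)^2$ does not depend on the chosen representatives, so it suffices to verify the asserted equality for a single choice.

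Next I would expand both sides explicitly. On the one hand,
\begin{equation*}
\det(\V_i,\V_j)^2=(u_iv_j-v_iu_j)^2=u_i^2v_j^2-2u_iv_iu_jv_j+v_i^2u_j^2.
\end{equation*}
On the other hand, by the description of $\varrho$ in Corollary~\ref{cor:iso.P} we have
\begin{equation*}
\varrho(\V_i)=\begin{pmatrix} u_i^2 & u_iv_i\\ u_iv_i & v_i^2\end{pmatrix},\qquad
\varrho(\V_j)=\begin{pmatrix} u_j^2 & u_jv_j\\ u_jv_j & v_j^2\end{pmatrix},
\end{equation*}
so that, in the notation for elements of $\Xsp$, one has $r_i=u_i^2$, $t_i=u_iv_i$, $s_i=v_i^2$, and similarly for the index $j$. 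Substituting these into the definition of $DS$ yields
\begin{equation*}
DS\bigl(\varrho(\V_i),\varrho(\V_j)\bigr)=r_is_j-2t_it_j+r_js_i=u_i^2v_j^2-2u_iv_iu_jv_j+u_j^2v_i^2.
\end{equation*}
Comparing the two displays term by term gives the claimed equality.

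There is essentially no obstacle here: the only point worth noting is that the statement is manifestly consistent, since $DS$ is assembled from the entries of $\varrho(\V_i)$, which are well-defined functions of the class $\V_i$ (being quadratic in the representative), while the square on the right-hand side resolves the sign ambiguity of $\det(\V_i,\V_j)$ recorded in Remark~\ref{rem:det2def}. Thus no separate well-definedness verification is required beyond matching the two expansions.
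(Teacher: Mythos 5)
Your proposal is correct and follows essentially the same route as the paper: choose representatives, write out $\varrho(\V_i)$ and $\varrho(\V_j)$, and verify by direct expansion that $DS\bigl(\varrho(\V_i),\varrho(\V_j)\bigr)=u_i^2v_j^2-2u_iv_iu_jv_j+u_j^2v_i^2=(u_iv_j-u_jv_i)^2=\det(\V_i,\V_j)^2$. The only difference is your explicit remark on independence of representatives, which the paper leaves implicit (it is covered by Remark~\ref{rem:det2def}).
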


\begin{proof}
Let $\V_i=[u_i,v_i]$ and $\V_j=[u_j,v_j]$ in $\Xsubgr{P}$. We have that
\begin{equation*}
\varrho(\V_i)=\begin{pmatrix}
      u_i^2 & u_iv_i\\
      u_iv_i& v_i^2
     \end{pmatrix},\quad \varrho(\V_j)=\begin{pmatrix}
                         u_j^2 & u_jv_j\\
                         u_jv_j& v_j^2                  
                         \end{pmatrix}
\end{equation*}
Then
\begin{equation*}
 DS\bigl(\varrho(\V_i),\varrho(\V_j)\bigr)
=u_i^2v_j^2-2u_iv_iu_jv_j+u_j^2v_i^2=(u_iv_j-u_jv_i)^2=\det(\V_i,\V_j)^2.
\end{equation*}
\end{proof}

Combining Corollary~\ref{cor:iso.P} and Lemma~\ref{lem:dH=DS} we get the
following corollary which
can also be proved with a straightforward but tedious computation.

\begin{corollary}\label{prop:DS.G}
 Let $A_i,A_j\in\Xspb$ and $\bar{g}\in \bar{G}$. Then $DS(A_i,A_j)$ is $\bar{G}$--invariant, that
is,
\begin{equation*}
 DS(\bar{g} A_i\bar{g}^T,\bar{g} A_j\bar{g}^T)=DS(A_i,A_j).
\end{equation*}
\end{corollary}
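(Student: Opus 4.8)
The plan is to avoid the tedious direct expansion and instead transport the already-established $G$--invariance of $\det(\V_i,\V_j)^2$ on $\Xsubgr{P}$ over to $\Xsp$ through the equivariant isomorphism $\varrho$ of Corollary~\ref{cor:iso.P}, using Lemma~\ref{lem:dH=DS} as the dictionary translating $DS(\cdot,\cdot)$ into $\det(\cdot,\cdot)^2$.

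First I would use that $\varrho\co\Xsubgr{P}\to\Xsp$ is a bijection to write given elements $A_i,A_j\in\Xsp$ as $A_i=\varrho(\V_i)$ and $A_j=\varrho(\V_j)$ for suitable $\V_i,\V_j\in\Xsubgr{P}$. Since $\varrho$ is $G$--equivariant and the action on $\Xsp$ is $g\cdot S=gSg^T$, this yields $gA_ig^T=g\cdot\varrho(\V_i)=\varrho(g\cdot\V_i)$ and likewise $gA_jg^T=\varrho(g\cdot\V_j)$.

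Then the argument is a short chain of identities. Applying Lemma~\ref{lem:dH=DS} to the pair $g\cdot\V_i,\,g\cdot\V_j$ gives
\[
DS(gA_ig^T,gA_jg^T)=DS\bigl(\varrho(g\cdot\V_i),\varrho(g\cdot\V_j)\bigr)=\det(g\cdot\V_i,g\cdot\V_j)^2.
\]
By the $G$--invariance of $\det(\V_i,\V_j)^2$ on $\Xsubgr{P}$ (which follows from Lemma~\ref{lem:det.inv} together with the sign computations \eqref{eq:det++} and \eqref{eq:det-+}), the right-hand side equals $\det(\V_i,\V_j)^2$. Applying Lemma~\ref{lem:dH=DS} once more in the reverse direction, $\det(\V_i,\V_j)^2=DS(\varrho(\V_i),\varrho(\V_j))=DS(A_i,A_j)$, which is exactly the claim.

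There is essentially no obstacle here, since the content has already been packaged into Corollary~\ref{cor:iso.P} and Lemma~\ref{lem:dH=DS}, making the corollary purely formal. The only point requiring care is that $\det(\V_i,\V_j)$ itself is ambiguous up to sign on $\Xsubgr{P}$, so every step must be phrased in terms of the well-defined square $\det(\V_i,\V_j)^2$; this is precisely why $DS$ is defined to match the square rather than the determinant. The alternative route --- expanding $DS(gA_ig^T,gA_jg^T)$ directly in the entries $a,b,c,d$ of $g$ and the entries $r_i,s_i,t_i$ of the $A_i$ and collecting terms using $ad-bc=1$ and $r_is_i=t_i^2$ --- would prove the same statement but is exactly the computation the conceptual argument sidesteps.
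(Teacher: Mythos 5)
Your proof is correct and is essentially the paper's own argument: the paper derives the corollary by combining Corollary~\ref{cor:iso.P} with Lemma~\ref{lem:dH=DS} (together with the already-established $G$--invariance of $\det(\V_i,\V_j)^2$ on $\Xsubgr{P}$), exactly the chain of identities you wrote, and it likewise notes that the direct expansion in matrix entries is a tedious alternative. Nothing is missing.
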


We define
\begin{equation*}
 \dH{A_i}{A_j}=\frac{1}{2}\Log DS(A_i,A_j).
\end{equation*}
So by Lemma~\ref{lem:dH=DS}, given $\V_i,\V_j\in\Xsubgr{P}$ we have that 
$\dH{\varrho(\V_i)}{\varrho(\V_j)}=\dH{\V_i}{\V_j}$.

Let $(A_0,A_1,A_2,A_3)\in\CC[3]{C}^{\bar{h}_P\neq}(\Xsp)$. Then the homomorphism
\begin{align*}
 \widetilde{\sigma}\co\CC[3]{C}^{\bar{h}_P\neq}(\Xspb)&\to\EpB{\C}\\[5pt]
(A_0,A_1,A_2,A_3)&\mapsto(\bar{w}_0,\bar{w}_1,\bar{w}_2),
\end{align*}
is given by assigning to $(A_0,A_1,A_2,A_3)$ the flattening defined by
\begin{align*}
\widetilde{w}_0&=\dH{A_0}{A_3}+\dH{A_1}{A_2}-\dH{A_0}{A_2}-\dH{A_1}{A_3},\\
\widetilde{w}_1&=\dH{A_0}{A_2}+\dH{A_1}{A_3}-\dH{A_0}{A_1}-\dH{A_2}{A_3},\\
\widetilde{w}_2&=\dH{A_0}{A_1}+\dH{A_2}{A_3}-\dH{A_0}{A_3}-\dH{A_1}{A_2},
\end{align*}
which by Lemma~\ref{lem:dH=DS} is the same as the one given in
\eqref{eq:flattening}
and by Proposition~\ref{prop:im.EBg} we obtain the homomorphism
\eqref{eq:tilde.sigma}.

\section{Zickert's relative class revisited}\label{sec:zrc}

In \cite{Zickert:VCSIR} Zickert defines a complex $\bar{C}_*(\bar{G},\bar{P})$
of truncated simplices 
and proves that the complex
$\bar{B}_*(\bar{G},\bar{P})=\bar{C}_*(\bar{G},\bar{P})\otimes_{\Z[G]}\Z$ 
computes the Takasu relative homology groups $H_*(\bar{G},\bar{P};\Z)$
\cite[Corollary~3.8]{Zickert:VCSIR}. This complex is used to define a
homomorphism $\Psi\co H_3(\bar{G},\bar{P};\Z)\to\EBg{\C}$
\cite[Theorem~3.17]{Zickert:VCSIR}. Given an ideal triangulation of an
hyperbolic $3$--manifold $M$ and using a developing map of the geometric
representation to give to each ideal simplex a decoration by horospheres, it is
defined a \emph{relative class} $F(M)$ in the group $H_3(\bar{G},\bar{P};\Z)$
\cite[Corollary~5.6]{Zickert:VCSIR}. This relative class
depends on the choice of decoration, but it is proved that its image under the
homomorphism $\Psi$ is independent of the choice of decoration
\cite[Theorem~6.10]{Zickert:VCSIR}. In fact, in \cite{Zickert:VCSIR} it is
considered the more general situation of a tame $3$--manifold with a boundary
parabolic $\PSL{\C}$--representation, this will be considered in the following
section.

In this section we compare the results in \cite{Zickert:VCSIR} with our
construction of the invariant $\beta_P(M)$. We give an explicit isomorphism
between the complexes $\bar{C}_*(\bar{G},\bar{P})$ and
$\CC{C}^{h_{\bar{P}}^{\bar{B}}\neq}(\Xsubgr{\bar{P}})$; under this isomorphism, the homomorphisms
$\Psi$ and $\widehat{\sigma}$ in \eqref{eq:tilde.sigma} coincide. 

Moreover, we prove that the image of Zickert's class $\lambda_3(F(M))\in H_3([\bar{G}:\bar{P}];\Z)$ under the canonical homomorphism
\eqref{eq:Taka.Hoch} is independent on the choice of decoration and it is precisely the invariant $\beta_P(M)$ in $H_3([\bar{G}:\bar{P}];\Z)$.
 
\begin{remark}
Notice the difference of notation, in \cite{Zickert:VCSIR} $G=\PSL{\C}$ and $P$
corresponds to the subgroup of $\PSL[]{\C}$ given by the image of the group of
upper triangular matrices with $1$ in the diagonal, that is, in our notation to
the subgroup $\bar{U}=\bar{P}$.
\end{remark}

\subsection{The complex of truncated simplices}\label{ssec:cts}

Let $\triangle$ be an $n$--simplex with a vertex ordering given by associating
an integer $i\in\{0,\dots,n\}$ to each vertex. Let $\overline{\triangle}$ denote
the corresponding truncated simplex obtained by chopping off disjoint regular
neighborhoods of the vertices. Each vertex of $\overline{\triangle}$ is
naturally associated with an ordered pair $ij$ of distinct integers. Namely, the
$ij$th vertex of $\overline{\triangle}$ is the vertex near the $i$th vertex of
$\triangle$ and on the edge going to the $j$th vertex of $\triangle$.

Let $\overline{\triangle}$ be a truncated $n$--simplex. A
\emph{$\bar{G}$--vertex labeling} $\{\bar{g}^{ij}\}$ of $\overline{\triangle}$
assigns to the vertex $ij$ of $\overline{\triangle}$ an element
$\bar{g}^{ij}\in\bar{G}$ satisfying the following properties:
\begin{enumerate}[(i)]
 \item For fixed $i$, the labels $\bar{g}^{ij}$ are distinct elements in
$\bar{G}$ mapping to the same left $\bar{P}$--coset.\label{it:coset.labels}
 \item The elements $\bar{g}_{ij}=(\bar{g}^{ij})^{-1}\bar{g}^{ji}$ are
counterdiagonal, that is, of the form $\bigl(\begin{smallmatrix}
  0 & -c^{-1}\\ c & 0                                                           
\end{smallmatrix}\bigr)$.\label{it:counter}
\end{enumerate}
Let $\bar{C}_n(\bar{G},\bar{P})$, $n\geq1$, be the free abelian group generated
by $\bar{G}$--vertex labelings of truncated $n$--simplices. 

\begin{remark}\label{rem:same.ac}
Since $\Xsubgr{\bar{P}}$ is $\bar{G}$--isomorphic to the set of left
$\bar{P}$--cosets, using the homomorphism $h_{\bar{I}}^{\bar{P}}$ given in
\eqref{eq:h.hHK} (see Remark~\ref{rem:h.bhHK}) property \eqref{it:coset.labels}
means that for fixed $i$ we have
\begin{equation*}
h_{\bar{I}}^{\bar{P}}(\bar{g}^{ij})=[a_i,c_i]
\end{equation*}
for some fixed element $[a_i,c_i]\in\Xsubgr{\bar{P}}$ and for all $j\neq i$. By
the definition of $h_{\bar{I}}^{\bar{P}}$ we have that for fixed $i$ all the
$\bar{g}^{ij}$ have the form
\begin{equation*}
\bar{g}^{ij}=\overline{\begin{pmatrix}
          a_i & b_{ij}\\
          c_i & d_{ij}
        \end{pmatrix}},\quad\text{with $j\neq i$ and $b_{ij}\neq b_{ik}$ and
$d_{ij}\neq d_{ik}$ for $j\neq k$.}
\end{equation*}
\end{remark}

Left multiplication endows $\bar{C}_n(\bar{G},\bar{P})$ with a free $G$--module
structure and the usual boundary map on untruncated simplices induces a boundary
map on $\bar{C}_n(\bar{G},\bar{P})$, making it into a chain complex. Define
\begin{equation*}
\bar{B}_*(\bar{G},\bar{P})=\bar{C}_*(\bar{G},\bar{P})\otimes_{\Z[G]}\Z.
\end{equation*}
Let $\{\bar{g}^{ij}\}$ be a $\bar{G}$--vertex labeling of a truncated
$n$--simplex $\overline{\triangle}$. 
We define a \emph{$\bar{G}$--edge labeling} of $\overline{\triangle}$ assigning
to the oriented edge going from vertex $ij$ to vertex $kl$ the labeling
$(\bar{g}^{ij})^{-1}\bar{g}^{kl}$. It is easy to see that for any
$\bar{g}\in\bar{G}$, the $\bar{G}$--vertex labelings of $\overline{\triangle}$
given by $\{\bar{g}^{ij}\}$ and $\{\bar{g}\bar{g}^{ij}\}$ have the same
$\bar{G}$--edge labelings. Hence, a $\bar{G}$--edge labeling represents a
generator of $\bar{B}_*(\bar{G},\bar{P})$. The labeling of an edge going from
vertex $i$ to vertex $j$ in the untruncated simplex is denoted by
$\bar{g}_{ij}$, and the labeling of the edges near the $i$th vertex are denoted
by $\bar{\alpha}^i_{jk}$. These edges are called the \emph{long edges} and the
\emph{short edges} respectively. By properties \eqref{it:coset.labels} and
\eqref{it:counter} in the definition of $\bar{G}$--vertex labelings of a
truncated simplex, the $\bar{\alpha}^i_{jk}$'s are nontrivial elements in
$\bar{P}$ and the $\bar{g}_{ij}$'s are counterdiagonal. Moreover, from the
definition of $\bar{G}$--edge labelings we have that the product of edge
labeling along any two-face (including the triangles) is $\bar{I}$.

In \cite[Corollary~3.8]{Zickert:VCSIR} it is proved that the complex $\bar{B}_*(\bar{G},\bar{P})$ computes the groups $H_*(\bar{G},\bar{P})$.

In what follows we need a more explicit version of
\cite[Lemma~3.5]{Zickert:VCSIR}.

\begin{lemma}[{\cite[Lemma~3.5]{Zickert:VCSIR}}]\label{lem:Zlem3.5}
 Let $\bar{g}_i\bar{P}=\overline{\begin{pmatrix}
           a_i & b_i\\ c_i & d_i
          \end{pmatrix}}\bar{P}$ and $\bar{g}_j\bar{P}=\overline{\begin{pmatrix}
           a_j & b_j\\ c_j & d_j
          \end{pmatrix}}\bar{P}$ be $\bar{P}$--cosets satisfying the condition
$\bar{g}_i\bar{B}\neq\bar{g}_j\bar{B}$. There exists unique coset
representatives $\bar{g}_i\bar{x}_{ij}$ and
 $\bar{g}_j\bar{x}_{ji}$ satisfying the condition that
$(\bar{g}_i\bar{x}_{ij})^{-1}\bar{g}_j\bar{x}_{ji}$ be counterdiagonal given by
 \begin{equation}\label{eq:gx}
\bar{g}_i\bar{x}_{ij}=\overline{\begin{pmatrix}
             a_i & \frac{a_j}{a_ic_j-a_jc_i}\\
             c_i & \frac{c_j}{a_ic_j-a_jc_i} 
            \end{pmatrix}},\qquad
\bar{g}_j\bar{x}_{ji}=\overline{\begin{pmatrix}
             a_j & \frac{a_i}{a_jc_i-a_ic_j}\\
             c_j & \frac{c_i}{a_jc_i-a_ic_j} 
            \end{pmatrix}}.
 \end{equation}
\end{lemma}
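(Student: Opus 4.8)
The plan is to reduce the statement to elementary linear algebra over $\C$, exploiting that $\bar{P}=\bar{U}$, so that passing to another representative of a left $\bar{P}$--coset only alters the second column of a matrix while leaving the first column (up to sign) fixed. First I would translate the hypothesis: under the identification $\Xsubgr{\bar{B}}=\widehat{\C}$ from Proposition~\ref{prop:S2}, the coset $\bar{g}_i\bar{B}$ corresponds to $a_i/c_i$, so the condition $\bar{g}_i\bar{B}\neq\bar{g}_j\bar{B}$ is exactly $a_ic_j-a_jc_i\neq 0$, i.e. the columns $(a_i,c_i)^{T}$ and $(a_j,c_j)^{T}$ are linearly independent. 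This nonvanishing is what will make everything invertible.

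Next I would parametrize the coset representatives. Since $\bar{P}=\bar{U}$, every element of $\bar{g}_i\bar{P}$ has the form $\overline{\begin{pmatrix}a_i & p\\ c_i & q\end{pmatrix}}$ with $a_iq-c_ip=1$, where concretely $p=a_it+b_i$ and $q=c_it+d_i$ for $t\in\C$; likewise write $\bar{g}_j\bar{x}_{ji}=\overline{\begin{pmatrix}a_j & r\\ c_j & s\end{pmatrix}}$ with $r=a_ju+b_j$, $s=c_ju+d_j$. A direct multiplication gives
\begin{equation*}
(\bar{g}_i\bar{x}_{ij})^{-1}\bar{g}_j\bar{x}_{ji}
=\overline{\begin{pmatrix} qa_j-pc_j & qr-ps\\ a_ic_j-a_jc_i & a_is-c_ir\end{pmatrix}}.
\end{equation*}
The crucial observation is that the top-left entry $qa_j-pc_j$ depends only on $t$ and the bottom-right entry $a_is-c_ir$ depends only on $u$. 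Requiring both diagonal entries to vanish therefore yields two \emph{decoupled} linear equations, each with leading coefficient $a_ic_j-a_jc_i\neq 0$, hence each uniquely solvable. This settles existence and uniqueness of the required representatives simultaneously.

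Finally I would substitute the unique solutions back and simplify using $\det\bigl(\begin{smallmatrix}a_i & b_i\\ c_i & d_i\end{smallmatrix}\bigr)=a_id_i-b_ic_i=1$ (and the analogue for the $j$th matrix); this collapses the expressions to $p=a_j/(a_ic_j-a_jc_i)$, $q=c_j/(a_ic_j-a_jc_i)$ together with the symmetric formulas for $r,s$, which is exactly \eqref{eq:gx}. It then remains to identify the shape of the product: the diagonal entries are zero by construction, the lower-left entry equals $a_ic_j-a_jc_i$, and because a product of two $SL_2(\C)$--matrices has determinant $1$, the upper-right entry is forced to be $-(a_ic_j-a_jc_i)^{-1}$, giving precisely $\bigl(\begin{smallmatrix}0 & -c^{-1}\\ c & 0\end{smallmatrix}\bigr)$ with $c=a_ic_j-a_jc_i$.

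All of the computations above are routine; the only point requiring genuine care is the passage between $SL_2(\C)$ and $\PSL{\C}$. I would check that the $\pm I$ ambiguity in lifting an element of $\bar{G}$ affects none of the assertions: being counterdiagonal and the very notion of a distinguished coset representative are both invariant under $g\mapsto -g$, so the clean $SL_2(\C)$ computation descends unambiguously to $\bar{G}$. This bookkeeping about signs is the main thing to get right, but it is a matter of verification rather than a real difficulty.
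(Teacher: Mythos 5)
Your proof is correct and takes essentially the same route as the paper's: both parametrize the coset representatives by upper-unipotent factors, observe that counterdiagonality of $(\bar{g}_i\bar{x}_{ij})^{-1}\bar{g}_j\bar{x}_{ji}$ splits into two decoupled linear equations whose leading coefficient $\pm(a_ic_j-a_jc_i)$ is nonzero by the hypothesis $\bar{g}_i\bar{B}\neq\bar{g}_j\bar{B}$, and then simplify with the determinant-one relations to reach \eqref{eq:gx}. The only difference is organizational --- the paper routes the computation through $\bar{g}_i^{-1}\bar{g}_j$ with abstract entries $a,b,c,d$ before substituting, and it records exactly the $\pm I$ sign check you flag at the end.
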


\begin{proof}
We start by reproducing the proof of \cite[Lemma~3.5]{Zickert:VCSIR} since it
saves computations. Let
$\bar{g}_i^{-1}\bar{g}_j=\overline{\bigl(\begin{smallmatrix}
              a & b\\ c & d
             \end{smallmatrix}\bigr)}$, and let
$\bar{x}_{ij}=\bigl(\overline{\begin{smallmatrix}
                                                   1 & p_{ij}\\ 0 & 1
                                                  \end{smallmatrix}}\bigr)$ and
$\bar{x}_{ji}=\bigl(\overline{\begin{smallmatrix}
1 & p_{ji}\\ 0 & 1
\end{smallmatrix}}\bigr)$.
We have
\begin{equation*}
 \bar{x}_{ij}^{-1}\bar{g}_i^{-1}\bar{g}_j\bar{x}_{ji}=\overline{\begin{pmatrix}
                 a-cp_{ij} & ap_{ji}+b-p_{ij}(cp_{ji}+d)\\ c & cp_{ji}+d
                \end{pmatrix}}.
\end{equation*}
Since $\bar{g}_i\bar{B}\neq\bar{g}_j\bar{B}$, it follows that $c$ is nonzero.
This implies that there exists unique complex numbers $p_{ij}$ and $p_{ji}$ such
that the above matrix is conterdiagonal. They are given by
\begin{equation}\label{eq:xij.xji}
p_{ij}=\frac{a}{c},\qquad p_{ji}=-\frac{d}{c}.
\end{equation}
Now, using the explicit expressions for $\bar{g}_i$ and  $\bar{g}_j$ we have
\begin{equation}\label{eq:gigj}
 \bar{g}_i^{-1}\bar{g}_j=\overline{\begin{pmatrix}
              a_jd_i-b_ic_j & d_ib_j-b_id_j\\
              a_ic_j-a_jc_i & a_id_j-c_ib_j
             \end{pmatrix}}=\overline{\begin{pmatrix}
                             a & b\\ c & d
                            \end{pmatrix}}.
\end{equation}
Substituting in \eqref{eq:xij.xji} we get
\begin{equation*}
 p_{ij}=\frac{a_jd_i-b_ic_j}{a_ic_j-a_jc_i},\qquad
p_{ji}=\frac{a_id_j-b_ic_j}{a_ic_j-a_jc_i}.
\end{equation*}
Hence we have
\begin{align*}
\bar{g}_i\bar{x}_{ij}&=\overline{\begin{pmatrix}
           a_i & b_i\\ c_i & d_i
          \end{pmatrix}}\overline{\begin{pmatrix}
                        1 & \frac{a_jd_i-b_ic_j}{a_ic_j-a_jc_i}\\ 0 & 1
                       \end{pmatrix}}=\overline{\begin{pmatrix}
             a_i & \frac{a_j}{a_ic_j-a_jc_i}\\
             c_i & \frac{c_j}{a_ic_j-a_jc_i} 
            \end{pmatrix}},\\
\bar{g}_j\bar{x}_{ji}&=\overline{\begin{pmatrix}
           a_j & b_j\\ c_j & d_j
          \end{pmatrix}}\overline{\begin{pmatrix}
                        1 & \frac{a_id_j-b_jc_i}{a_jc_i-a_ic_j}\\ 0 & 1
                       \end{pmatrix}}=\overline{\begin{pmatrix}
             a_j & \frac{a_i}{a_jc_i-a_ic_j}\\
             c_j & \frac{c_i}{a_jc_i-a_ic_j} 
            \end{pmatrix}}.
\end{align*}
Notice that $\bar{g}_i\bar{x}_{ij}$ is a well defined element in $\bar{G}$, if
we change the signs of $a_i$ and $c_i$ the whole matrix changes sign, while if
we change the signs of $a_j$ and $c_j$ the matrix does not change. Analogously
for $\bar{g}_j\bar{x}_{ji}$.
\end{proof}

\begin{remark}\label{rem:hBP.neq}
Notice that the expressions for $\bar{g}_i\bar{x}_{ij}$ and
$\bar{g}_j\bar{x}_{ji}$ given in \eqref{eq:gx} only depend on the classes
$[a_i,c_i]$ and $[a_j,c_j]$ in $\Xsubgr{\bar{P}}$, so indeed they only depend on
the left $\bar{P}$--cosets $\bar{g}_i\bar{P}$ and $\bar{g}_j\bar{P}$. Also
notice that by \eqref{eq:gigj} the condition
$\bar{g}_i\bar{B}\neq\bar{g}_j\bar{B}$ is equivalent to $a_ic_j-a_jc_i\neq0$
which is equivalent to
$h_{\bar{P}}^{\bar{B}}(\bar{g}_i\bar{P})=\frac{a_i}{c_i}\neq
\frac{a_j}{c_j}=h_{\bar{P}}^{\bar{B}}(\bar{g}_j\bar{P})$.
\end{remark}

\begin{corollary}\label{cor:gens}
Let $\overline{\triangle}$ be a truncated $n$--simplex. A generator of
$\bar{C}_n(\bar{G},\bar{P})$, \ie a $\bar{G}$--vertex labeling $\{\bar{g}^{ij}\}$
of $\overline{\triangle}$ has the form
\begin{equation*}
 \bar{g}^{ij}=\overline{\begin{pmatrix}
             a_i & \frac{a_j}{a_ic_j-a_jc_i}\\
             c_i & \frac{c_j}{a_ic_j-a_jc_i} 
            \end{pmatrix}},\quad i,j\in\{1,\dots,n\},\quad j\neq i,\quad
a_ic_j-a_jc_i\neq0,
\end{equation*}
and the class $[a_i,c_i]\in\Xsubgr{\bar{P}}$ corresponds to the left
$\bar{P}$--coset associated to the $i$--th vertex of $\triangle$. Hence, a
generator of $\bar{B}_n(\bar{G},\bar{P})$, \ie a $\bar{G}$--edge labeling of
$\overline{\triangle}$ has the form
\begin{align}
\bar{\alpha}^i_{jk}&=\overline{\begin{pmatrix}
  1 & \frac{a_kc_j-a_jc_k}{(a_ic_j-a_jc_i)(a_ic_k-a_kc_i)}\\
  0 & 1
 \end{pmatrix}}, \quad i,j,k\in\{1,\dots,n\},\quad i\neq j,k,\quad j\neq
k,\label{eq:el.alpha}\\
\bar{g}_{ij}&=\overline{\begin{pmatrix}
                           0 & -\frac{1}{a_ic_j-a_jc_i}\\ a_ic_j-a_jc_i & 0
                          \end{pmatrix}},\quad  i,j\in\{1,\dots,n\},\quad i\neq
j.\label{eq:el.gij}
\end{align}
\end{corollary}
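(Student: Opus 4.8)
The plan is to read the asserted normal form straight off the two defining properties of a $\bar{G}$--vertex labeling, using the explicit version of Lemma~\ref{lem:Zlem3.5} established above, and then to obtain the edge labelings by direct matrix multiplication.

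First I would invoke property~\ref{it:coset.labels} together with Remark~\ref{rem:same.ac}: the $i$--th vertex of $\triangle$ determines a single left $\bar{P}$--coset $\bar{g}_i\bar{P}$, corresponding to a class $[a_i,c_i]\in\Xsubgr{\bar{P}}$, and for every $j\neq i$ the label $\bar{g}^{ij}$ is a representative of this fixed coset. The crucial observation is that property~\ref{it:counter} supplies the nondegeneracy needed to apply the lemma: since $\bar{g}_{ij}=(\bar{g}^{ij})^{-1}\bar{g}^{ji}$ is counterdiagonal it cannot lie in $\bar{B}$ (upper triangular matrices), so $\bar{g}^{ij}$ and $\bar{g}^{ji}$ lie in distinct $\bar{B}$--cosets, i.e.\ $\bar{g}_i\bar{B}\neq\bar{g}_j\bar{B}$. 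By Remark~\ref{rem:hBP.neq} this is exactly the condition $a_ic_j-a_jc_i\neq0$.

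With these facts in hand I would apply Lemma~\ref{lem:Zlem3.5} to each pair $\{i,j\}$: the pair of representatives of $\bar{g}_i\bar{P}$ and $\bar{g}_j\bar{P}$ whose quotient is counterdiagonal is \emph{unique} and is given explicitly by \eqref{eq:gx}. Hence $\bar{g}^{ij}$ must coincide with $\bar{g}_i\bar{x}_{ij}$, which is precisely the claimed form; this proves the description of generators of $\bar{C}_n(\bar{G},\bar{P})$. For the edge labelings the remaining work is purely computational: a short edge near the $i$--th vertex is $\bar{\alpha}^i_{jk}=(\bar{g}^{ij})^{-1}\bar{g}^{ik}$ and a long edge is $\bar{g}_{ij}=(\bar{g}^{ij})^{-1}\bar{g}^{ji}$, so substituting the matrices \eqref{eq:gx} and multiplying (inverting via $\det\bar{g}^{ij}=1$) yields \eqref{eq:el.alpha} and \eqref{eq:el.gij}. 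The diagonal entries of the short--edge product collapse to $1$ and its lower--left entry to $0$ because the two factors share the first column $(a_i,c_i)$, while the long--edge product is counterdiagonal by construction; I would display only the off--diagonal entries, since these carry all the content.

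I do not anticipate a genuine obstacle: once property~\ref{it:counter} is recognized as providing the hypothesis $a_ic_j-a_jc_i\neq0$ of Lemma~\ref{lem:Zlem3.5}, the vertex labels are forced and the edge labels are bookkeeping. The only subtlety is the sign ambiguity of representatives in $\bar{G}=\PSL{\C}$ (negating $a_i,c_i$ negates the whole matrix in \eqref{eq:gx} and hence does not change its class), but this is already addressed at the end of the proof of Lemma~\ref{lem:Zlem3.5} and need only be cited.
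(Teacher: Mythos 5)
Your proposal is correct and takes essentially the same route as the paper, whose proof is the one-line citation of Remark~\ref{rem:same.ac}, Lemma~\ref{lem:Zlem3.5} and Remark~\ref{rem:hBP.neq} --- precisely the three ingredients you assemble, with the observation that counterdiagonality of $\bar{g}_{ij}$ forces $\bar{g}_i\bar{B}\neq\bar{g}_j\bar{B}$ and the edge-label matrix multiplications (which I checked: determinant $1$, shared first column killing the off-terms, and $D_{ji}=-D_{ij}$ producing the $-1/(a_ic_j-a_jc_i)$ entry) made explicit. Your handling of the sign ambiguity in $\bar{G}=\PSL{\C}$ by citing the end of the proof of Lemma~\ref{lem:Zlem3.5} is also exactly what the paper relies on.
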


\begin{proof}
Follows immediately from Remark~\ref{rem:same.ac}, Lemma~\ref{lem:Zlem3.5} and
Remark~\ref{rem:hBP.neq}.
\end{proof}

\begin{corollary}\label{cor:iso.complex}
 There is a $\bar{G}$--isomorphism of chain complexes
\begin{align*}
\CC[n]{C}^{h_{\bar{P}}^{\bar{B}}\neq}(\Xsubgr{\bar{P}})&\leftrightarrow
\bar{C}_n(\bar{G},\bar{P})\\
\bigl([a_0,c_0],\dots,[a_n,c_n]\bigr)&\leftrightarrow \left\{\bar{g}^{ij}=\overline{\begin{pmatrix}
             a_i & \frac{a_j}{a_ic_j-a_jc_i}\\
             c_i & \frac{c_j}{a_ic_j-a_jc_i} 
            \end{pmatrix}}\right\},\ i,j\in\{1,\dots,n\},\quad i\neq j.
\end{align*}
Hence, there is an isomorphism of chain complexes
$\CC[n]{B}^{h_{\bar{P}}^{\bar{B}}\neq}(\Xsubgr{\bar{P}})\cong
\bar{B}_n(\bar{G},\bar{P})$ where the $\bar{G}$--orbit
$\bigl([a_0,c_0],\dots,[a_n,c_n]\bigr)_{\bar{G}}$ corresponds to the
$\bar{G}$--edge labeling given by \eqref{eq:el.alpha} and \eqref{eq:el.gij}.
\end{corollary}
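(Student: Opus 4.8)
The plan is to verify that the proposed assignment is a well-defined bijection and that it commutes with the boundary operators. The main point to establish is that the map $\bigl([a_0,c_0],\dots,[a_n,c_n]\bigr)\mapsto\{\bar{g}^{ij}\}$ is well-defined on both sides: given a tuple in $\CC[n]{C}^{h_{\bar{P}}^{\bar{B}}\neq}(\Xsubgr{\bar{P}})$, the condition $h_{\bar{P}}^{\bar{B}}(\bar{g}^{ij})\neq h_{\bar{P}}^{\bar{B}}(\bar{g}^{ik})$ for $j\neq k$ guarantees, by Remark~\ref{rem:hBP.neq}, that $a_ic_j-a_jc_i\neq0$, so that the formulas defining $\bar{g}^{ij}$ make sense. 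First I would check that $\{\bar{g}^{ij}\}$ is indeed a $\bar{G}$--vertex labeling, that is, that it satisfies properties~\eqref{it:coset.labels} and \eqref{it:counter} in the definition. Property~\eqref{it:coset.labels} holds because for fixed $i$ all the $\bar{g}^{ij}$ have first column $(a_i,c_i)^T$ and hence map to the same left $\bar{P}$--coset $[a_i,c_i]$, and they are distinct since the $a_ic_j-a_jc_i$ are distinct for distinct $j$ (as the tuple lies in the $h_{\bar{P}}^{\bar{B}}\neq$--subcomplex); property~\eqref{it:counter} is exactly the content of Lemma~\ref{lem:Zlem3.5}, since by construction $\bar{g}^{ij}=\bar{g}_i\bar{x}_{ij}$ in the notation of that lemma.

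Next I would check well-definedness at the level of classes. By Remark~\ref{rem:hBP.neq} the expression \eqref{eq:gx} depends only on the classes $[a_i,c_i]$ and $[a_j,c_j]$, and the final remark in the proof of Lemma~\ref{lem:Zlem3.5} shows that $\bar{g}_i\bar{x}_{ij}$ is unchanged when $(a_i,c_i)$ is replaced by $(-a_i,-c_i)$ and when $(a_j,c_j)$ is replaced by $(-a_j,-c_j)$; thus the labeling depends only on the classes in $\Xsubgr{\bar{P}}$, so the forward map descends to the quotient by the $\Z_2$--action. For injectivity and surjectivity, Corollary~\ref{cor:gens} already asserts that every generator of $\bar{C}_n(\bar{G},\bar{P})$ has precisely the displayed form, with $[a_i,c_i]$ the coset at the $i$th vertex; this gives the inverse assignment $\{\bar{g}^{ij}\}\mapsto\bigl([a_0,c_0],\dots,[a_n,c_n]\bigr)$ and shows the two maps are mutually inverse. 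Equivariance under $\bar{G}$ follows by direct inspection: left multiplication by $\bar{g}$ on the labeling corresponds, via Lemma~\ref{lem:det.inv} and the transitivity of the action on $\Xsubgr{\bar{P}}$, to the diagonal action on the tuple $\bigl([a_0,c_0],\dots,[a_n,c_n]\bigr)$.

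I would then verify compatibility with the face (hence boundary) operators. On $\CC[n]{C}^{h_{\bar{P}}^{\bar{B}}\neq}(\Xsubgr{\bar{P}})$ the face map $d_i$ deletes the entry $[a_i,c_i]$; on $\bar{C}_n(\bar{G},\bar{P})$ the induced boundary deletes the $i$th vertex of the truncated simplex, and crucially the formula \eqref{eq:gx} for $\bar{g}^{jk}$ with $j,k\neq i$ does not involve the index $i$, so the surviving labels match the labels of the tuple with $[a_i,c_i]$ removed. Since both boundary operators carry the same signs $(-1)^i$, the assignment is a chain map. Tensoring with $\Z$ over $\Z[G]$ then yields the isomorphism $\CC[n]{B}^{h_{\bar{P}}^{\bar{B}}\neq}(\Xsubgr{\bar{P}})\cong\bar{B}_n(\bar{G},\bar{P})$, under which a $\bar{G}$--orbit $\bigl([a_0,c_0],\dots,[a_n,c_n]\bigr)_{\bar{G}}$ corresponds to the $\bar{G}$--edge labeling, and by Corollary~\ref{cor:gens} the short and long edge labels are precisely \eqref{eq:el.alpha} and \eqref{eq:el.gij}.

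The step I expect to require the most care is the bookkeeping in the edge labeling: the chain isomorphism is naturally phrased on $\bar{C}_*$ via vertex labels, but the quotient $\bar{B}_*$ is generated by \emph{edge} labelings, so I must confirm that passing to edge labels $(\bar{g}^{ij})^{-1}\bar{g}^{kl}$ is exactly what killing the left $\bar{G}$--action does, and that the resulting short edges $\bar{\alpha}^i_{jk}$ and long edges $\bar{g}_{ij}$ computed from \eqref{eq:gx} agree with the formulas \eqref{eq:el.alpha} and \eqref{eq:el.gij} claimed in Corollary~\ref{cor:gens}. This is essentially a determinant computation using \eqref{eq:gigj}, routine but the one place where a sign or an inversion could go astray; everything else follows formally from the cited lemmas and remarks.
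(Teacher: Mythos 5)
Your proposal is correct and follows essentially the same route as the paper, whose proof is just a three-line compression of what you wrote: it cites Corollary~\ref{cor:gens} for the form of the generators (hence bijectivity), asserts $\bar{G}$--equivariance by direct computation, and leaves boundary compatibility as an easy exercise — precisely the checks you carry out, with the same ingredients (Lemma~\ref{lem:Zlem3.5}, Remark~\ref{rem:hBP.neq}, Lemma~\ref{lem:det.inv}). One micro-repair: for fixed $i$ the labels $\bar{g}^{ij}$ are distinct not because the determinants $a_ic_j-a_jc_i$ are distinct for distinct $j$ (they need not be — e.g. $[1,0],[0,1],[1,1]$ give equal determinants), but because the $h_{\bar{P}}^{\bar{B}}\neq$ condition makes $(a_j,c_j)$ and $(a_k,c_k)$ non-proportional for $j\neq k$, so the second columns of $\bar{g}^{ij}$ and $\bar{g}^{ik}$ cannot coincide, and a sign discrepancy is ruled out by the shared first column; your parenthetical appeal to the subcomplex condition is the right reason, the clause preceding it is not.
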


\begin{proof}
This is a refined version of \cite[Corollary~3.6]{Zickert:VCSIR} and follows
from Corollary~\ref{cor:gens}. By direct computation it is easy to see that the
isomorphism is $\bar{G}$--equivariant. The only thing that remains to prove is
that the isomorphism commutes with the boundary maps of the complexes, which is
an easy exercise.
\end{proof}

\begin{remark}\label{rem:efficient}
From Corollary~\ref{cor:iso.complex}, to represent a generator of
$\CC[n]{C}^{h_{\bar{P}}^{\bar{B}}\neq}(\Xsubgr{\bar{P}})$ we just need $2(n+1)$
complex numbers, while to represent a generator of $\bar{C}_n(\bar{G},\bar{P})$
we need $4(n+1)n$ because there is a lot of redundant information in
$\bar{g}^{ij}$, the entries $b_{ij}$ and $d_{ij}$ in $g^{ij}$, see
Remarks~\ref{rem:same.ac} and \ref{rem:hBP.neq}. So it is more efficient to use
the complex
$\CC{C}^{h_{\bar{P}}^{\bar{B}}\neq}(\Xsubgr{\bar{P}})$ than the complex
$\bar{C}_*(\bar{G},\bar{P})$ to compute $H_*(\bar{G},\bar{P};\Z)$.
\end{remark}

\begin{remark}
If we denote by $\V_i=[v_i^1,v_i^2]$ an element in $\Xsubgr{\bar{P}}$ as in
Subsection~\ref{ssec:desc}, we have that the isomorphism of
Corollary~\ref{cor:iso.complex} is written as
\begin{equation*}
 (\V_0,\dots,\V_n) \leftrightarrow \left\{\bar{g}^{ij}=\overline{\begin{pmatrix}
             v_i^1 & \frac{v_j^1}{\det(\V_i,\V_j)}\\[5pt]
             v_i^2 & \frac{v_j^2}{\det(\V_i,\V_j)} 
            \end{pmatrix}}\right\},\ i,j\in\{1,\dots,n\},\quad i\neq j,
\end{equation*}
where $(\V_0,\dots,\V_n)$ is an $(n+1)$--tuple of elements of $\Xsubgr{\bar{P}}$
such that $\det(\V_i,\V_j)^2\neq0$ for $i\neq j$, see Remark~\ref{rem:hBP.neq}.
We also have that in this notation the $\bar{G}$--edge labeling given by
\eqref{eq:el.alpha} and \eqref{eq:el.gij} is written as
\begin{align}
\bar{\alpha}^i_{jk}&=\overline{\begin{pmatrix}
  1 & \frac{\det(\V_k,\V_j)}{\det(\V_i,\V_j)\det(\V_i,\V_k)}\\
  0 & 1
 \end{pmatrix}}, \quad i,j,k\in\{1,\dots,n\},\quad i\neq j,k,\quad j\neq
k,\notag\\
\bar{g}_{ij}&=\overline{\begin{pmatrix}
                           0 & -\frac{1}{\det(\V_i,\V_j)}\\ \det(\V_i,\V_j) & 0
                          \end{pmatrix}},\quad  i,j\in\{1,\dots,n\},\quad i\neq
j.\label{eq:el.gij.vi}
\end{align}
Notice that although $\det(\V_i,\V_j)$ is only well-defined up to sign, see
Remark~\ref{rem:det2def}, we get
well-defined elements in $\bar{G}$. The fact that the matrices
\eqref{eq:el.alpha} and \eqref{eq:el.gij} of the $\bar{G}$--edge labeling are
constant under the action of $\bar{G}$ is because $\det(\V_i,\V_j)$ is invariant
(up to sign) under the action of $\bar{G}$, see Lemma~\ref{lem:det.inv}.
\end{remark}

\subsection{Decorated ideal simplices and flattenings}

Also in \cite{Zickert:VCSIR} it is proved that there is a one-to-one
correspondence between generators of $\bar{B}_3(\bar{G},\bar{P})$ and congruence
classes of decorated ideal simplices.

Remember that the subgroup $\bar{P}$ fixes $\infty\in\hypc$ and acts by
translations on any horosphere at $\infty$. A horosphere at $\infty$ is endowed
with the counterclockwise orientation as viewed from $\infty$. Since $\bar{G}$
acts transitively on horospheres, we get an orientation on all horospheres.

A horosphere together with a choice of orientation-preserving isometry to $\C$
is called an \emph{Euclidean horosphere} \cite[Definition~3.9]{Zickert:VCSIR}.
Two horospheres based at the same point are considered equal if the isometries
differ by a translation. Denote by $H(\infty)$ the horosphere at $\infty$ at
height $1$ over the bounding complex plane $\C$, with the Euclidean structure
induced by projection. 
We let $\bar{G}$ act on Euclidean horospheres in the obvious way, this action is
transitive and the isotropy subgroup of $H(\infty)$ is $\bar{P}$. Hence the set
of Euclidean horospheres can be identified with the set $\bar{G}/\bar{P}$ of
left $\bar{P}$--cosets, which is $\bar{G}$--isomorphic to $\Xsubgr{\bar{P}}$,
where an explicit $\bar{G}$--isomorphism is given by
\begin{equation}\label{eq:horo.XP}
\begin{aligned}
 \{\text{Euclidean horospheres}\}&\leftrightarrow \Xsubgr{\bar{P}}\\
   H(\infty) &\leftrightarrow [1,0],
\end{aligned}
\end{equation}
and extending equivariantly using the action of $\bar{G}$.

A choice of Euclidean horosphere at each vertex of an ideal simplex  is called a
\emph{decoration} of the simplex. Having fixed a decoration, we say that the
ideal simplex is \emph{decorated}. Two decorated ideal simplices are called
\emph{congruent} if they differ by an element of $\bar{G}$.

Using the identification of Euclidean horospheres with left $\bar{P}$--cosets,
we can see a decorated ideal simplex as an ideal simplex with a choice of a left
$\bar{P}$--coset for each vertex of the ideal simplex.

\begin{proposition}[{\cite[Theorem~3.13]{Zickert:VCSIR}}]\label{prop:gen.dec}
Generators in $\CC[3]{C}^{h_{\bar{P}}^{\bar{B}}\neq}(\Xsubgr{\bar{P}})$ are in
one-to-one correspondence with decorated simplices. Thus, generators of
$\CC[3]{B}^{h_{\bar{P}}^{\bar{B}}\neq}(\Xsubgr{\bar{P}})$ are in one-to-one
correspondence with congruence classes of decorated simplices.
\end{proposition}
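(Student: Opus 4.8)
I want to establish a bijection between generators of $\CC[3]{C}^{h_{\bar{P}}^{\bar{B}}\neq}(\Xsubgr{\bar{P}})$ and decorated ideal simplices, and then pass to the quotient by $\bar{G}$ to get the statement about congruence classes. The key is that a generator of $\CC[3]{C}^{h_{\bar{P}}^{\bar{B}}\neq}(\Xsubgr{\bar{P}})$ is, by definition, an ordered $4$--tuple $(\V_0,\V_1,\V_2,\V_3)$ of elements of $\Xsubgr{\bar{P}}$ whose images under $h_{\bar{P}}^{\bar{B}}$ are pairwise distinct, i.e. $\det(\V_i,\V_j)^2\neq0$ for all $i\neq j$. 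On the other hand, a decorated ideal simplex is an ordered $4$--tuple of distinct points in $\Xsubgr{\bar{B}}=\widehat{\C}$ (the vertices of the ideal simplex) together with a choice of Euclidean horosphere at each vertex.

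\begin{proof}
Recall from \eqref{eq:horo.XP} that the set of Euclidean horospheres is $\bar{G}$--isomorphic to $\Xsubgr{\bar{P}}$, sending $H(\infty)$ to $[1,0]$. Under this identification, a horosphere based at a vertex $z\in\widehat{\C}$ corresponds precisely to an element $\V\in\Xsubgr{\bar{P}}$ lying over $z$, that is, to $\V\in(h_{\bar{P}}^{\bar{B}})^{-1}(z)$. Indeed, by Remark~\ref{rem:im.inv} the isotropy subgroup $P_z$ fixes pointwise the fibre $(h_{\bar{P}}^{\bar{B}})^{-1}(z)$, while $T_z$ acts transitively on it; this matches exactly the fact that the horospheres based at $z$ form a torsor under the action multiplying the horosphere parameter by a scalar. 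Thus specifying a vertex $z$ of the ideal simplex together with a decorating horosphere at $z$ is the same datum as specifying a single element $\V\in\Xsubgr{\bar{P}}$, whose image $h_{\bar{P}}^{\bar{B}}(\V)=z$ recovers the vertex. Consequently an ordered $4$--tuple $(\V_0,\V_1,\V_2,\V_3)$ of elements of $\Xsubgr{\bar{P}}$ is the same datum as an ordered ideal simplex with vertices $z_i=h_{\bar{P}}^{\bar{B}}(\V_i)$ together with a decoration by the horospheres corresponding to the $\V_i$.

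It remains to check that the nondegeneracy conditions match. A tuple $(\V_0,\V_1,\V_2,\V_3)$ generates $\CC[3]{C}^{h_{\bar{P}}^{\bar{B}}\neq}(\Xsubgr{\bar{P}})$ if and only if the $h_{\bar{P}}^{\bar{B}}(\V_i)$ are pairwise distinct; by Remark~\ref{rem:hBP.neq} this is equivalent to $\det(\V_i,\V_j)^2\neq0$ for all $i\neq j$. But $h_{\bar{P}}^{\bar{B}}(\V_i)\neq h_{\bar{P}}^{\bar{B}}(\V_j)$ for $i\neq j$ is precisely the condition that the four vertices $z_0,z_1,z_2,z_3$ of the ideal simplex be distinct, which is exactly what is required for the $4$--tuple to define a genuine (decorated) ideal simplex. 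Hence the assignment
\begin{equation*}
(\V_0,\V_1,\V_2,\V_3)\mapsto\bigl(\text{ideal simplex }(z_0,z_1,z_2,z_3)\text{ decorated by the horospheres }\V_i\bigr)
\end{equation*}
is a well-defined bijection between generators of $\CC[3]{C}^{h_{\bar{P}}^{\bar{B}}\neq}(\Xsubgr{\bar{P}})$ and decorated ideal simplices.

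Finally, both constructions are $\bar{G}$--equivariant: the action of $\bar{G}$ on $\Xsubgr{\bar{P}}$ is, under \eqref{eq:horo.XP}, the natural action of $\bar{G}$ on Euclidean horospheres, and this induces the diagonal action on tuples, which on the geometric side is exactly the action of $\bar{G}$ on decorated ideal simplices. By definition two decorated simplices are congruent precisely when they differ by an element of $\bar{G}$, and generators of $\CC[3]{B}^{h_{\bar{P}}^{\bar{B}}\neq}(\Xsubgr{\bar{P}})=\CC[3]{C}^{h_{\bar{P}}^{\bar{B}}\neq}(\Xsubgr{\bar{P}})\otimes_{\Z[G]}\Z$ are exactly the $\bar{G}$--orbits of the generators of $\CC[3]{C}^{h_{\bar{P}}^{\bar{B}}\neq}(\Xsubgr{\bar{P}})$. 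Passing to $\bar{G}$--orbits on both sides therefore yields the desired one-to-one correspondence between generators of $\CC[3]{B}^{h_{\bar{P}}^{\bar{B}}\neq}(\Xsubgr{\bar{P}})$ and congruence classes of decorated ideal simplices.
\end{proof}

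The main obstacle, and the only place requiring genuine verification, is the identification in the first paragraph that a decorating horosphere at a vertex $z$ corresponds to an element of the fibre $(h_{\bar{P}}^{\bar{B}})^{-1}(z)$ in a $\bar{G}$--equivariant way; everything else is bookkeeping about nondegeneracy and orbits. This identification is exactly the content of \eqref{eq:horo.XP} together with the transitivity analysis of Remark~\ref{rem:im.inv}, so the proof reduces to assembling these earlier facts rather than to any new computation.
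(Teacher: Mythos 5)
Your proposal is correct and follows essentially the same route as the paper: both use the identification \eqref{eq:horo.XP} of Euclidean horospheres with elements of $\Xsubgr{\bar{P}}$ and recover the ideal simplex by applying $h_{\bar{P}}^{\bar{B}}$ to get four distinct vertices in $\Xsubgr{\bar{B}}=\widehat{\C}$. You merely spell out details the paper leaves implicit, namely that the horosphere corresponding to $\V_i$ is based at $h_{\bar{P}}^{\bar{B}}(\V_i)$ (via Remark~\ref{rem:im.inv}) and that $\bar{G}$--equivariance yields the orbit statement for $\CC[3]{B}^{h_{\bar{P}}^{\bar{B}}\neq}(\Xsubgr{\bar{P}})$.
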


\begin{proof}
Consider the homomorphism $(h_{\bar{P}}^{\bar{B}})_*\co
\CC[3]{C}^{h_{\bar{P}}^{\bar{B}}\neq}(\Xsubgr{\bar{P}})\to
\CC[3]{C}^{\neq}(\Xsubgr{\bar{B}})$ and consider a generator
$(\V_0,\dots,\V_3)$ of
$\CC[3]{C}^{h_{\bar{P}}^{\bar{B}}\neq}(\Xsubgr{\bar{P}})$. Its image
$\bigl((h_{\bar{P}}^{\bar{B}})_*(\V_0),\dots,(h_{\bar{P}}^{\bar{B}}
)_*(\V_3)\bigr)$  is a $4$--tuple of distinct points in $\Xsubgr{\bar{B}}$, so
it determines a unique ideal simplex in $\overline{\hyp}$.
Moreover, $\V_i$ represents a left $\bar{P}$--coset which corresponds to the
vertex $(h_{\bar{P}}^{\bar{B}})_*(\V_i)$ of such ideal simplex. Hence
$(\V_0,\dots,\V_3)$ represents a decorated simplex.
\end{proof}

This together with the isomorphism given in Corollary~\ref{cor:iso.complex}
proves \cite[Theorem~3.13]{Zickert:VCSIR} that there is a one-to-one
correspondence between generators of $\bar{B}_3(\bar{G},\bar{P})$ and congruence
classes of decorated ideal simplices, see \cite[Remark~3.14]{Zickert:VCSIR}.

For a matrix $g=\bigl(\begin{smallmatrix}
                 a & b\\ c & d  
                  \end{smallmatrix}\bigr)$, let $c(g)$ denote the entry $c$. Let
$\alpha$ be a generator of
$\bar{B}(\bar{G},\bar{P})$. By \eqref{eq:el.gij.vi} we have
that $c(\bar{g}_{ij})=\pm\det(\V_i,\V_j)$, that is, it is only well-defined up
to sign. But we have that
\begin{equation}\label{eq:c2=det2}
 c(\bar{g}_{ij})^2=\det(\V_i,\V_j)^2
\end{equation}
is a well-defined non-zero complex number. Squaring formulas \eqref{eq:z},
\eqref{eq:1/1-z} and \eqref{eq:1-z/z} and using \eqref{eq:c2=det2} we get
\begin{equation*}
\frac{c(\bar{g}_{03})^2c(\bar{g}_{12})^2}{c(\bar{g}_{02})^2c(\bar{g}_{13})^2}
=z^2,\quad
\frac{c(\bar{g}_{13})^2c(\bar{g}_{02})^2}{c(\bar{g}_{01})^2c(\bar{g}_{23})^2}
=\Bigl(\frac{1}{1-z}\Bigr)^2,\quad
\frac{c(\bar{g}_{01})^2c(\bar{g}_{23})^2}{c(\bar{g}_{03})^2c(\bar{g}_{12})^2}
=\Bigl(\frac{1-z}{z}\Bigr)^2,
\end{equation*}
which are the formulas of \cite[Lemma~3.15]{Zickert:VCSIR}. Now, our choice of
logarithm branch defines a square root of $c(\bar{g}_{ij})^2$, see
\cite[Remark~3.4]{Zickert:VCSIR}, given by
\begin{equation}\label{eq:Psi=sigma}
\Log c(\bar{g}_{ij})= \frac{1}{2}\Log c(\bar{g}_{ij})^2= \frac{1}{2}\Log
\det(\V_i,\V_j)^2,
\end{equation}
which is the definition of $\dH{\V_i}{\V_j}$ given in \eqref{eq:det2.vi}. 
Using $\Log c(\bar{g}_{ij})$ in \cite[(3.6)]{Zickert:VCSIR} to every generator of 
$\bar{B}(\bar{G},\bar{P})$ associates a flattening giving a homomorphism \cite[Theorem~3.17]{Zickert:VCSIR}
\begin{equation}\label{eq:Z.Psi}
 \Psi\co H_3(\bar{G},\bar{P};\Z)\to\EBg{\C}.
\end{equation}

\begin{proposition}\label{prop:Psi=tildesigma}
The following diagram commutes
\begin{equation*}
\xymatrix{
H_3(\bar{G},\bar{P};\Z)\cong H_3\bigl(\CC{B}^{h_{\bar{P}}^{\bar{B}}\neq}(\Xsubgr{\bar{P}})\bigr)\ar[dr]_{\widehat{\sigma}}\ar[rr]^{\cong}& &
H_3\bigl(\bar{B}_*(\bar{G},\bar{P})\bigr)\cong H_3(\bar{G},\bar{P};\Z)\ar[ld]^{\Psi} \\
&\EBg{\C}&
}
\end{equation*}
where $\Psi$ is the homomorphism \eqref{eq:Z.Psi} given in \cite[Theorem~3.17]{Zickert:VCSIR},
$\widehat{\sigma}$ is the homomorphism given in \eqref{eq:tilde.sigma} and the
horizontal arrow is given by the isomorphism of Corollary~\ref{cor:iso.complex}.
\end{proposition}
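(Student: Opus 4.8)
The plan is to reduce the statement to a check on a single generator and then exploit the fact that everything in sight is a homomorphism. Indeed, $\widetilde{\sigma}$, $\Psi$, and the vertical isomorphism of Corollary~\ref{cor:iso.complex} are all induced by maps that are defined on generators, so it will be enough to show that, for one generator $(\V_0,\V_1,\V_2,\V_3)$ of $\CC[3]{B}^{h_{\bar{P}}^{\bar{B}}\neq}(\Xsubgr{\bar{P}})$ with $\V_i=[v_i^1,v_i^2]$, the two composites $\Psi\circ(\text{iso})$ and $\widetilde{\sigma}$ assign the same flattened ideal simplex in $\EpB{\C}$. Passing to homology then yields the commutative triangle.

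First I would trace the generator through the vertical isomorphism. By Corollary~\ref{cor:iso.complex} it corresponds to the $\bar{G}$--edge labeling of a truncated $3$--simplex whose long edges are the matrices $\bar{g}_{ij}$ of \eqref{eq:el.gij.vi}, so that $c(\bar{g}_{ij})=\pm\det(\V_i,\V_j)$. Combining \eqref{eq:c2=det2} with the definition \eqref{eq:det2.vi} gives the key identity
\begin{equation*}
 \Log c(\bar{g}_{ij})=\tfrac{1}{2}\Log c(\bar{g}_{ij})^2=\tfrac{1}{2}\Log\det(\V_i,\V_j)^2=\dH{\V_i}{\V_j},
\end{equation*}
for the branch of logarithm fixed throughout. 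This is the crux: both maps manufacture a flattening out of precisely these numbers, and the identity is exactly what the preceding subsection was arranged to produce.

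Next I would compare the two flattenings term by term. The homomorphism $\widetilde{\sigma}$ assigns the flattening $(\widetilde{w}_0,\widetilde{w}_1,\widetilde{w}_2)$ of \eqref{eq:flattening}, written purely in terms of the symbols $\dH{\V_i}{\V_j}$. Zickert's homomorphism $\Psi$ of \cite[Theorem~3.17]{Zickert:VCSIR} assigns to the edge labeling the flattening whose log parameters are the analogous signed sums of the quantities $\Log c(\bar{g}_{ij})$, dictated by the edge-and-parameter bookkeeping recalled in Figure~\ref{fig:parameter} together with the cross-ratio identities \eqref{eq:z}, \eqref{eq:1/1-z} and \eqref{eq:1-z/z}. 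Substituting the key identity into Zickert's expression turns it verbatim into \eqref{eq:flattening}, so the two flattenings coincide on the generator, and hence $\Psi\circ(\text{iso})=\widetilde{\sigma}$.

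The only genuine obstacle is the conventions in this last comparison: one must confirm that the particular signed combinations of $\Log c(\bar{g}_{ij})$ entering the definition of $\Psi$, along with Zickert's ordering and sign conventions for the flattening condition, reproduce exactly the combinations in \eqref{eq:flattening}, and that the square-root of $c(\bar{g}_{ij})^2$ implicit in $\Psi$ is the same choice used in \eqref{eq:det2.vi}. I expect this to be routine bookkeeping rather than a substantive difficulty, precisely because the assignments in Figure~\ref{fig:parameter} and the determinant formulas for the cross-ratio parameters were set up to match Zickert's conventions.
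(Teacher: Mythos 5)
Your proposal is correct and follows essentially the same route as the paper: the paper's proof is precisely the observation that, on generators and via the isomorphism of Corollary~\ref{cor:iso.complex}, Zickert's formula defining $\Psi$ coincides with \eqref{eq:flattening}, resting on the identity $\Log c(\bar{g}_{ij})=\frac{1}{2}\Log\det(\V_i,\V_j)^2=\dH{\V_i}{\V_j}$ established in the discussion just before the proposition. Your write-up simply makes explicit the generator-by-generator check and the convention-matching (square-root choice, signed sums) that the paper compresses into a one-line proof after having arranged those conventions in advance.
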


\begin{proof}
The definition of $\Psi$ given by formula \cite[(3.6)]{Zickert:VCSIR}, by \eqref{eq:Psi=sigma} coincides
with the definition of $\widehat{\sigma}$ given by \eqref{eq:flattening} via
the isomorphism of Corollary~\ref{cor:iso.complex}.
\end{proof}

\begin{remark}\label{rem:split}
In \cite[Proposition~14.3]{Neumann:EBGCCSC} Neumann proves that the long exact
sequence \eqref{eq:long.es} gives rise to a split exact sequence
\begin{equation}\label{eq:split.es}
 0\to
H_3(\bar{G};\Z)\xrightarrow{i_*}H_3(\bar{G},\bar{P};\Z)\xrightarrow{\partial}
H_2(\bar{P};\Z)\to0.
\end{equation}
In \cite[Proposition~6.12]{Zickert:VCSIR} Zickert proves that $\Psi$ defines a
splitting of the sequence \eqref{eq:split.es}. This together with
Proposition~\ref{prop:Psi=tildesigma} proves that $\widetilde{\sigma}$ defines a splitting of the
sequence \eqref{eq:split.es}.
\end{remark}

\subsection{Zickert's relative class}\label{ssec:fun.class}

Now we compare the construction of Zicker's relative class in
$H_3(\bar{G},\bar{P};\Z)$ with our computation of the invariant $\beta_P(M)$
given in Subsection~\ref{ssec:beta.P}.

As in Section~\ref{sec:compute}, consider an hyperbolic $3$--manifold of finite
volume $M$ with an ideal triangulation and let $\bar{\rho}\co\hg{M}\to\PSL{\C}$ be the geometric
representation. Let $\widehat{\pi}\co\widehat{\hyp}\to\widehat{M}$ be the extension
of the universal cover of $M$ to its end-compactification.
A \emph{developing map} of $\bar{\rho}$ is a $\bar{\rho}$--equivariant map
\begin{equation*}
D\co\widehat{\hyp}\to\hypc
\end{equation*}
sending the points in $\mathcal{C}$ to $\partial\hypc$. Let
$\widehat{c}\in\widehat{\mathcal{C}}$ and for each lift $c\in\mathcal{C}$ of
$\widehat{c}$, let $H(D(c))$ be an Euclidean horosphere based at $D(c)$. The
collection $\{H(D(c))\}_{c\in\widehat{\pi}^{-1}(\widehat{c})}$ of Euclidean
horospheres is called a \emph{decoration} of $\widehat{c}$ if the following
equivariance condition is satisfied:
\begin{equation*}
H(D(\gamma\cdot c))=\bar{\rho}(\gamma)H(D(c)),\quad\text{for $\gamma\in\hg{M}$,
$c\in\widehat{\pi}^{-1}(\widehat{c})$.}
\end{equation*}
A developing map of $\bar{\rho}$ together with a choice of decoration of each
$\widehat{c}\in\widehat{\mathcal{C}}$ is called a \emph{decoration} of
$\bar{\rho}$.

By \cite[Corollary~5.16]{Zickert:VCSIR} a decoration of $\bar{\rho}$ defines a
class $F(M)$ in $H_3(\bar{G},\bar{P};\Z)$. This can be seen
as follows. The decoration of $\bar{\rho}$
endows each $3$--simplex of $M$ with the shape of a decorated simplex. By
\cite[Theorem~3.13]{Zickert:VCSIR} each congruence class of these decorated
simplices corresponds to a generator of $\bar{B}_3(\bar{G},\bar{P})$ which is a
truncated simplex with a $\bar{G}$--edge labeling. The decoration and the
$\bar{G}$--edge labelings respect the face pairings so this gives a well-defined
cycle $\alpha$ in $\bar{B}_3(\bar{G},\bar{P})$ which represents the class $F(M)$, see \cite[p.~518]{Zickert:VCSIR}
for details. The class $F(M)$ is not well-defined, it depends on the choice of decoration, see \cite[Remark~5.19]{Zickert:VCSIR}.

In Subsection~\ref{ssec:beta.P} the inclusion $\widehat{\hyp}\to\hypc$ is a
developing  map of the geometric representation
$\bar{\rho}\co\hg{M}\to\PSL{\C}$.
Using the bijection between the set of horospheres and $\Xspb$ given in
\eqref{eq:horo.XP} there is a one-to-one correspondence between \emph{decorations} of $\bar{\rho}$ and
the $\dsubgr$-equivariant maps $\Phi$ given in \eqref{eq:Phi}. So $\Phi$ endows each ideal $3$--simplex of $\widehat{\hyp}$
with the shape of a decorated simplex, via the homomorphism
\begin{align*}
C_3(\widehat{\hyp})&\to\CC[3]{C}^{h_{\bar{P}}^{\bar{B}}\neq}(\Xsubgr{\bar{P}})\\
(z_0,z_1,z_2,z_3)&\mapsto (\Phi(z_0),\Phi(z_1),\Phi(z_2),\Phi(z_3)).
\end{align*}
where $C_*(\widehat{\hyp})$ is the simplicial chain complex of $\widehat{\hyp}$ of the ideal triangulation of $\widehat{\hyp}$
given by the lifting of the ideal triangulation of $M$ (see \S\ref{sec:compute}). This homomorphism induces a homomorphism
\begin{align*}
C_3(\widehat{M})=C_3(\widehat{\hyp})_\dsubgr&\to\CC[3]{B}^{h_{\bar{P}}^{\bar{B}}\neq}(\Xsubgr{\bar{P}})\cong\bar{B}_3(\bar{G},\bar{P})\\
(z_0,z_1,z_2,z_3)_\dsubgr&\mapsto (\Phi(z_0),\Phi(z_1),\Phi(z_2),\Phi(z_3))_{\bar{G}},
\end{align*}
which in turn, induces a homomorphism in homology
\begin{equation}
 \widehat{\psi}_\Phi\colon H_3(\widehat{M})\to H_3(\PSL{\C},\bar{P};\Z).
\end{equation}
The image of the fundamental class $[\widehat{M}]$ of $\widehat{M}$ under this homomorphism gives Zickert's class
\begin{align*}
[\widehat{M}]=\Bigl[\sum_{i=1}^n(z_0^i,z_1^i,z_2^i,z_3^i)_\dsubgr\Bigr]&\mapsto
F(M)=\Bigl[\sum_{i=1}^n\bigl(\Phi(z_0^i),\Phi(z_1^i),\Phi(z_2^i),
\Phi(z_3^i)\bigr)_{\bar{G}}\Bigr].
\end{align*}
corresponding to the decoration $\Phi$; it is the class $F_\Phi(M)$ described in Remark~\ref{rem:Fphi}. Thus we proved the following proposition.

\begin{proposition}
 Zickerts class $F(M)$ is the class $F_\Phi(M)$ described in Remark~\ref{rem:Fphi}, and $\Phi$ corresponds to the decoration used to define $F(M).$
\end{proposition}

\begin{corollary}\label{cor:F=betaP}
The image of the class $F(M)$ under the canonical homomorphism $\lambda_3\colon H_3(\bar{G},\bar{P};\Z)\to H_3([\bar{G}:\bar{P}];\Z)$ given in \eqref{eq:Taka.Hoch},
is the invariant $\beta_P(M)$. Hence, it does not depend on the choice of decoration.
\end{corollary}

\begin{proof}
 The class $F_\Phi(M)\in H_3(\PSL{\C},\bar{P};\Z)$ is mapped by the canonical map \eqref{eq:Taka.Hoch} to the invariant $\beta_P(M)$ (Remark~\ref{rem:Fphi}) which is independent of the choice of decoration by Theorem~\ref{thm:beta.P.B} (see also Remark~\ref{rem:Phi.ind}).
\end{proof}

\begin{remark}\label{rem:F.preim.b}
If we vary the decoration $\Phi$ all the corresponding classes $F_\Phi(M)$ are in $\lambda_3^{-1}(\beta_P(M))$.
\end{remark}

\subsection{$PSL$--fundamental class for non-compact $M$}

As we mentioned above, the class $F_\Phi(M)$ is not well-defined, it depends on the choice of decoration $\Phi$, however, Zickert proves that the image  of $F_\Phi(M)$ under the homomorphism \eqref{eq:Z.Psi} is independent of such choice \cite[Theorem~6.10]{Zickert:VCSIR}. 
We shall denote such image by $\fcPSL{M}$ and call it the \emph{$PSL$--fundamental class} (in \cite[Theorem~14.2]{Neumann:EBGCCSC} it is denoted $\widehat{\beta}(M)$).

Recall that to construct Zickert's relative class $F_\Phi(M)$ it is necessary a triangulation of $M$. One can define the $PSL$--fundamental class $\fcPSL{M}$ without a triangulation of $M$ 
using the invariant $\beta_P(M)$. By Remark~\ref{rem:F.preim.b} the preimage $\lambda_3^{-1}(\beta_P(M))$ under the canonical map \eqref{eq:Taka.Hoch} consists of the classes $F_\Phi(M)$ 
varying the decoration $\Phi$, so choosing a preimage $F_\phi(M)\in\lambda_3^{-1}(\beta_P(M))$ we can define the $PSL$--fundamental class by
 \begin{equation}\label{eq:PSL.fc}
 \fcPSL{M}=\widehat{\sigma}(F_\Phi(M))\in \EBg{\C}\cong H_3(\PSL{\C};\Z).
\end{equation}
Now, using a triangulation of $M$ but the construction of the class $F_\Phi(M)$ using the $h_{\bar{P}}^{\bar{B}}$-subcomplex $\CC[n]{C}^{h_{\bar{P}}^{\bar{B}}\neq}(\Xsubgr{\bar{P}})$ to compute $H_3(\PSL{\C},\bar{P};\Z)$ 
as in Remark~\ref{rem:Fphi}, we can write down a explicit formula for $\fcPSL{M}$ applying to the cycle \eqref{eq:F.cycle} the homomorphism \eqref{eq:tilde.sigma} to get
\begin{multline}\label{eq:fun.class}
 \fcPSL{M}=\widehat{\sigma}(F_\Phi(M))\\
=\sum_{i=1}^n{\scriptstyle\Bigl(\widetilde{w}_0\bigl(\Phi(z_0^i),\Phi(z_1^i),
\Phi(z_2^i),\Phi(z_3^i)\bigr),
\widetilde{w}_1\bigl(\Phi(z_0^i),\Phi(z_1^i),\Phi(z_2^i),\Phi(z_3^i)\bigr),
\widetilde{w}_2\bigl(\Phi(z_0^i),\Phi(z_1^i),\Phi(z_2^i),\Phi(z_3^i)\bigr)\Bigr)
}.
\end{multline}
In view of Remark~\ref{rem:efficient}, such formula is very difficult to write using Zickert's definition of the class $F_\Phi(M)$ using the complex of truncated simplices $\bar{C}_*(\bar{G},\bar{P})$.

\begin{remark}\label{rem:T.inv}
Notice that the image of the $PSL$--fundamental class $\fcPSL{M}$ under the
homomorphism $(h_{\bar{I}}^{\bar{T}})_*\co H_3(\PSL{\C};\Z)\to
H_3([\PSL{\C}:\bar{T}];\Z)$ in diagram \eqref{eq:hom.diag} is also an invariant
of $M$ which we can denote by $\beta_T(M)$. This invariant is sent to the
classical Bloch invariant $\beta_B(M)$ by the homomorphism
\begin{equation*}
 H_3([\PSL{\C}:\bar{T}];\Z)\xrightarrow{h_{\bar{T}}^{\bar{B}}} H_3([\PSL{\C}:\bar{B}];\Z)\xrightarrow{\sigma}\pB{\C}.
\end{equation*}
It would be interesting to see which information carries this invariant. 
\end{remark}

\section{$(G,H)$--representations}\label{sec:G.H.rep}

Our construction also works in the general context of
$(G,H)$--representations of tame manifolds considered in \cite{Zickert:VCSIR}.
Here we give the basic definitions and facts, for more detail see
\cite[\S4]{Zickert:VCSIR}.

A \emph{tame} $n$-manifold is an $n$-manifold $M$ diffeomorphic to the interior of a
compact manifold $\overline{M}$. The boundary components $E_i$ of $\overline{M}$
are called the \emph{ends} of $M$. The number of ends can be zero to include
closed manifolds as tame manifolds with no ends.

Let $M$ be a tame $n$-manifold. We have that $\hg{M}\cong\hg{\overline{M}}$ and each
end $E_i$ of $M$ defines a subgroup $\hg{E_i}$ of $\hg{M}$ which is well defined
up to conjugation. These subgroups are called \emph{peripheral subgroups} of
$M$. 

Let $\widehat{M}$ be the compactification of $M$ obtained by identifying each
end of $M$ to a point. 
We call the points in $\widehat{M}$ corresponding to the ends as \emph{ideal
points} of $M$ . 
Let $\widehat{\widetilde{M}}$ be the compactification of the universal cover
$\widetilde{M}$ of $M$ obtained by adding ideal points corresponding to the
lifts of the ideal points of $M$ . The covering map extends to a map from
$\widehat{\widetilde{M}}$ to $\widehat{M}$. We choose a point in $M$ as a base
point of  $\widehat{M}$ and one of its lifts as base point of
$\widehat{\widetilde{M}}$. With the base points fixed, the action of $\hg{M}$ on
$\widetilde{M}$ by covering transformations extends to an action on
$\widehat{\widetilde{M}}$ which is not longer free. The stabilizer of a lift
$\tilde{e}$ of an ideal point $e$ corresponding to an end $E_i$ is isomorphic
to
a peripheral subgroup $\hg{E_i}$. Changing the lift $\tilde{e}$ corresponds to
changing the peripheral subgroup by conjugation.

Let $G$ be a discrete group, let $H$ be any subgroup and consider the family of
subgroups $\fami(H)$ generated by $H$. Let $M$ be a tame manifold, a
representation $\rho\co\hg{M}\to G$ is called a \emph{$(G,H)$--representation}
if the images of the peripheral subgroups under $\rho$ are in $\fami(H)$. 

In the particular case when $G=\PSL{\C}$ and $H=\bar{P}$ a
$(G,H)$--representation $\rho\co\hg{M}\to\PSL{\C}$ is called
\emph{boundary-parabolic}.

The geometric representation of a hyperbolic $3$--manifold is boundary
parabolic. For further examples see Zickert \cite[\S4]{Zickert:VCSIR}.

Let $M$ be a tame $n$--manifold with $d$ ends and let $\rho\co\hg{M}\to G$ be a
$(G,H)$--representation.
Let $\dsubgr$ be the image of $\hg{M}$ in $G$ under $\rho$, also denote by
$\dsubgr_i$ the image of the peripheral subgroup $\hg{E_i}$ under $\rho$ and
consider the family 
$\fami=\fami(\dsubgr_1,\dots,\dsubgr_d)$ of subgroups of $G$. On the other hand,
define $\dsubgr'_i=\rho^{-1}(\dsubgr_i)$ and consider the family
$\fami'=\fami'(\dsubgr'_1,\dots,\dsubgr'_d)$ of
subgroups of $\hg{M}$.

\begin{proposition}
Consider the classifying space $\EGF{\dsubgr}$ as a $\hg{M}$--space defining the
action by
\begin{equation*}
 \gamma\cdot x=\rho(\gamma)\cdot x,\qquad \gamma\in\hg{M},\quad
x\in\EGF{\dsubgr}.
\end{equation*}
Then, with this action $\EGF{\dsubgr}$ is a model for the classifying space
$\EGF[\fami']{\hg{M}}$.
\end{proposition}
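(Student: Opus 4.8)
The plan is to apply the homotopy characterization of Theorem~\ref{thm:hom.ch} directly to $X=\EGF{\dsubgr}$, equipped with the $\hg{M}$--action $\gamma\cdot x=\rho(\gamma)\cdot x$. First I would observe that this pulled-back action makes $X$ into a $\hg{M}$--$CW$--complex: the action factors through the surjection $\rho\co\hg{M}\to\dsubgr$, and both defining conditions of a $G$--$CW$--complex pass from the $\dsubgr$--structure to the $\hg{M}$--structure without change (if $\gamma\cdot\sigma=\sigma$ then $\rho(\gamma)\cdot\sigma=\sigma$, so $\rho(\gamma)$, and hence $\gamma$, fixes $\sigma$ pointwise).

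The whole argument then rests on a dictionary between the two actions. For $x\in X$ the $\hg{M}$--isotropy subgroup is $\rho^{-1}(\dsubgr_x)$, where $\dsubgr_x$ denotes the $\dsubgr$--isotropy subgroup; and for any subgroup $K\leq\hg{M}$ the fixed-point sets coincide as subsets of $X$,
\begin{equation*}
 X^{K}=X^{\rho(K)},
\end{equation*}
since a point is fixed by every $\gamma\in K$ precisely when it is fixed by every element of $\rho(K)$. The key lemma I would prove is the equivalence
\begin{equation*}
 K\in\fami' \quad\Longleftrightarrow\quad \rho(K)\in\fami ,
\end{equation*}
which uses that $\rho$ maps $\hg{M}$ onto $\dsubgr$ and that $\rho(\dsubgr'_i)=\rho(\rho^{-1}(\dsubgr_i))=\dsubgr_i$. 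For the forward direction, $K\subseteq\tilde g\,\dsubgr'_i\,\tilde g^{-1}$ gives $\rho(K)\subseteq\rho(\tilde g)\dsubgr_i\rho(\tilde g)^{-1}\in\fami$. For the converse, given $\rho(K)\subseteq g\dsubgr_i g^{-1}$ with $g\in\dsubgr$, I choose a lift $\tilde g\in\rho^{-1}(g)$ and use the identity $\rho^{-1}(g\dsubgr_i g^{-1})=\tilde g\,\dsubgr'_i\,\tilde g^{-1}$ together with $K\subseteq\rho^{-1}(\rho(K))$ to conclude $K\in\fami'$.

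Granting this dictionary, the three requirements of Theorem~\ref{thm:hom.ch} follow immediately. Every $\hg{M}$--isotropy subgroup $\rho^{-1}(\dsubgr_x)$ lies in $\fami'$, because $\rho(\rho^{-1}(\dsubgr_x))=\dsubgr_x\in\fami$ (as $X=\EGF{\dsubgr}$ has isotropy in $\fami$) and the equivalence applies. For $K\in\fami'$ we have $\rho(K)\in\fami$, so $X^{K}=X^{\rho(K)}$ is weakly contractible since $X$ is a model for the family $\fami$ over $\dsubgr$; and for $K\notin\fami'$ we have $\rho(K)\notin\fami$, so $X^{K}=X^{\rho(K)}=\emptyset$ for the same reason. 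By Theorem~\ref{thm:hom.ch}, $X$ is therefore a model for $\EGF[\fami']{\hg{M}}$.

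The only delicate point I anticipate is the bookkeeping around the surjectivity of $\rho$: the conjugations witnessing membership in $\fami$ take place inside $\dsubgr$, and one must lift the conjugating element $g\in\dsubgr$ to some $\tilde g\in\hg{M}$ in order to transport them to conjugations inside $\hg{M}$ witnessing membership in $\fami'$. Once the identity $\rho^{-1}(g\dsubgr_i g^{-1})=\tilde g\,\dsubgr'_i\,\tilde g^{-1}$ is recorded, the rest is formal.
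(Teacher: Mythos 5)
Your proof is correct, but it takes a genuinely different route from the paper's. The paper works with the simplicial model of Proposition~\ref{prop:const3}: since $\ker\rho\subseteq\dsubgr'_i=\rho^{-1}(\dsubgr_i)$, the coset identification $\hg{M}/\dsubgr'_i\cong\bigl(\hg{M}/\ker\rho\bigr)\bigm/\bigl(\dsubgr'_i/\ker\rho\bigr)\cong\dsubgr/\dsubgr_i$ gives an $\hg{M}$--isomorphism $\Delta_{\fami'}\cong\Delta_\fami$ of the generating sets, so the simplicial construction applied to $\Delta_{\fami'}$ yields \emph{literally the same complex} as the simplicial model for $\EGF{\dsubgr}$. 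You instead verify the homotopy characterization of Theorem~\ref{thm:hom.ch} directly, via the dictionary $X^K=X^{\rho(K)}$ together with the equivalence $K\in\fami'\Leftrightarrow\rho(K)\in\fami$, whose converse direction rests on the lifting identity $\rho^{-1}(g\dsubgr_i g^{-1})=\tilde g\,\dsubgr'_i\,\tilde g^{-1}$ --- valid exactly because $\ker\rho\subseteq\dsubgr'_i$, the same containment the paper exploits in its coset computation. All the steps you record check out, including the point (left implicit in the paper) that the pulled-back action makes $X$ an $\hg{M}$--$CW$--complex, and the observation that $\rho\bigl(\rho^{-1}(\dsubgr_x)\bigr)=\dsubgr_x$ by surjectivity of $\rho$ onto $\dsubgr$. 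Your argument buys model-independence: it shows that \emph{any} model of $\EGF{\dsubgr}$, with the action pulled back along $\rho$, is a model for $\EGF[\fami']{\hg{M}}$, whereas the paper's proof nominally concerns the simplicial model (it does extend to arbitrary models, since any $\dsubgr$--map or $\dsubgr$--homotopy is automatically an $\hg{M}$--map or $\hg{M}$--homotopy for pulled-back actions, but this is not said). Your route also makes visible why one must take $\dsubgr'_i=\rho^{-1}(\dsubgr_i)$ rather than the peripheral subgroups $\hg{E_i}$ themselves when $\rho$ is not injective. What the paper's route buys in exchange is the explicit $\hg{M}$--isomorphism $\Delta_{\fami'}\cong\Delta_\fami$ and hence a concrete simplicial model, which is what gets used downstream, e.g.\ in constructing the map $\psi_{\fami'}$ of \eqref{eq:psi.famip} and in the explicit computations in the style of Section~\ref{sec:compute}.
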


\begin{proof}
Consider the $\dsubgr$--set $\Delta_\fami$ defined in
Subsection~\ref{ssec:Adamson}. It is enough to see that $\Delta_\fami$ seen as
a $\hg{M}$--set using $\rho$ is $\hg{M}$--isomorphic to  $\Delta_{\fami'}$. By
the definition of $\dsubgr'$ and $\dsubgr'_i$ we have that
$\dsubgr'/\ker\rho\cong\dsubgr$ and $\dsubgr'_i/\ker\rho\cong\dsubgr_i$.
Then
\begin{equation*}
 \dsubgr'/\dsubgr'_i\cong (\dsubgr'/\ker\rho)\bigm/ (\dsubgr'_i/\ker\rho)\cong
\dsubgr/\dsubgr_i.
\end{equation*}
Therefore
\begin{equation*}
\Delta_{\fami'}=\coprod_{i=1}^d \dsubgr'/\dsubgr'_i \cong \coprod_{i=1}^d
\dsubgr/\dsubgr_i=\Delta_\fami.
\end{equation*}
So now we can use $\Delta_{\fami'}$ in the simplicial construction of
$\EGF[\fami']{\hg{M}}$ and we obtain precisely $\EGF{\dsubgr}$.
\end{proof}

Since the action of $\hg{M}$ on $\widehat{\widetilde{M}}$ has as isotropy
subgroups the peripheral subgroups $\hg{E_i}$ and $\hg{E_i}\in\fami'$ there is a
$\hg{M}$--map unique up to $\hg{M}$--homotopy
\begin{equation}\label{eq:psi.famip}
 \psi_{\fami'}\co \widehat{\widetilde{M}}\to \EGF[\fami']{\hg{M}}\cong
\EGF{\dsubgr}.
\end{equation}
Now consider the classifying space $\EGF[\fami(H)]{G}$, by
Proposition~\ref{prop:res}, restricting the action of $G$ to $\dsubgr$ we have
that
$\mathrm{res}^G_\dsubgr\EGF[\fami(H)]{G}\cong\EGF[\fami(H)/\dsubgr]{\dsubgr}$. 
On the other hand, we have that $\fami\subset\fami(H)/\dsubgr$, so we have a
$\dsubgr$--map unique up to $\dsubgr$--homotopy
\begin{equation}\label{eq:psi.fami}
 \psi_{\fami}\co \EGF{\dsubgr}\to \EGF[\fami(H)]{G}.
\end{equation}
Composing \eqref{eq:psi.famip} with \eqref{eq:psi.fami} we obtain a
$\rho$--equivariant map unique up to $\rho$--homotopy
\begin{equation}\label{eq:psi.rho}
 \psi_\rho\co \widehat{\widetilde{M}}\to \EGF[\fami(H)]{G}.
\end{equation}
Taking the quotients by the actions of $\hg{M}$ and $G$ we get a map unique up
to homotopy given by the composition
\begin{equation*}
\widehat{\psi}_\rho\co\widehat{M}\to \EGF[\fami(H)]{G}/\dsubgr\to
\BGF[\fami(H)]{G}.
\end{equation*}
By a computation analogous to \eqref{eq:hM.fc} $\widehat{M}$ has a fundamental class $[\widehat{M}]$ in $H_n(\widehat{M};\Z)$.
Denote by $\beta_H(\rho)$ the image of the fundamental class $[\widehat{M}]$
of $\widehat{M}$ under the map induced in homology by $\widehat{\psi}_\rho$ 
\begin{equation}\label{eq:psi.rho.homo}
 \begin{aligned}
 H_n(\widehat{M};\Z)&\xrightarrow{(\widehat{\psi}_\rho)_*}
H_n(\BGF[\fami(H)]{G};\Z)\\
[\widehat{M}] &\mapsto \beta_H(\rho).
\end{aligned}
\end{equation}
Thus, by Proposition~\ref{prop:CS.HRGH} we have:

\begin{theorem}
Given an oriented tame $n$--manifold with a $(G,H)$--representation
$\rho\co\hg{M}\to G$ we have a well-defined invariant
\begin{equation*}
 \beta_H(\rho)\in H_n([G:H];\Z).
\end{equation*}
\end{theorem}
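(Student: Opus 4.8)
The plan is to show that the element $\beta_H(\rho)$ defined in \eqref{eq:psi.rho.homo} does not depend on any of the auxiliary choices entering its construction and that it naturally lands in $H_n([G:H];\Z)$. Since the relevant equivariant maps have already been built above, the argument is short and divides into three steps: producing the fundamental class $[\widehat{M}]$, checking that $(\widehat{\psi}_\rho)_*$ is independent of choices, and identifying the target group.

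First I would produce the fundamental class. Because $M$ is an oriented tame $n$--manifold it is the interior of a compact oriented manifold-with-boundary $\overline{M}$, which carries a relative fundamental class generating $H_n(\overline{M},\partial\overline{M};\Z)\cong\Z$. Let $\widehat{\mathcal{C}}\subset\widehat{M}$ denote the \emph{finite} discrete set of ideal points. Exactly as in Subsection~\ref{ssec:fc}, the long exact sequence of the pair $(\widehat{M},\widehat{\mathcal{C}})$ together with $H_k(\widehat{\mathcal{C}};\Z)=0$ for $k\geq1$ gives $H_n(\widehat{M};\Z)\cong H_n(\widehat{M},\widehat{\mathcal{C}};\Z)$ for $n\geq2$, and collapsing all ideal points identifies this with $H_n(\widehat{M}/\widehat{\mathcal{C}};\Z)\cong H_n(\overline{M}/\partial\overline{M};\Z)\cong H_n(\overline{M},\partial\overline{M};\Z)\cong\Z$. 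This furnishes the canonical generator $[\widehat{M}]$.

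Next I would argue well-definedness. The two equivariant maps $\psi_{\fami'}$ of \eqref{eq:psi.famip} and $\psi_{\fami}$ of \eqref{eq:psi.fami} are each unique up to equivariant homotopy, this being precisely the terminal (universal) property of a classifying space for a family of subgroups recalled in Section~\ref{sec:class.sp}. Hence their composite $\psi_\rho$ of \eqref{eq:psi.rho} is unique up to $\rho$--homotopy, and after passing to the quotients by $\hg{M}$ and $G$ the induced map $\widehat{\psi}_\rho\co\widehat{M}\to\BGF[\fami(H)]{G}$ is unique up to ordinary homotopy. Since homotopic maps induce equal homomorphisms on singular homology, the map $(\widehat{\psi}_\rho)_*$, and hence $\beta_H(\rho)=(\widehat{\psi}_\rho)_*([\widehat{M}])\in H_n(\BGF[\fami(H)]{G};\Z)$, is independent of the chosen representatives.

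Finally I would identify the target group by applying Proposition~\ref{prop:CS.HRGH} to the subgroup $H$, giving the canonical isomorphism $H_n(\BGF[\fami(H)]{G};\Z)\cong H_n([G:H];\Z)$ under which $\beta_H(\rho)$ becomes the asserted invariant. The conceptual content is entirely contained in the construction preceding the statement, so I do not expect a serious obstacle; the only point demanding genuine care is the first step, namely verifying that the chain of isomorphisms producing $[\widehat{M}]$ goes through in the general tame setting. This it does for $n\geq2$, precisely because $\widehat{\mathcal{C}}$ is a finite set and therefore has no positive-dimensional homology to obstruct the exact-sequence argument.
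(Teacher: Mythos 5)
Your proof is correct and takes essentially the same route as the paper: the well-definedness comes from the uniqueness up to equivariant homotopy of the maps \eqref{eq:psi.famip} and \eqref{eq:psi.fami}, whose composite descends to $\widehat{\psi}_\rho$ unique up to homotopy, and the target is identified via Proposition~\ref{prop:CS.HRGH}, exactly as in the construction preceding the theorem. Your explicit check that $H_n(\widehat{M};\Z)\cong H_n(\overline{M},\partial\overline{M};\Z)\cong\Z$ via the pair $(\widehat{M},\widehat{\mathcal{C}})$ is a detail the paper leaves implicit (by analogy with Subsection~\ref{ssec:fc}), and it goes through as you state since $\widehat{\mathcal{C}}$ is finite.
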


As before, one can compute the class $\beta_H(\rho)$ using a triangulation of
$M$.
A \emph{triangulation} of a tame manifold $M$ is an identification of
$\widehat{M}$ with a complex obtained by gluing together simplices with
simplicial attaching maps. A triangulation of $M$ always exists and it lifts
uniquely to a triangulation of $\widehat{\widetilde{M}}$.

\subsection{Zickert's clases for $(G,H)$-representations}

Let $M$ be a tame $n$--manifold with $d$ ends and let $\rho\co\hg{M}\to G$ be a
$(G,H)$--representation. In \cite[\S5.2]{Zickert:VCSIR}, given a triangulation
of $M$ it is constructed a $(G,H)$--cocycle, see  \cite[\S5.2]{Zickert:VCSIR}
for the definition, which defines a class $F(\rho)$ in
$H_n(G,H;\Z)$ (see \cite[Theorem~5.13]{Zickert:VCSIR}).
The construction of the $(G,H)$--cocycle depends on a \emph{decoration} of
$\rho$ by \emph{conjugation elements}. Such decorations are given as follows:
for each ideal point $e_i\in\widehat{M}$ choose a lifting
$\tilde{e_i}\in\widehat{\widetilde{M}}$ and assign to this lifting  an element
$g_i(\tilde{e}_i)\in G$, or rather an $H$--coset $g_i(\tilde{e}_i)H$, then
extend $\rho$--equivariantly by
\begin{equation*}
 g_i(\gamma\cdot\tilde{e}_i)=\rho(\gamma)g_i(\tilde{e}_i),\qquad
\gamma\in\hg{M}.
\end{equation*}
Let $\mathcal{I}$ denote the set of ideal point in $\widehat{\widetilde{M}}$.
Notice that a decoration by conjugation elements is equivalent to give a
$\rho$--equivariant map
\begin{equation*}
 \Phi_\rho\co\mathcal{I}\to G/H.
\end{equation*}
The map $\Phi_\rho$ defines explicitly the $\rho$--map \eqref{eq:psi.rho} and
using the triangulation of $M$ gives also explicitly the homomorphism
\eqref{eq:psi.rho.homo} as in Subsection~\ref{ssec:beta.P}.

\begin{remark}\label{rem:F.b.GH}
Given a group $G$ and a subgroup $H$, in general $H_n(G,H;\Z)$ does not coincides
with $H_n([G:H];\Z)$, see Subsection~\ref{ssec:Taka.Hoch}.
The construction of the $(G,H)$--cocycle depends on the choice
of decoration of $\rho$ by conjugation elements, so in principle, choosing
different decorations one can obtain different classes in
$H_n(G,H;\Z)$, in that case, all this classes are mapped to
$\beta_H(\rho)\in H_n([G:H];\Z)$ under the canonical homomorphism
\eqref{eq:Taka.Hoch} since $\beta_H(\rho)$ does not depend on the choice of
decoration because the $\rho$--map \eqref{eq:psi.rho} given by the decoration is
unique up to $\rho$--homotopy. So in general it is more appropriate
to use Adamson relative group homology than Takasu relative group homology
because we obtain invariants independent of choice.
\end{remark}

\subsection{Boundary-parabolic representations}\label{ssec:boun-par}

In the case of boundary-parabolic representations of tame $3$--manifolds we can
use a developing map with a decoration to compute $\beta_P(\rho)$. Let $M$ be a
tame $3$--manifold and let $\bar{\rho}$ be a boundary-parabolic representation.
A \emph{developing map} of $\rho$ is a $\rho$--equivariant map
$D_{\rho}\co \widehat{\widetilde{M}}\to \hypc$ 
sending the ideal points of $\widehat{\widetilde{M}}$ to
$\partial\hypc=\widehat{\C}$. Taking a sufficiently fine triangulation of $M$ it
is always possible to construct a developing map of $\rho$
\cite[Theorem~4.11]{Zickert:VCSIR}. Let $\mathcal{C}$ be the image under
$D_\rho$ of the set of ideal points $\mathcal{I}$ of $\widehat{\widetilde{M}}$.
A \emph{decoration} of $\rho$ is a $\rho$--equivariant map
$\Phi_\rho\co\mathcal{C}\to\Xsubgr{\bar{P}}$ which can be obtained assigning a
Euclidean horosphere to each element of $\mathcal{C}$ as in
Subsection~\ref{ssec:fun.class} or as in Remark~\ref{rem:ci.vi.Ai}. Again, the
decoration defines explicitly the $\rho$--map \eqref{eq:psi.rho} which gives
explicitly the homomorphism \eqref{eq:psi.rho.homo} with $G=\PSL{\C}$ and
$H=\bar{P}$.

As in the case of the geometric representation Zickert defines the
$PSL$--fundamen\-tal class $\fcPSL{\rho}\in H_3(\PSL{\C};\Z)$ of $\rho$ as the image of the class $F(\rho)$
under the homomorphism \eqref{eq:tilde.sigma}, since by \cite[Theorem~6.10]{Zickert:VCSIR} the image is independent of 
the choice of decoration. As in \eqref{eq:PSL.fc} one can define the class $\fcPSL{\rho}$ without using a triangulation of
$M$, by Remark~\ref{rem:F.b.GH} we can choose a preimage $F(\rho)\in\lambda_3^{-1}(\beta_P(\rho))$ and take
\begin{equation*}
\fcPSL{\rho}=\widehat{\sigma}(F(\rho))\in \EBg{\C}\cong H_3(\PSL{\C};\Z).
\end{equation*}
Using the construction of the class $F(\rho)$ using the $h_{\bar{P}}^{\bar{B}}$-subcomplex $\CC[n]{C}^{h_{\bar{P}}^{\bar{B}}\neq}(\Xsubgr{\bar{P}})$ to compute $H_3(\PSL{\C},\bar{P};\Z)$ 
as in Remark~\ref{rem:Fphi}, we can write down a explicit formula for $\fcPSL{\rho}$ applying to $F(\rho)$ homomorphism \eqref{eq:tilde.sigma}.

The image of $\beta_P(\rho)$ under $(h_{\bar{P}}^{\bar{B}})_*\co
H_3([\PSL{\C}:\bar{P};\Z)\to H_3([\PSL{\C}:\bar{B}];\Z)$ gives an
invariant $\beta_B(\rho)$ which can be computed using a developing map as in
Subsection~\ref{ssec:bloch.inv}.

Also as in Remarks~\ref{rem:T.inv} the image of the $PSL$--fundamental class
$\fcPSL{\rho}$ under the homomorphism $(h_{\bar{I}}^{\bar{T}})_*\co
H_3(\PSL{\C};\Z)\to H_3([\PSL{\C}:\bar{T}];\Z)$ in diagram \eqref{eq:hom.diag}
is also an invariant $\beta_T(\rho)$ of $\rho$.

\section{Complex volume}\label{sec:volume}

Recall that Rogers dilogarithm is given by
\begin{equation*}
 L(z)=-\int_0^z\frac{\Log(1-t)}{t}dt + \frac{1}{2}\Log(z)\Log(1-z).
\end{equation*}
In \cite[Proposition~2.5]{Neumann:EBGCCSC} Neumann defines the homomorphism
\begin{gather}
\widehat{L}\co\EpB{\C}\to\C/\pi^2\Z,\notag\\
[z;p,q]\mapsto L(z)+\frac{\pi i}{2}\bigl(q\Log
z+p\Log(1-z)\bigr)-\frac{\pi^2}{6}\label{eq:hatL}
\end{gather}
where $[z;p,q]$ denotes elements in the extended pre-Bloch group using our
choice of logarithm
branch, see Remark~\ref{rem:zpq}, but \eqref{eq:hatL} is actually independent of
this choice, see \cite[Remark~1.9]{Zickert:VCSIR}. In \cite[Theorem~2.6 or
Theorem~12.1]{Neumann:EBGCCSC} Neumann proves that
under the isomorphism $H_3(\PSL{\C};\Z)\cong\EBg{\C}$ the homomorphism
$\widehat{L}$ corresponds to the Cheeger--Chern--Simons class.

For the geometric
representation of a complete oriented hyperbolic $3$--manifold of finite volume
$M$ we have \cite[Corollary~14.6]{Neumann:EBGCCSC}
\begin{equation}\label{eq:comp.vol}
 \widehat{L}\bigl(\fcPSL{M}\bigr)=i\bigl(\Vol(M)+i\CS(M)\bigr)\in\C/i\pi^2\Z,
\end{equation}
where $\Vol(M)$ is the volume of $M$ and
$\CS(M)=2\pi^2\mathrm{cs}(M)\in\R/\pi^2\Z$ with $\mathrm{cs}(M)$ the
Chern--Simons invariant of $M$. Usually $\Vol(M)+i\CS(M)$ is called the
\emph{complex volume} of $M$, see Neumann \cite{Neumann:EBGCCSC} and Zickert
\cite{Zickert:VCSIR} for details. So, using formula \eqref{eq:fun.class} for $\fcPSL{M}$ in \eqref{eq:comp.vol} we get a formula to
compute the volume and Chern--Simons invariant of a complete oriented hyperbolic $3$-manifold of finite volume which by Remark~\ref{rem:efficient} 
it is more efficient than Zickert's formula in \cite{Zickert:VCSIR}.

For the case of a boundary-parabolic representation $\rho\co\hg{M}\to\PSL{\C}$
of a tame $3$--manifold $M$, Zickert \cite[\S6]{Zickert:VCSIR}
defined the \emph{complex volume} of the representation $\rho$ by
\begin{equation*}
 i\bigl(\Vol(\rho)+i\CS(\rho)\bigr)=\widehat{L}\bigl(\fcPSL{\rho}\bigr).
\end{equation*}
Applying $\widehat{L}$ to a formula for $\fcPSL{\rho}$ analogous to
\eqref{eq:fun.class}, but using the decoration
$\Phi_\rho\co\mathcal{C}\to\Xsubgr{\bar{P}}$ of $\rho$ of
Subsection~\ref{ssec:boun-par}, we obtain an explicit efficient formula for the complex volume of $\rho$.

\subsection*{Question}

The main disadvantage of invariant $\beta_P(M)\in H_3([\PSL{\C}:\bar{P}];\Z)$ is that, so far, we cannot get directly from it
the volume and the Chern-Simons invariant of $M$, so one can ask if there exists a homomorphism
\begin{equation*}
 H_3([\PSL{\C}:\bar{P}];\Z)\to H_3(\PSL{\C};\Z)\cong\EBg{\C},
\end{equation*}
such that the image of $\beta_P(M)$ is $\fcPSL{M}$.

\subsection*{Acknowledgments}

Both authors would like to thank Ramadas Ramakrishnan for many fruitful
discussions and suggestions about this work and for his hospitality during the
authors visits to the Abdus Salam International Centre for Theoretical Physics,
Trieste, Italy. Both authors also thank Francisco Gonzales Acuña for
his interest in this work and many illustrative talks.

The second author thanks Walter Neumann for many useful discussions and for his
kind hospitality during his visit to Columbia University as part of his
sabbatical year in 2012. He also thanks Christian Zickert for the discussions about his
work, his interest in this work and for pointing out to me a mistake in a previous version of this article. He also would like to thank 
Wolfgang L\"uck and Haydée Aguilar for the interesting conversations related to
this work.

The first author was partially supported by CONACYT-``Becas-Mixtas en el extranjero
para becarios CONACyT nacionales 2011-2012'', Mexico. The second author is a
Regular Associate of the Abdus Salam ICTP. He was partially supported by CONACYT
and UNAM-DGAPA-PASPA, Mexico during his sabbatical visits and he is currently supported by
project CONACYT 253506.

\end{document}